\documentclass[10pt,a4paper]{article}
\usepackage[latin9]{inputenc}
\setlength{\parskip}{\smallskipamount}
\setlength{\parindent}{0pt}
\usepackage{mathtools}
\usepackage{amsmath}
\usepackage{amsthm}
\usepackage{amssymb}
\usepackage{esint}

\makeatletter

\pdfpageheight\paperheight
\pdfpagewidth\paperwidth

 \theoremstyle{definition}
 \newtheorem*{defn*}{\protect\definitionname}
\theoremstyle{plain}
\newtheorem{thm}{\protect\theoremname}[section]
  \theoremstyle{plain}
  \newtheorem{prop}[thm]{\protect\propositionname}
  \theoremstyle{definition}
  \newtheorem{example}[thm]{\protect\examplename}
  \theoremstyle{plain}
  \newtheorem{lem}[thm]{\protect\lemmaname}
  \theoremstyle{plain}
  \newtheorem{cor}[thm]{\protect\corollaryname}
  \theoremstyle{remark}
  \newtheorem{rem}[thm]{\protect\remarkname}

\usepackage{amsthm}\usepackage{multirow}\usepackage{graphicx}\usepackage{ifpdf}
\usepackage{enumerate}
\usepackage{url}

\setlength{\topmargin}{-0.1 in}
\setlength{\textwidth}{16cm} \setlength{\textheight}{21cm}
\setlength{\oddsidemargin}{-0.2cm}
\setlength{\evensidemargin}{-0.2cm}

\numberwithin{equation}{section}
\usepackage{enumitem}
\setlist[enumerate]{label*=(\alph*),ref=(\alph*)}

\usepackage{authblk}

\allowdisplaybreaks

\title{A Hilbert space approach to\\ fractional differential equations}

\author[1]{Kai Diethelm}
\author[2]{Konrad Kitzing}
\author[2]{Rainer Picard}
\author[2]{Stefan Siegmund}
\author[3]{Sascha Trostorff}
\author[4]{Marcus Waurick}
\affil[1]{Fakultät Angewandte Natur- und Geisteswissenschaften, Hochschule für angewandte Wissenschaften Würzburg-Schweinfurt, Germany}
\affil[2]{Institute of Analysis, Faculty of Mathematics, TU Dresden, Germany}
\affil[3]{Mathematisches Seminar, Christian-Albrechts-Universität zu Kiel, Germany}
\affil[4]{Department of Mathematics and Statistics, University of Strathclyde, Glasgow,
Scotland}
\date{}                     
\setcounter{Maxaffil}{0}

\newcommand{\C}{\mathbb{C}}
\newcommand{\R}{\mathbb{R}}
\newcommand{\N}{\mathbb{N}}

\DeclareMathOperator{\dom}{dom}

\newcommand{\hide}[1]{}





\newcommand*{\e}{\mathrm{e}}
\renewcommand*{\i}{\mathrm{i}}




\DeclareMathAccent{\Circ}{\mathalpha}{operators}{"17}

\renewcommand{\Re}{\operatorname{\mathrm{Re}}}


\newcommand{\ii}{\mathrm {i}}
\newcommand{\spt}{\operatorname{spt}}

\newcommand{\norm}[1]{\left\lVert#1\right\rVert}
\newcommand{\abs}[1]{\left\lvert#1\right\rvert}

\newcommand{\cC}{C_{\mathrm{c}}}
\renewcommand{\d}{\; \mathrm{d}}

\renewcommand{\tilde}{\widetilde}
\renewcommand*{\epsilon}{\varepsilon}
\renewcommand*{\theta}{\vartheta}
\renewcommand*{\rho}{\varrho}

\makeatother

  \providecommand{\corollaryname}{Corollary}
  \providecommand{\definitionname}{Definition}
  \providecommand{\examplename}{Example}
  \providecommand{\lemmaname}{Lemma}
  \providecommand{\propositionname}{Proposition}
  \providecommand{\remarkname}{Remark}
\providecommand{\theoremname}{Theorem}

\usepackage{xcolor}

\DeclareMathOperator*{\esssup}{ess\thinspace sup}

\begin{document}
\maketitle

\begin{abstract}
We study fractional differential equations of Riemann-Liouville and Caputo type in Hilbert spaces. Using exponentially weighted spaces of functions defined on $\mathbb{R}$, we define fractional operators by means of a functional calculus using the Fourier transform. Main tools are extrapolation- and interpolation spaces. Main results are the existence and uniqueness of solutions and the causality of solution operators for non-linear fractional differential equations.
\end{abstract}

\paragraph{Keywords:} fractional differential equations; Caputo derivative, Riemann-Liouville derivative; causality;

\paragraph{MSC 2010:}
26A33 Fractional derivatives and integrals;
45D05 Volterra integral equations

\section{Introduction}

The concept of a fractional derivative $\partial_{0}^{\alpha}$,
$\alpha \in\, ]0,1]$, which we utilize, will be based on inverting a suitable
continuous extension of the Riemann-Liouville fractional integral
of continuous functions $f\in \cC(\R)$ with compact support given by
\[
t\mapsto\tfrac{1}{\sqrt{2\pi}}\int_{-\infty}^{t}\tfrac{1}{\Gamma(\alpha)}(t-s)^{\alpha-1}f(s)\d s
\]
as an apparently natural interpolation suggested by the iterated kernel
formula for repeated integration. The choice of the lower limit as
$-\infty$ is determined by our wish to study dynamical processes,
for which causality\footnote{\label{fn:Riemann-Liouville-a }Other frequent choices such as
\[
t\mapsto\tfrac{1}{\sqrt{2\pi}}\chi_{_{]a,\infty[}}\left(t\right)\int_{a}^{t}\tfrac{1}{\Gamma(\alpha)}(t-s)^{\alpha-1}f(s) \d s
\]
for $a\in\mathbb{R}$, would lose time-shift invariance (a suggestive
choice is $a=0$), which we consider undesirable. For our choice of
the limit case $a=-\infty$ it should be noted that the Riemann-Liouville
and the Caputo fractional derivative essentially coincide.} should play an important role.
It is a pleasant fact that the classical definition of $\partial_0^\alpha$ in the sense of \cite{Diethelm2003}
coincides with the other natural choice of $\partial_{0}^{\alpha}$
as a function of $\partial_{0}$ in the sense of a spectral function
calculus of a realization of $\partial_{0}$ as a normal operator
in a suitable Hilbert space setting.
This is specified below.
The Hilbert space framework is based on observations in \cite{0753.44002} and has already been exploited for linear fractional partial differential equations in \cite{picard2015evolutionary}.
In this paper, however, we study fractional differentiation and different notions of nonlinear fractional differential equations, using extrapolated fractional Sobolev spaces.
The approach taken in this paper contrasts with other approaches in fractional calculus.
Indeed, in \cite{Zacher2009} a fractional derivative is defined as a derivative of a fractional integral,
in \cite{Gorenflo_und_Co2015} the fractional derivative of $\C$\textendash valued functions on a \emph{bounded} interval and linear fractional differential equations are also studied with a functional calculus and fractional Sobolev spaces.
Here, using the above mentioned functional calculus of the derivative operator, we obtain a causal implementation of the fractional derivative.
The property of causality is not shared by the fractional derivative operator constructed in \cite{Gorenflo_und_Co2015} (cf.\ \cite[Formula (2.3)]{Gorenflo_und_Co2015}).

\section{Fractional derivative in a Hilbert space setting}\label{sec:rho_calc}

In the present section, we introduce the necessary operators to be
used in the following. We will formulate all results in the vector-valued,
more specifically, in the Hilbert space-valued situation.
On a first read, one may think of scalar-valued functions.

To begin with, we introduce an $L^{2}$-variant of the exponentially
weighted space of continuous functions that proved useful in the proof
of the Picard\textendash Lindel{\"o}f Theorem and is attributed to
Morgenstern, \cite{Ref195229}.

We denote by $L^p(\R; H)$ and $L_\mathrm{loc}^1(\R;H)$ the space of $p$-Bochner integrable functions and the space of locally Bochner integrable functions
on a Hilbert space $H$, respectively.
\begin{defn*}
Let $H$ be a Hilbert space, $\rho \in \R$ and $p \in [1, \infty]$.
For $f\in L_\mathrm{loc}^1(\R;H)$ we denote $\e^{-\rho m}f \coloneqq (\R\ni t\mapsto \e^{-\rho t}f(t))$.
We define the normed spaces
\[
L_\rho^p(\R; H) \coloneqq \left\{f\in L_\mathrm{loc}^1(\R;H); \e^{-\rho m} f \in L^p(\R; H)\right\},
\]
with norm
\begin{align*}
        \norm{f}_{L_\rho^p(\R; H)} &\coloneqq \norm{\e^{-\rho m} f}_{L^p(\R; H)} = \left(\int_\R \norm{f(t)}_H^p \e^{-p \rho t} \d t \right)^{1/p} &&(p<\infty),
        \\
        \norm{f}_{L_\rho^p(\R;H)} &\coloneqq \norm{\e^{-\rho m} f}_{L^p(\R;H)} = \esssup \norm{\e^{-\rho m}f}_H &&(p=\infty).
\end{align*}
\end{defn*}

\begin{rem}
The operator $\e^{-\rho m}:L_\rho^p(\R;H)\to L^p(\R;H), f\mapsto \e^{-\rho m}f$ is an isometric isomorphism from $L_\rho^p(\R;H)$ to $L^p(\R;H)$.
Moreover $L_\rho^2(\R; H)$ is a Hilbert space with scalar product
\begin{equation*}
        (f,g)\mapsto\left< f,g\right>_{L_\rho^2(\R; H)} = \int_{\mathbb{R}}\langle f(t),g(t)\rangle_{H}\e^{-2\rho t} \d t.
\end{equation*}
\end{rem}

Next, we introduce the time derivative.

\begin{defn*}
Let $H$ be a Hilbert space.
\begin{enumerate}
\item Let $f,g\in L_{\textnormal{loc}}^{1}(\mathbb{R};H)$. We say that
$f'=g$, if for all $\phi\in C_{c}^{\infty}(\mathbb{R})$
\[
-\int_{\mathbb{R}}f\phi'=\int_{\mathbb{R}}g\phi.
\]
\item Let $\rho\in\mathbb{R}$. We define
\begin{align*}
\partial_{0,\rho} \colon H_{\rho}^1(\mathbb{R};H)\subseteq L_{\rho}^{2}(\mathbb{R};H) &\to L_{\rho}^{2}(\mathbb{R};H)
\\
f &\mapsto f',
\end{align*}
where $H_{\rho}^1(\mathbb{R};H)\coloneqq\{f\in L_{\rho}^{2}(\mathbb{R};H);f'\in L_{\rho}^{2}(\mathbb{R};H)\}$.
\end{enumerate}
\end{defn*}

The index 0 in $\partial_{0,\rho}$ shall indicate that the derivative is with respect to time.
We will introduce the fractional derivatives and fractional integrals
by means of a functional calculus for $\partial_{0,\rho}$. For this, we
introduce the Fourier\textendash Laplace transform.

\begin{defn*}
Let $H$ be a complex Hilbert space.
Let $\rho\in\mathbb{R}$.
\begin{enumerate}
\item We define the Fourier transform of $f\in L^1(\R;H)$ by
\begin{equation*}
        \mathcal F f(\xi) = \frac{1}{\sqrt{2\pi}}\int_{\mathbb{R}}f(t)e^{-\ii\xi t}\,\mathrm{d}t,\qquad \xi\in\R.
\end{equation*}
\item We define the Fourier\textendash Laplace transform on $L_\rho^1(\R;H)$ by $\mathcal{L}_{\rho}\coloneqq\mathcal{F}\e^{-\rho m}$.
\item We define the Fourier transform on $L^2(\R; H)$ denoted $\mathcal{F}\colon L^{2}(\mathbb{R};H)\to L^{2}(\mathbb{R};H)$
        to be the unitary extension of the operator $\mathcal F: L^{1}(\mathbb{R};H)\cap L^{2}(\mathbb{R};H) \to L^{2}(\mathbb{R};H)$.
\item We define the Fourier\textendash Laplace transform on $L_\rho^2(\R; H)$ as the unitary mapping
        $\mathcal{L}_{\rho}\coloneqq\mathcal{F}\e^{-\rho m}:L_\rho^2(\R;H)\to L^2(\R;H)$
\end{enumerate}
\end{defn*}

From now on, $H$ denotes a complex Hilbert space.
With the latter notion at hand, we provide the spectral representation
of $\partial_{0,\rho}$ as the multiplication-by-argument operator
\begin{align*}
        &\dom(m)\coloneqq\{f\in L^2(\R;H); (\R\ni\xi\mapsto\xi f(\xi))\in L^2(\R;H)\},
        \\
        &m:L^2(\R;H)\supseteq\dom(m)\to L^2(\R;H),\qquad f\mapsto (\R\ni\xi\mapsto \xi f(\xi)).
\end{align*}
\begin{thm}
\label{t:spt} Let $\rho\in\mathbb{R}.$ Then
\begin{enumerate}
\item \label{rho0} $\partial_{0,0}=\mathcal{F}^{*}\ii m\mathcal{F}$,
\item \label{rho0.5}$(\e^{-\rho m})^{*}\partial_{0,0}\e^{-\rho m}=\partial_{0,\rho}-\rho$,
\item \label{rho1} $\partial_{0,\rho}=\mathcal{L}_{\rho}^{*}\big(im+\rho\big)\mathcal{L}_{\rho}$.
\end{enumerate}
\end{thm}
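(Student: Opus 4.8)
The plan is to prove (a) first as the analytic core, and then obtain (b) from the Leibniz rule and (c) as an algebraic combination of (a) and (b). For (a) I would begin on a core. For $f\in C_c^\infty(\R;H)$ integration by parts gives
\[
\mathcal{F}(f')(\xi)=\frac{1}{\sqrt{2\pi}}\int_\R f'(t)\e^{-\ii\xi t}\d t=\ii\xi\,\frac{1}{\sqrt{2\pi}}\int_\R f(t)\e^{-\ii\xi t}\d t=\ii\,m\,\mathcal{F}f(\xi),
\]
so $\mathcal{F}\partial_{0,0}f=\ii m\mathcal{F}f$ on this dense subspace, i.e. $\partial_{0,0}$ and $\mathcal{F}^*\ii m\mathcal{F}$ agree on $C_c^\infty(\R;H)$. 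The substance of the theorem is the equality of \emph{domains} $\dom(\partial_{0,0})=\mathcal{F}^*[\dom(m)]$, and this is the step I expect to be the main obstacle, since agreement on a core does not by itself force two closed operators to coincide.

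To pin the domains down I would argue by two inclusions. For $\partial_{0,0}\subseteq\mathcal{F}^*\ii m\mathcal{F}$: given $f\in H^1(\R;H)$ choose $f_n\in C_c^\infty(\R;H)$ with $f_n\to f$ and $f_n'\to f'$ in $L^2(\R;H)$ (density of test functions in the graph norm). Since $\mathcal{F}$ is unitary, $\mathcal{F}f_n\to\mathcal{F}f$ while $\ii m\mathcal{F}f_n=\mathcal{F}f_n'\to\mathcal{F}f'$; as $m$ is closed, this yields $\mathcal{F}f\in\dom(m)$ with $\ii m\mathcal{F}f=\mathcal{F}f'$. For the reverse inclusion I would use Parseval: for $\phi\in C_c^\infty(\R)$ one has $\mathcal{F}(\phi')=\ii m\mathcal{F}\phi$, so the weak-derivative identity $-\int f\phi'=\int g\phi$ transforms, via $\scp{\cdot}{\cdot}_{L^2}=\scp{\mathcal{F}\cdot}{\mathcal{F}\cdot}_{L^2}$, into the requirement $\ii m\mathcal{F}f=\mathcal{F}g$ tested against the dense set $\{\mathcal{F}\phi\}$; hence if $\mathcal{F}f\in\dom(m)$ and $g\coloneqq\mathcal{F}^*\ii m\mathcal{F}f$, then $f'=g$, that is $f\in\dom(\partial_{0,0})$. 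A shorter but less elementary alternative, which I would keep only as a remark, is to note that $m$ self-adjoint makes $\ii m$ and hence $\mathcal{F}^*\ii m\mathcal{F}$ skew-self-adjoint, that $\partial_{0,0}$ is likewise skew-self-adjoint, and that a skew-self-adjoint operator admits no proper skew-symmetric extension, so the inclusion already forces equality.

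For (b) the key is the Leibniz rule for weak derivatives: since $t\mapsto\e^{-\rho t}$ is smooth, $f\in H_\rho^1(\R;H)$ if and only if $\e^{-\rho m}f\in H^1(\R;H)$, and on these domains $\partial_{0,0}(\e^{-\rho m}f)=\e^{-\rho m}(f'-\rho f)=\e^{-\rho m}(\partial_{0,\rho}-\rho)f$. Here I would flag the notational point that in $\e^{-\rho m}$ the symbol $m$ is multiplication by the time variable whereas in $\ii m$ it is multiplication by the frequency variable, both being the multiplication-by-argument operator on the respective copy of $L^2$. Because $\e^{-\rho m}\colon L_\rho^2(\R;H)\to L^2(\R;H)$ is unitary, so that $(\e^{-\rho m})^*=(\e^{-\rho m})^{-1}$, applying $(\e^{-\rho m})^*$ on the left gives $(\e^{-\rho m})^*\partial_{0,0}\e^{-\rho m}f=(\partial_{0,\rho}-\rho)f$, the domain correspondence above guaranteeing equality of the operators and not merely of their actions.

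Finally (c) is pure bookkeeping. Writing $\mathcal{L}_\rho=\mathcal{F}\e^{-\rho m}$ and combining (a) with (b),
\[
\partial_{0,\rho}=(\e^{-\rho m})^*\partial_{0,0}\e^{-\rho m}+\rho=(\e^{-\rho m})^*\mathcal{F}^*\,\ii m\,\mathcal{F}\e^{-\rho m}+\rho=\mathcal{L}_\rho^*\,\ii m\,\mathcal{L}_\rho+\rho.
\]
Since $\mathcal{L}_\rho$ is unitary we have $\rho=\mathcal{L}_\rho^*\rho\mathcal{L}_\rho$, and as $\rho$ acts as a bounded scalar multiple of the identity the addition does not alter domains; hence $\partial_{0,\rho}=\mathcal{L}_\rho^*(\ii m+\rho)\mathcal{L}_\rho$, which is the assertion.
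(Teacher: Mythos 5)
Your proof is correct, and its overall skeleton matches the paper's: (b) by the same Leibniz/test-function computation relating $H_\rho^1(\R;H)$ to $\e^{-\rho m}$-conjugation (your explicit ``if and only if'' is in fact slightly more careful than the paper, whose displayed computation establishes only one domain inclusion, the converse following by replacing $\rho$ with $-\rho$), and (c) by the identical algebraic concatenation including the observation $\rho=\mathcal{L}_\rho^*\rho\mathcal{L}_\rho$. The genuine difference is in (a): the paper verifies the identity on the Schwartz space $\mathcal{S}(\R;H)$ and then concludes by \emph{asserting} that $\mathcal{S}(\R;H)$ is an operator core for both $m$ and $\partial_{0,0}$ --- the core property for $\partial_{0,0}$, which is exactly the nontrivial domain statement, is left unproven there. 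You instead prove the two operator inclusions directly: graph-norm density of $C_c^\infty(\R;H)$ in $H^1(\R;H)$ together with closedness of $m$ gives $\partial_{0,0}\subseteq\mathcal{F}^*\ii m\mathcal{F}$, and the Parseval-transformed weak formulation gives the reverse. This buys a more self-contained argument precisely where the paper is terse, at the cost of invoking mollification/cutoff density, which the paper's core assertion implicitly needs anyway.

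Two small caveats. First, your parenthetical alternative via maximality (``a skew-self-adjoint operator admits no proper skew-symmetric extension'') presupposes that $\partial_{0,0}$ is skew-self-adjoint, not merely skew-symmetric; establishing $\dom(\partial_{0,0}^*)=H^1(\R;H)$ is essentially the same work as your reverse inclusion, so you were right to demote it to a remark --- as the main argument it would be circular. Second, in the reverse inclusion the weak-derivative identity $-\int f\phi'=\int g\phi$ carries no complex conjugation, so when passing it through the unitarity of $\mathcal{F}$ you should test against $\overline{\phi}$ (or note that $C_c^\infty(\R)$ is conjugation-invariant) and, in the vector-valued setting, pair with an arbitrary $h\in H$ to reduce to the scalar case; this is routine but deserves a line.
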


\begin{proof}
For the proof of \ref{rho0}, we observe that the equality holds on
the Schwartz space $\mathcal{S}(\mathbb{R};H)$ of smooth, rapidly
decaying functions. In fact, this is an easy application of integration
by parts. The result thus follows from using that $\mathcal{F}$
is a bijection on $\mathcal{S}(\mathbb{R};H)$ and that $\mathcal{S}(\mathbb{R};H)$
is an operator core, for both $m$ and $\partial_{0,0}$.

For the statement \ref{rho0.5} let $f\in H_\rho^1(\R;H)$ and $\varphi\in C_c^\infty(\R)$.
Then $\e^{-\rho m}\varphi\in C_c^\infty(\R)$ and
\begin{align*}
        \int_\R (\e^{-\rho m}f)\varphi' &= \int_\R f (\e^{-\rho m}\varphi')
        \\
        &= \int_\R f \big((\e^{-\rho m}\varphi)'+\rho\e^{-\rho m}\varphi\big)
        \\
        &=-\int_\R \partial_{0,\rho}f\e^{-\rho m}\varphi + \int_\R f \rho\e^{-\rho m}\varphi
        \\
        &= -\int_\R (\e^{-\rho m}\partial_{0,\rho}f-\rho\e^{-\rho m} f)\varphi.
\end{align*}
Hence $\e^{-\rho m}f\in H_0^1(\R;H)$ and $\partial_{0,0}\e^{-\rho m} f = \e^{-\rho m}\partial_{0,\rho}f-\rho\e^{-\rho m} f$.

Next, we address \ref{rho1}. By part \ref{rho0} and \ref{rho0.5},
we compute
\begin{align*}
\partial_{0,\rho} & =(\e^{-\rho m})^{*}\partial_{0,0}\e^{-\rho m}+\rho\\
 & =(\e^{-\rho m})^{*}\mathcal{F}^{*}\ii m\mathcal{F}\e^{-\rho m}+\rho\\
 & =(\e^{-\rho m})^{*}\mathcal{F}^{*}\ii m\mathcal{F}\e^{-\rho m}+(\e^{-\rho m})^{*}\mathcal{F}^{*}\rho\mathcal{F}\e^{-\rho m}\\
 & =\mathcal{L}_{\rho}^{*}\big(\ii m+\rho\big)\mathcal{L}_{\rho}.\qedhere
\end{align*}
\end{proof}

Theorem \ref{t:spt} tells us that $\partial_{0,\rho}$ is unitarily
equivalent to a multiplication operator with spectrum equal to $\i\R+\rho=\{z\in\C;\Re z=\rho\}$.
In particular, we are now in the position
to define \emph{functions} of $\partial_{0,\rho}$.
\begin{defn*}
Let $\rho\in \R$ and  $F\colon \dom(F)\subseteq\{\i t+\rho\, ; \,t\in \R\}\to\mathbb{C}$ be measurable such that $\{t\in \R\,;\,\i t+\rho \notin \dom(F)\}$ has Lebesgue measure zero.
We define
\[
F(\partial_{0,\rho})\coloneqq\mathcal{L}_{\rho}^{*}F\big(\i m+\rho\big)\mathcal{L}_{\rho},
\]
where
\[
F\big(\i m+\rho\big)f\coloneqq\Big(\mathbb{R}\ni\xi\mapsto F\big(\i\xi+\rho\big)f(\xi)\Big)
\]
in case $f\in L^{2}(\mathbb{R};H)$ is such that $\left(\xi\mapsto F\big(\i\xi+\rho\big)f(\xi)\right)\in L^{2}(\mathbb{R};H)$.
\end{defn*}

We record an elementary fact on multiplication operators.
\begin{prop}
\label{p:bdd} Let $F$ be as in the previous definition.
We denote $\norm{F}_{\rho,\infty}\coloneqq\esssup_{\xi\in\R}|F(\i\xi+\rho)|\in[0,\infty]$.
The operator $F(\partial_{0,\rho})$ is bounded, if and only if $\|F\|_{\rho,\infty}<\infty$.
If $F(\partial_{0,\rho})$ is bounded, then $\|F(\partial_{0,\rho})\|=\|F\|_{\rho,\infty}$.
\end{prop}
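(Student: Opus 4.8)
The plan is to reduce the statement to the elementary theory of scalar multiplication operators on $L^2(\R;H)$ by exploiting the unitary equivalence built into the functional calculus. Since $\mathcal{L}_\rho\colon L_\rho^2(\R;H)\to L^2(\R;H)$ is unitary, the operator $F(\partial_{0,\rho})=\mathcal{L}_\rho^*F(\i m+\rho)\mathcal{L}_\rho$ is bounded if and only if $F(\i m+\rho)$ is, and in that case the two share the same operator norm. Writing $g\coloneqq(\xi\mapsto F(\i\xi+\rho))$, it therefore suffices to prove that the maximal multiplication operator $M$ given by $Mf=(\xi\mapsto g(\xi)f(\xi))$, with domain $\{f\in L^2(\R;H);\ gf\in L^2(\R;H)\}$, is bounded precisely when $\norm{g}_{L^\infty(\R)}=\norm{F}_{\rho,\infty}<\infty$, and that then $\norm{M}=\norm{F}_{\rho,\infty}$.

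For sufficiency and the upper norm bound, suppose $S\coloneqq\norm{F}_{\rho,\infty}<\infty$. Then for every $f\in L^2(\R;H)$ one has $\int_\R\norm{g(\xi)f(\xi)}_H^2\d\xi\le S^2\int_\R\norm{f(\xi)}_H^2\d\xi$, so that $Mf\in L^2(\R;H)$ with $\norm{Mf}\le S\norm{f}$. Hence $M$ is everywhere defined and bounded with $\norm{M}\le S$.

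The crux is the matching lower bound, which simultaneously yields necessity. Fix a unit vector $e\in H$ and let $c<S$ be arbitrary (allowing $S=\infty$). By definition of the essential supremum, the superlevel set $A_c\coloneqq\{\xi\in\R;\ \abs{g(\xi)}>c\}$ has strictly positive Lebesgue measure $\lambda$. The one point to watch is that the obvious test function need not lie in $\dom(M)$ when $g$ is unbounded; I would circumvent this by passing to a sublevel set. Since $g$ is $\C$-valued and hence almost everywhere finite, $A_c$ agrees up to a null set with $\bigcup_{k\in\N}(A_c\cap\{\abs{g}\le k\})$, so some $A_c\cap\{\abs{g}\le k\}$ has positive measure; inside it I would choose a measurable set $B$ with $0<\lambda(B)<\infty$. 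Then $f_B\coloneqq\chi_B e$ (that is, $\xi\mapsto\chi_B(\xi)e$) is a nonzero element of $\dom(M)$, and $\norm{Mf_B}^2=\norm{e}_H^2\int_B\abs{g(\xi)}^2\d\xi\ge c^2\norm{e}_H^2\lambda(B)=c^2\norm{f_B}^2$, giving $\norm{M}\ge c$.

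Letting $c\uparrow S$ shows $\norm{M}\ge S$. If $S<\infty$, this combines with the upper bound to give $\norm{M}=S$; if $S=\infty$, it shows $\norm{M}=\infty$, i.e.\ $M$ is unbounded. Transporting these conclusions back through the unitary $\mathcal{L}_\rho$ establishes the assertions for $F(\partial_{0,\rho})$. The only genuinely delicate step is the domain issue in the lower bound handled above; everything else is a routine application of unitary equivalence together with the definition of $\esssup$.
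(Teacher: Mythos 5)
Your proof is correct and follows essentially the same route as the paper: reduce by unitarity of $\mathcal{L}_\rho$ to the maximal multiplication operator, obtain the upper bound $\norm{M}\le\norm{F}_{\rho,\infty}$ by direct estimation, and obtain the matching lower bound by testing against normalized indicator functions of superlevel sets $\{\abs{F(\i\cdot+\rho)}>c\}$ of finite positive measure. If anything, your additional truncation by the sublevel sets $\{\abs{g}\le k\}$ to guarantee that the test functions lie in $\dom(M)$ is slightly more careful than the paper's argument, whose sets $A_n=[-n,n]\cap\{\abs{F(\i\cdot+\rho)}>c_n\}$ have finite measure but need not place $f_n$ in the domain when $F$ is unbounded on $A_n$.
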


\begin{proof}
Since $\mathcal L_\rho$ is unitary we may prove that $F(\i m+\rho)$ is bounded on $L^2(\R;H)$ if and only if $\norm{F}_{\rho,\infty}<\infty$.
Suppose $\norm{F}_{\rho,\infty}<\infty$.
Then for $f\in L^2(\R;H)$ we have $\int_\R \norm{F(\i \xi+\rho)f(\xi)}_H^2\d\xi \leq \norm{F}_{\rho,\infty}^2\norm{f}_{L^2}^2$.
Hence $F(\i m+\rho)$ is bounded with $\norm{F(\i m+\rho)}\leq\norm{F}_{\rho,\infty}$.
Let $F(\i m+\rho)$ be bounded.
For $f\in L^2(\R;H)$ with $\norm{f}_{L^2}=1$ we have
\[
\infty>\norm{F(\i m+\rho)}^2\geq \int_\R |F(\i\xi+\rho)|^2\norm{f(\xi)}_H^2 \d\xi.
\]
There is a sequence $(f_n)_{n\in\N}\in L^2(\R;H)^\N$ with $\norm{f_n}_{L^2}=1$ ($n\in\N$)
and $\int_\R |F(\i\xi+\rho)|^2\norm{f_n(\xi)}_H^2 \d\xi\to\esssup|F(\i\cdot+\rho)|^2$ for $n\to\infty$.
This shows $\infty>\norm{F(\i m+\rho)}\geq \norm{F}_{\rho,\infty}$.
To construct $(f_n)_{n\in\N}$ wlog. we may assume that $\esssup|F(\i\cdot+\rho)|>0$.
Let $x\in H$ with $\norm{x}_H=1$ and let $(c_n)_{n\in\N}\in\R^\N$ be a positive sequence with $c_n\uparrow\esssup|F(\i\cdot+\rho)|$.
Then for $n\in\N$ set $A_n\coloneqq [-n,n]\cap\{|F(\i\cdot+\rho)|>c_n\}$.
By the definition of the essential supremum, we may assume that $\lambda(A_n)>0$ and for $n\in\N$ we set
\[
f_n\coloneqq 1_{A_n}\frac{x}{\sqrt{\lambda(A_n)}}.\qedhere
\]
\end{proof}

One important class of operators that can be rooted to be of the form
just introduced are fractional derivatives and fractional integrals:
\begin{example}
\label{exa:deffrac}Let $\alpha \in \mathbb{R}_{>0}$ and $\rho\in \R$. Then the fractional
derivative of order $\alpha$ is given by
\[
\partial_{0,\rho}^{\alpha}=\mathcal{L}_{\rho}^{*}\big(\i m+\rho\big)^{\alpha}\mathcal{L}_{\rho}
\]
and the fractional integral of order $\alpha$ is given by
\[
\partial_{0,\rho}^{-\alpha}=\mathcal{L}_{\rho}^{*}\big(\tfrac{1}{\i m+\rho}\big)^{\alpha}\mathcal{L}_{\rho}.
\]
Note that both expressions are well-defined in the sense of functions of $\partial_{0,\rho}$ defined above and that $\partial_{0,\rho}^{-\alpha}$ is bounded iff $\rho\ne0$. Moreover, $\big(\partial_{0,\rho}^{\alpha}\big)^{-1}=\partial_{0,\rho}^{-\alpha}$.
We set $\partial_{0,\rho}^0$ as the identity operator on $L_\rho^2(\R;H)$.
\end{example}

In order to provide the connections to the more commonly known integral
representation formulas for the fractional integrals, we recall the
multiplication theorem, that is,
\[
\sqrt{2\pi}\mathcal{F}f\cdot\mathcal{F}g=\mathcal{F}(f\ast g),
\]
for $f \in L^1(\R)$ and $g \in L^2(\R; H)$.

We recall the cut-off function
\[
  \chi_{\mathbb{R}_{>0}}(t)\coloneqq\begin{cases}
1, & t > 0,\\
0, & t \leq 0.
\end{cases}
\]

\begin{lem}\label{lem:FL-trafo-frac}
\label{l:latroafo} For all $\rho,\alpha>0$, and $\xi\in\mathbb{R}$,
we have
\begin{equation}\label{eq:laplace}
\sqrt{2\pi} \mathcal{L}_{\rho}\Big(t\mapsto\tfrac{1}{\Gamma(\alpha)}t^{\alpha-1}\chi_{\mathbb{R}_{>0}}(t)\Big)(\xi)=\big(\tfrac{1}{i\xi+\rho}\big)^{\alpha}.
\end{equation}
\end{lem}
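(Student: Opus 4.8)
The plan is to unfold the definition $\mathcal{L}_{\rho}=\mathcal{F}\e^{-\rho m}$ and to recognise the resulting integral as a complex Gamma integral. First I would verify that $g\coloneqq\big(t\mapsto\tfrac{1}{\Gamma(\alpha)}t^{\alpha-1}\chi_{\mathbb{R}_{>0}}(t)\big)$ lies in $L_{\rho}^{1}(\R)$: near $0$ the singularity $t^{\alpha-1}$ is integrable because $\alpha>0$, while at $+\infty$ the weight $\e^{-\rho t}$ with $\rho>0$ forces integrability. Hence $\mathcal{L}_{\rho}g$ is given by the $L^{1}$-formula and
\[
\sqrt{2\pi}\,\mathcal{L}_{\rho}g(\xi)=\int_{\R}\e^{-\rho t}g(t)\e^{-\i\xi t}\d t=\frac{1}{\Gamma(\alpha)}\int_{0}^{\infty}t^{\alpha-1}\e^{-(\rho+\i\xi)t}\d t,
\]
the interchange of the Fourier integral with the weight being immediate since everything is absolutely convergent.

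Writing $z\coloneqq\rho+\i\xi$, so that $\Re z=\rho>0$, the claim reduces to the classical identity
\[
\int_{0}^{\infty}t^{\alpha-1}\e^{-zt}\d t=\Gamma(\alpha)\,z^{-\alpha}\qquad(\Re z>0),
\]
where $z^{-\alpha}$ is the principal branch. For real $z>0$ this is merely the definition of $\Gamma(\alpha)$ after the substitution $u=zt$. Once this is established, I would finish by matching branches: since $\Re z>0$ we have $\arg z\in\oi{-\tfrac{\pi}{2}}{\tfrac{\pi}{2}}$, so $1/z$ again lies in the right half-plane and $\big(\tfrac{1}{z}\big)^{\alpha}=z^{-\alpha}$ on the principal branch, which is exactly the branch used in the functional calculus for $(\i m+\rho)^{\alpha}$. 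This yields $\sqrt{2\pi}\,\mathcal{L}_{\rho}g(\xi)=\big(\tfrac{1}{\i\xi+\rho}\big)^{\alpha}$, as desired.

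The main obstacle is the passage from real $z>0$ to complex $z$ with $\Re z>0$ in the Gamma identity, together with the bookkeeping of the branch. I would handle this by analytic continuation: the left-hand side defines a holomorphic function of $z$ on the open right half-plane (one differentiates under the integral sign, dominating $t^{\alpha}\e^{-\Re z\,t}$ uniformly on compact subsets of $\{\Re z>0\}$ to justify the exchange), while $z\mapsto\Gamma(\alpha)z^{-\alpha}$ is holomorphic on the cut plane, hence on the right half-plane in particular. As both sides agree on the connected positive real axis, the identity theorem forces agreement on all of $\{\Re z>0\}$, and evaluation at $z=\rho+\i\xi$ completes the argument. An equivalent route, should one prefer an elementary estimate over analytic continuation, is a contour-rotation argument using Cauchy's theorem to swing the ray of integration from the positive real axis to the ray through $1/z$; either way the only genuinely delicate point is ensuring that the chosen power is the principal branch consistent with the definition of $(\i m+\rho)^{\alpha}$.
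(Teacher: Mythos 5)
Your proof is correct, but it follows a genuinely different route from the paper. You unfold $\sqrt{2\pi}\,\mathcal{L}_{\rho}g(\xi)=\frac{1}{\Gamma(\alpha)}\int_{0}^{\infty}t^{\alpha-1}\e^{-(\rho+\i\xi)t}\d t$ and then establish the complex Gamma identity $\int_{0}^{\infty}t^{\alpha-1}\e^{-zt}\d t=\Gamma(\alpha)z^{-\alpha}$ on $\{\Re z>0\}$ by analytic continuation: both sides are holomorphic in $z$ on the right half-plane, they agree on the positive real axis by the substitution $u=zt$, and the identity theorem finishes the job, after which you verify that the principal branch satisfies $(1/z)^{\alpha}=z^{-\alpha}$ for $\Re z>0$. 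The paper instead stays entirely on the real line: it sets $f(\xi)\coloneqq\int_{0}^{\infty}\e^{-(\i\xi+\rho)s}s^{\alpha-1}\d s$, differentiates in the \emph{frequency} variable $\xi$, and uses integration by parts to derive the first-order ODE $f'(\xi)=-\i\tfrac{\alpha}{\i\xi+\rho}f(\xi)$, which it solves by separation of variables and normalizes with the real Gamma integral $f(0)=\Gamma(\alpha)\rho^{-\alpha}$. The trade-off is instructive: the paper's ODE argument is elementary (no identity theorem, no holomorphy in $z$) and handles the branch automatically, since integrating $f'/f$ along the real $\xi$-axis produces the branch of $(\i\xi+\rho)^{-\alpha}$ that is continuous in $\xi$ and equals $\rho^{-\alpha}$ at $\xi=0$ --- which is the principal one, as $\i\xi+\rho$ never leaves the right half-plane; your continuation argument requires the explicit branch bookkeeping you carry out, but in exchange proves the stronger, standard half-plane identity and makes it transparent \emph{why} the principal branch is the correct one, which matters since the paper leaves the branch convention in the functional calculus for $(\i m+\rho)^{\alpha}$ implicit. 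Your justification of differentiation under the integral (domination by $t^{\alpha}\e^{-\delta t}$ on compacta with $\Re z\geq\delta>0$) is exactly the estimate the paper also needs, tacitly, for $f'(\xi)$.
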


\begin{proof}
 We start by defining the function
 \[
  f(\xi)\coloneqq \intop_0^\infty \e^{-(\i \xi +\rho)s} s^{\alpha-1} \d s
 \]
for $\xi\in \R$. Then we have
\begin{align*}
 f'(\xi)&=\intop_0^\infty -\i \e^{-(\i \xi+\rho)s} s^\alpha \d s \\
        &= -\i \frac{\alpha}{\i \xi +\rho} f(\xi),
\end{align*}
where we have used integration by parts. By separation of variables, it follows that
\[
 f(\xi)=f(0) \frac{\rho^\alpha}{(\i \xi+\rho)^\alpha}
\]
for $\xi \in \R$. Now, since
\[
 f(0)=\intop_0^\infty \e^{-\rho s} s^{\alpha-1}\d s= \frac{1}{\rho^\alpha} \Gamma(\alpha),
\]
we infer
\[
 f(\xi)=\Gamma(\alpha)\frac{1}{(\i\xi+\rho)^\alpha}.
\]
Since the left hand side of (\ref{eq:laplace}) equals $\frac{1}{\Gamma(\alpha)}f(\xi)$, the assertion follows.
\end{proof}

Next, we draw the connection from our fractional integral to the one
used in the literature.
\begin{thm}
\label{t:fracint} For all $\rho,\alpha>0$, $f\in L_{\rho}^{2}(\mathbb{R};H)$,
we have
\[
\partial_{0,\rho}^{-\alpha}f(t)=\int_{-\infty}^{t}\tfrac{1}{\Gamma(\alpha)}(t-s)^{\alpha-1}f(s)\,\mathrm{d}s.
\]
\end{thm}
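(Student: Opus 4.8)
The plan is to recognise the right-hand side as a convolution and to match Fourier--Laplace transforms. Write $g_{\alpha}(t)\coloneqq\tfrac{1}{\Gamma(\alpha)}t^{\alpha-1}\chi_{\mathbb{R}_{>0}}(t)$, so that the asserted identity reads $\partial_{0,\rho}^{-\alpha}f=g_{\alpha}\ast f$, where $(g_{\alpha}\ast f)(t)=\int_{\mathbb{R}}g_{\alpha}(t-s)f(s)\d s=\int_{-\infty}^{t}\tfrac{1}{\Gamma(\alpha)}(t-s)^{\alpha-1}f(s)\d s$, the upper limit $t$ appearing because $g_{\alpha}(t-s)=0$ for $s\geq t$. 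Since $\mathcal{L}_{\rho}$ is unitary and hence injective, it suffices to show that $g_{\alpha}\ast f$ is a well-defined element of $L_{\rho}^{2}(\mathbb{R};H)$ and that $\mathcal{L}_{\rho}(g_{\alpha}\ast f)=\big(\tfrac{1}{\i m+\rho}\big)^{\alpha}\mathcal{L}_{\rho}f=\mathcal{L}_{\rho}(\partial_{0,\rho}^{-\alpha}f)$.

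First I would settle integrability. For $\rho,\alpha>0$ the weighted kernel $\e^{-\rho m}g_{\alpha}$ lies in $L^{1}(\mathbb{R})$: near $0$ the factor $t^{\alpha-1}$ is integrable because $\alpha>0$, and as $t\to\infty$ the exponential weight dominates. By hypothesis $\e^{-\rho m}f\in L^{2}(\mathbb{R};H)$. The elementary but crucial observation is the weight-shift identity for convolution, $\e^{-\rho m}(g_{\alpha}\ast f)=(\e^{-\rho m}g_{\alpha})\ast(\e^{-\rho m}f)$, which follows from inserting $\e^{-\rho t}=\e^{-\rho(t-s)}\e^{-\rho s}$ into the convolution integral. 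As the right-hand side is an $L^{1}\ast L^{2}$ convolution, Young's inequality shows it is a well-defined element of $L^{2}(\mathbb{R};H)$ with the defining integral absolutely convergent for almost every $t$; hence $g_{\alpha}\ast f\in L_{\rho}^{2}(\mathbb{R};H)$.

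Next I would transform. Using the weight-shift identity together with the multiplication theorem, which applies precisely because $\e^{-\rho m}g_{\alpha}\in L^{1}(\mathbb{R})$ and $\e^{-\rho m}f\in L^{2}(\mathbb{R};H)$, I compute $\mathcal{L}_{\rho}(g_{\alpha}\ast f)=\mathcal{F}\big((\e^{-\rho m}g_{\alpha})\ast(\e^{-\rho m}f)\big)=\sqrt{2\pi}\,\mathcal{F}(\e^{-\rho m}g_{\alpha})\cdot\mathcal{F}(\e^{-\rho m}f)=\sqrt{2\pi}\,\mathcal{L}_{\rho}(g_{\alpha})\cdot\mathcal{L}_{\rho}f$. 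By Lemma \ref{l:latroafo}, $\sqrt{2\pi}\,\mathcal{L}_{\rho}(g_{\alpha})(\xi)=\big(\tfrac{1}{\i\xi+\rho}\big)^{\alpha}$, so the product equals $\big(\tfrac{1}{\i m+\rho}\big)^{\alpha}\mathcal{L}_{\rho}f$, which is exactly $\mathcal{L}_{\rho}(\partial_{0,\rho}^{-\alpha}f)$ by the definition of the functional calculus (note that for $\rho>0$ the multiplier is bounded, so $\partial_{0,\rho}^{-\alpha}f$ is defined for every $f\in L_{\rho}^{2}(\mathbb{R};H)$ and no domain issue arises). Injectivity of $\mathcal{L}_{\rho}$ then gives $\partial_{0,\rho}^{-\alpha}f=g_{\alpha}\ast f$, as claimed.

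The computation itself is short; the part that requires care is the analytic bookkeeping — verifying $\e^{-\rho m}g_{\alpha}\in L^{1}(\mathbb{R})$, justifying the weight-shift identity and the almost-everywhere convergence of the convolution integral, and confirming that the multiplication theorem is being used in its correct $L^{1}(\mathbb{R})\ast L^{2}(\mathbb{R};H)$, Hilbert-space-valued form. Once these are in place, the identification of the multiplier through Lemma \ref{l:latroafo} is immediate and the injectivity argument closes the proof.
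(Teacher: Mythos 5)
Your proposal is correct and takes essentially the same route as the paper's own proof: both recognize the right-hand side as the convolution $g_{\alpha}\ast f$, use the weight-shift identity $\e^{-\rho m}(g_{\alpha}\ast f)=(\e^{-\rho m}g_{\alpha})\ast(\e^{-\rho m}f)$ together with Young's inequality to see that this lies in $L_{\rho}^{2}(\mathbb{R};H)$, apply the multiplication theorem for the Fourier transform, and invoke Lemma \ref{l:latroafo} to identify the multiplier $\bigl(\tfrac{1}{\i m+\rho}\bigr)^{\alpha}$. The only cosmetic difference is that you conclude via injectivity of $\mathcal{L}_{\rho}$ whereas the paper applies $\mathcal{L}_{\rho}^{\ast}$ directly to unwind the functional calculus; the substance is identical.
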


\begin{proof}
For the proof we set $g \coloneqq \big(t \in \R \mapsto \frac{1}{\Gamma(\alpha)} t^{\alpha - 1} \chi_{\R_{>0}}(t)\big)$.
Then $g \in L_\rho^1(\R)$.
For $f \in L_\rho^2(\R; H)$ we have by Youngs convolution inequality
\begin{equation*}
        (e^{-\rho m} g) * (e^{-\rho m} f) = e^{-\rho m} (g*f) \in L^2(\R; H).
\end{equation*}
Using the convolution property of the Fourier transform we obtain
\begin{equation*}
        \sqrt{2\pi} \mathcal L_\rho g \cdot \mathcal L_\rho f = \mathcal L_\rho(g * f).
\end{equation*}
Using Lemma \ref{lem:FL-trafo-frac} we compute
\begin{align*}
        \partial_{0, \rho}^{-\alpha} f &= \mathcal L_\rho^* \left(\frac{1}{im + \rho}\right)^\alpha \mathcal L_\rho f
        \\
        &= \mathcal L_\rho^* \big(\sqrt{2\pi} \mathcal L_\rho g \cdot \mathcal L_\rho f\big)
        \\
        &= \mathcal L_\rho^* \mathcal L_\rho (g*f)
        \\
        &= \int_{-\infty}^{(\cdot)}\tfrac{1}{\Gamma(\alpha)}\big((\cdot)-s\big)^{\alpha-1}f(s)\,\mathrm{d}s.\qedhere
\end{align*}
\end{proof}

\begin{cor}
\label{c:poly} Let $\rho,\alpha>0$. Then for all $t\in\mathbb{R}$,
we have for $h\in H$
\[
(\partial_{0,\rho}^{-\alpha}\chi_{\mathbb{R}_{>0}}h)(t)=\begin{cases}
\tfrac{1}{\Gamma(\alpha+1)}t^{\alpha}h, & t>0,\\
0, & t\leq0.
\end{cases}
\]
\end{cor}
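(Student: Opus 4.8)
The plan is to read this off directly from the integral representation of $\partial_{0,\rho}^{-\alpha}$ established in Theorem~\ref{t:fracint}, applied to the explicit input $f=\chi_{\mathbb{R}_{>0}}h$. Before invoking that theorem I would first verify that this input is admissible, i.e. that $\chi_{\mathbb{R}_{>0}}h\in L_\rho^2(\R;H)$. Since $\rho>0$, this is a one-line computation:
\[
\int_\R \norm{\chi_{\mathbb{R}_{>0}}(s)h}_H^2\,\e^{-2\rho s}\d s=\norm{h}_H^2\int_0^\infty \e^{-2\rho s}\d s=\frac{\norm{h}_H^2}{2\rho}<\infty.
\]

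With admissibility secured, Theorem~\ref{t:fracint} yields, for a.e.\ $t\in\R$,
\[
(\partial_{0,\rho}^{-\alpha}\chi_{\mathbb{R}_{>0}}h)(t)=\int_{-\infty}^{t}\tfrac{1}{\Gamma(\alpha)}(t-s)^{\alpha-1}\chi_{\mathbb{R}_{>0}}(s)h\d s,
\]
and I would then distinguish the two cases in the asserted formula. For $t\le 0$ the integration variable ranges over $(-\infty,t]\subseteq(-\infty,0]$, where $\chi_{\mathbb{R}_{>0}}(s)=0$, so the integrand vanishes identically and the integral is $0$. For $t>0$ the cut-off restricts the domain of integration to $(0,t)$, leaving $\tfrac{1}{\Gamma(\alpha)}\int_0^t(t-s)^{\alpha-1}\d s\,h$. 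The substitution $u=t-s$ turns this into $\tfrac{1}{\Gamma(\alpha)}\int_0^t u^{\alpha-1}\d u\,h=\tfrac{1}{\Gamma(\alpha)}\tfrac{t^\alpha}{\alpha}h$, and the functional equation $\Gamma(\alpha+1)=\alpha\Gamma(\alpha)$ rewrites the prefactor as $\tfrac{1}{\Gamma(\alpha+1)}$, giving exactly $\tfrac{1}{\Gamma(\alpha+1)}t^\alpha h$.

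I do not expect any genuine obstacle here: the corollary is a direct unwinding of Theorem~\ref{t:fracint} on a concrete function. The only two points deserving a word of care are, first, that membership in $L_\rho^2(\R;H)$ genuinely uses $\rho>0$ (it fails for $\rho=0$, consistent with $\partial_{0,\rho}^{-\alpha}$ being bounded only for $\rho\neq0$, cf.\ Example~\ref{exa:deffrac}); and second, the passage from the almost-everywhere identity delivered by Theorem~\ref{t:fracint} to the pointwise-for-all-$t$ statement claimed in the corollary. The latter is harmless, since the right-hand side $t\mapsto\tfrac{1}{\Gamma(\alpha+1)}t^\alpha\chi_{\mathbb{R}_{>0}}(t)h$ is continuous on all of $\R$ (note $t^\alpha\to0$ as $t\downarrow0$ because $\alpha>0$), so it is the canonical continuous representative of the equivalence class and the stated pointwise formula is the natural reading.
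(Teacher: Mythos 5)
Your proposal is correct and follows essentially the same route as the paper: both invoke Theorem \ref{t:fracint} with $f=\chi_{\mathbb{R}_{>0}}h$, split into the cases $t>0$ and $t\leq0$, evaluate the elementary integral $\int_0^t(t-s)^{\alpha-1}\,\mathrm{d}s$, and rewrite $\tfrac{1}{\alpha\Gamma(\alpha)}$ as $\tfrac{1}{\Gamma(\alpha+1)}$. Your two extra remarks --- the explicit check that $\chi_{\mathbb{R}_{>0}}h\in L_\rho^2(\mathbb{R};H)$ (which genuinely needs $\rho>0$) and the choice of the continuous representative to upgrade the almost-everywhere identity to the pointwise-for-all-$t$ statement --- are sensible refinements of details the paper leaves implicit, not a different method.
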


\begin{proof}
We use Theorem \ref{t:fracint} and obtain for $t\in\mathbb{R}$
\begin{align*}
(\partial_{0,\rho}^{-\alpha}\chi_{\mathbb{R}_{>0}}h)(t) & =\int_{-\infty}^{t}\tfrac{1}{\Gamma(\alpha)}(t-s)^{\alpha-1}\chi_{\mathbb{R}_{>0}}(s)\:h\:\mathrm{d}s\\
 & =\int_{\mathbb{R}}\chi_{\mathbb{R}_{>0}}(t-s)\chi_{\mathbb{R}_{>0}}(s)\tfrac{1}{\Gamma(\alpha)}(t-s)^{\alpha-1}\:\mathrm{d}s\;h.
\end{align*}
Thus, if $t>0$, we obtain
\begin{align*}
(\partial_{0,\rho}^{-\alpha}\chi_{\mathbb{R}_{>0}}h)(t) & =\int_{0}^{t}\tfrac{1}{\Gamma(\alpha)}(t-s)^{\alpha-1}\:\mathrm{d}s\:h\\
 & =\int_{0}^{t}\tfrac{1}{\Gamma(\alpha)}s^{\alpha-1}\:\mathrm{d}s\:h=\tfrac{1}{\Gamma(\alpha)}\tfrac{1}{\alpha}t^{\alpha}\:h.
\end{align*}
For $t\leq0$, we infer $\chi_{\mathbb{R}_{>0}}(t-s)\chi_{\mathbb{R}_{>0}}(s) = 0$ for $s \in \R$, i.e.\ $(\partial_{0,\rho}^{-\alpha}\chi_{\mathbb{R}_{>0}}h)(t) = 0$.
\end{proof}

\begin{rem}
It seems to be hard to determine analog formulas for the case $\rho<0$, although the operator $\partial_{0,\rho}^{-\alpha}$ for $\rho<0,\alpha>0$ is bounded.
The reason for this is that the corresponding multiplier $(\i m+\rho)^{-\alpha}$ is not defined in $0$ and has a jump there.
In particular, it cannot be extended to an analytic function on some right half plane of $\mathbb{C}$.
This, however, corresponds to the causality or anticausality of the operator $\partial_{0,\rho}^{-\alpha}$ by a Paley-Wiener result (\cite{PW} or \cite[19.2 Theorem]{Futh1})
and hence, we cannot expect to get a  convolution formula as in the case $\rho>0$.
\end{rem}

\section{A reformulation of classical Riemann\textendash Liouville and Caputo
differential equations\label{sec:RLC1}}

There are two main concepts of fractional differentiation (or integration).
In this section we shall start to identify both of these notions as being part of the \emph{same} solution theory, related to the spectral representation construction above.
We study Riemann\textendash Liouville and Caputo differential equations and their respective integral equations.
This section is only an introduction for the sections to come, where a comprehensive theory regarding well-posedness of fractional differential equations for a wide range of right-hand sides, is provided.
In fact it will turn out that Caputo differential equations can be readily rephrased with the notions developed in Section \ref{sec:rho_calc}.
We shall see that for Riemann\textendash Liouville differential equations some more theory has to be put in place.

To start off, we recall the Caputo differential equation. In \cite{Diethelm2003},
the author treated the following initial value problem of Caputo type
for $1\geq\alpha>0$:
\begin{align*}
D_{*}^{\alpha}y(t) & =f(t,y(t))\quad (t>0)\\
y(0) & =y_{0},
\end{align*}
where a solution $y$ is continuous at zero and $y_{0}\in\mathbb{C}^{n}$ is a given initial value; $f\colon\mathbb{R}_{>0}\times\mathbb{C}^{n}\to\mathbb{C}^{n}$
is continuous, satisfying
\begin{equation}\label{eq:Lipschitz}
\left|f(t,y_{1})-f(t,y_{2})\right|\leq c|y_{1}-y_{2}|
\end{equation}
for some $c\geq0$ and all $y_{1},y_{2}\in\mathbb{C}^{n},$ $t>0$.
For definiteness, we shall also assume that
\begin{equation}\label{eq:growth}
(t\mapsto f(t,0))\in L^2_{\rho_0}(\R_{>0};\C^n)
\end{equation}
for some $\rho_0\in \R$. In order to circumvent
discussions of how to \emph{interpret} the initial condition, we shall
rather put \cite[Equation (6)]{Diethelm2003} into the perspective
of the present exposition. In fact, this equation reads
\begin{equation}
y(t)=y_{0}+\frac{1}{\Gamma(\alpha)}\int_{0}^{t}(t-s)^{\alpha-1}f(s,y(s)) \d s\quad(t>0).\label{eq:IVP_Cap}
\end{equation}

First of all, we remark that in contrast to the setting in the previous
section, the differential equation just discussed `lives' on $\mathbb{R}_{>0}$,
only.
To this end we put
\begin{align*}
\tilde{f}\colon\mathbb{R}\times\mathbb{C}^{n}\to\mathbb{C}^{n},\quad & (t,y)\mapsto\chi_{\R_{> 0}}(t)f(t,y),
\end{align*}
with the apparent meaning that $\tilde{f}$ vanishes for negative
times $t$. We note that by \eqref{eq:Lipschitz} and \eqref{eq:growth} it follows that
\[
 L^2_\rho(\R) \ni y\mapsto (t\mapsto \tilde{f}(t,y(t)))\in L^2_\rho(\R)
\]
is a well-defined Lipschitz continuous mapping for all $\rho\geq \rho_0$.
Obviously, \eqref{eq:IVP_Cap} is equivalent to
\begin{equation}
y(t)=y_{0}\chi_{\R_{> 0}}(t)+\frac{1}{\Gamma(\alpha)}\int_{-\infty}^{t}(t-s)^{\alpha-1}\tilde{f}(s,y(s)) \d s\quad(t>0),\label{eq:IVP_CapR}
\end{equation}
which in turn can be (trivially) stated for \emph{all} $t\in\mathbb{R}$.
Next, we present the desired reformulation of equation \eqref{eq:IVP_CapR}.

\begin{thm}
\label{thm:Int_Cap}Let $\rho>\max\{0,\rho_0\}$. Assume that $y\in L_{\rho}^{2}(\mathbb{R})$.
Then the following statements are equivalent:

\begin{enumerate}

\item[(i)] $y(t)=y_{0}\chi_{\R_{> 0}}(t)+\frac{1}{\Gamma(\alpha)}\int_{-\infty}^{t}(t-s)^{\alpha-1}\tilde{f}(s,y(s)) \d s$ for almost every $t\in \R$,

\item[(ii)] $y=\partial_{0,\rho}^{-\alpha}\tilde{f}(\cdot,y(\cdot))+y_{0}\chi_{\R_{> 0}}$,

\item[(iii)] $\partial_{0,\rho}^{\alpha}(y-y_{0}\chi_{\R_{> 0}})=\tilde{f}(\cdot,y(\cdot))$.

\end{enumerate}
\end{thm}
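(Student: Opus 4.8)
The plan is to obtain (i) $\Leftrightarrow$ (ii) as a direct transcription of the convolution representation in Theorem \ref{t:fracint}, and to derive (ii) $\Leftrightarrow$ (iii) from the fact, recorded in Example \ref{exa:deffrac}, that $\partial_{0,\rho}^{\alpha}$ and $\partial_{0,\rho}^{-\alpha}$ are mutually inverse. Throughout I abbreviate $g \coloneqq \tilde{f}(\cdot, y(\cdot))$. First I would check that every expression occurring in (i)--(iii) makes sense: since $\rho > \max\{0,\rho_0\} \geq \rho_0$, the substitution map $y \mapsto (t \mapsto \tilde{f}(t,y(t)))$ sends $L_\rho^2(\R)$ into itself (the Lipschitz statement established just before the theorem), so $g \in L_\rho^2(\R)$; and $y_0\chi_{\R_{>0}} \in L_\rho^2(\R)$ because $\rho > 0$ gives $\int_0^\infty |y_0|^2 \e^{-2\rho t}\d t < \infty$.

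For (i) $\Leftrightarrow$ (ii) I would apply Theorem \ref{t:fracint} to $g \in L_\rho^2(\R)$: the integral $\tfrac{1}{\Gamma(\alpha)}\int_{-\infty}^{(\cdot)}((\cdot)-s)^{\alpha-1}g(s)\d s$ is by definition $\partial_{0,\rho}^{-\alpha} g$ as an element of $L_\rho^2(\R)$. Statement (i) is then precisely the almost-everywhere rendering of the $L_\rho^2$-identity $y = \partial_{0,\rho}^{-\alpha}g + y_0\chi_{\R_{>0}}$, which is (ii); since elements of $L_\rho^2(\R)$ are equivalence classes modulo null sets, the two formulations coincide.

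For (ii) $\Leftrightarrow$ (iii) the decisive point is that $\rho>0$ renders $\partial_{0,\rho}^{-\alpha}$ bounded and, by Example \ref{exa:deffrac}, a bijection on $L_\rho^2(\R)$ with $(\partial_{0,\rho}^{\alpha})^{-1}=\partial_{0,\rho}^{-\alpha}$, so that $\partial_{0,\rho}^{\alpha}\partial_{0,\rho}^{-\alpha}=\identity$ on all of $L_\rho^2(\R)$ while $\partial_{0,\rho}^{-\alpha}\partial_{0,\rho}^{\alpha}=\identity$ on $\dom(\partial_{0,\rho}^{\alpha})$. Assuming (ii), the right-hand side $\partial_{0,\rho}^{-\alpha}g$ is a $\partial_{0,\rho}^{-\alpha}$-image and hence lies in $\dom(\partial_{0,\rho}^{\alpha})$; applying $\partial_{0,\rho}^{\alpha}$ and the first identity gives $\partial_{0,\rho}^{\alpha}(y-y_0\chi_{\R_{>0}})=g$, i.e.\ (iii). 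Conversely, applying the bounded operator $\partial_{0,\rho}^{-\alpha}$ to (iii) and invoking the second identity returns (ii).

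The one point deserving care --- which I regard as the main (if modest) obstacle --- is the domain bookkeeping for the unbounded operator $\partial_{0,\rho}^{\alpha}$ in (iii): the equation there is only meaningful once $y-y_0\chi_{\R_{>0}}\in\dom(\partial_{0,\rho}^{\alpha})$. This membership is automatic in the direction (ii) $\Rightarrow$ (iii), where $y-y_0\chi_{\R_{>0}}$ is visibly a $\partial_{0,\rho}^{-\alpha}$-image, and it is built into the hypothesis in the direction (iii) $\Rightarrow$ (ii); in particular no separate regularity analysis of $\chi_{\R_{>0}}$ (which individually need not lie in $\dom(\partial_{0,\rho}^{\alpha})$) is required. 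It is exactly the boundedness of $\partial_{0,\rho}^{-\alpha}$, i.e.\ the hypothesis $\rho>\max\{0,\rho_0\}$, that makes both compositions collapse to the identity on the relevant sets.
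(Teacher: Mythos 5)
Your proposal is correct and takes essentially the same approach as the paper, whose entire proof is the single sentence that the assertion ``follows trivially from Theorem \ref{t:fracint}''. You have merely made explicit the details that remark suppresses: the membership $\tilde f(\cdot,y(\cdot)),\,y_0\chi_{\R_{>0}}\in L_\rho^2(\R)$, the identification of (i) with (ii) via Theorem \ref{t:fracint}, and the domain bookkeeping for $(\partial_{0,\rho}^{\alpha})^{-1}=\partial_{0,\rho}^{-\alpha}$ from Example \ref{exa:deffrac} in passing between (ii) and (iii).
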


\begin{proof}
The assertion follows trivially from Theorem \ref{t:fracint}.
\end{proof}

\begin{rem}
\leavevmode
\begin{enumerate}
\item
For a real valued-function $g:\R_{>0}\times\R^n\to\R^n$ we may consider the Caputo differential equation with $f:\R_{>0}\times\C^n\to\C^n,
(t,z)\mapsto g(t,\Re(z))$.
\item
In particular, we have shown in Theorem \ref{thm:Int_Cap} that the notions of so-called mild and strong solutions coincide.
\end{enumerate}
\end{rem}

Next we introduce Riemann\textendash Liouville differential equations.
Using the exposition in \cite{Samko1993}, we want to discuss
the Riemann\textendash Liouville fractional differential equation
given by
\begin{align*}
\frac{d^{\alpha}}{dx^{\alpha}}y(x) & =f(x,y(x)),\\
\left.\frac{d^{\alpha-1}}{dx^{\alpha-1}}y(x)\right|_{x=0+} & =y_{0},
\end{align*}
where as before $f$ satisfies \eqref{eq:Lipschitz} and \eqref{eq:growth} and $y_{0}\in\mathbb{R},$
and $\alpha\in(0,1]$. Again, not hinging on too much of an interpretation
of this equation, we shall rather reformulate the equivalent integral
equation related to this initial value problem. According to \cite[Chapter 42]{Samko1993}
this initial value problem can be formulated as
\[
y(t)=y_{0}\frac{t^{\alpha-1}}{\Gamma(\alpha)}+\frac{1}{\Gamma(\alpha)}\int_{0}^{t}(t-s)^{\alpha-1}f(s,y(s)) \d s\quad(t>0).
\]
We abbreviate $g_\beta(t)\coloneqq \frac{1}{\Gamma(\beta+1)}t^{\beta}\chi_{\R_{>0}}(t)$ for $t,\beta\in\R$.
For $\alpha>1/2$ we have $g_{\alpha-1}\in L_\rho^2(\R;H)$.
Let us assume that $\alpha>1/2$.
Invoking the cut-off function $\chi_{\R_{> 0}}$ and defining $\tilde{f}$ as
before, we may provide a reformulation of the Riemann\textendash Liouville equation on the space $L_\rho^2(\R;H)$ by
\[
y=g_{\alpha-1}y_0+\partial_{0,\rho}^{-\alpha}f(\cdot,y(\cdot)),\qquad y\in L_\rho^2(\R;H).
\]
By a formal calculation and when applying Corollary \ref{c:poly}, i.e.\ $\partial_{0,\rho}^{-\alpha}\chi_{\R_{>0}}y_0=g_\alpha y_0$, we would obtain
\[
g_{\alpha-1}y_0=\partial_{0,\rho}g_\alpha y_0=\partial_{0,\rho}\partial_{0,\rho}^{-\alpha}\chi_{\R_{>0}}y_0
=\partial_{0,\rho}^{-\alpha}\partial_{0,\rho}\chi_{\R_{>0}}y_0=\partial_{0,\rho}^{-\alpha}y_0\delta_0,
\]
where $\partial_{0,\rho}\chi_{\R_{>0}}y_0$ is, when understood distributionally, the delta function $y_0\delta_0$ and we could reformulate the Riemann\textendash Liouville equation by
\begin{equation}\label{eq:RL}
\partial_{0,\rho}^{\alpha}y=y_{0}\delta_{0}+\tilde{f}(\cdot,y(\cdot)).
\end{equation}
However, the calculation indicates that we have to extend the $L_\rho^2(\R;H)$ calculus to understand Riemann\textendash Liouville differential equations.
This will be done in the coming sections.

\section{Extra- and interpolation spaces}

We begin to define extra- and interpolation spaces associated with
the fractional derivative $\partial_{0,\rho}^{\alpha}$ for $\rho\ne0,$
$\alpha\in\mathbb{R}$. Since by definition
\[
\partial_{0,\rho}^{\alpha}=\mathcal{L}_{\rho}^{\ast}\left(\i m+\rho\right)^{\alpha}\mathcal{L}_{\rho},
\]
we will define the extra- and interpolation spaces in terms of the
multiplication operators $(\i m+\rho)^{\alpha}$ on $L^{2}(\mathbb{R};H).$

\begin{defn*}
Let $\rho\ne0$. For each $\alpha\in\mathbb{R}$ we define the space
\[
H^{\alpha}(\i m+\rho)\coloneqq\left\{ f\in L^1_{\mathrm{loc}}(\mathbb{R};H)\,;\,\intop_{\mathbb{R}}\|(\i t+\rho)^{\alpha}f(t)\|_{H}^{2}\text{ d}t<\infty\right\}
\]
and equip it with the natural inner product
\[
\langle f,g\rangle_{H^{\alpha}(\i m+\rho)}\coloneqq\intop_{\mathbb{R}}\langle(\i t+\rho)^{\alpha}f(t),(\i t+\rho)^{\alpha}g(t)\rangle_{H}\text{ d}t
\]
for each $f,g\in H^{\alpha}(\i m+\rho).$
\end{defn*}

We shall use $X\hookrightarrow Y$ to denote the mapping $X\ni x\mapsto x\in Y$,
if $X\subseteq Y$ (under a canonical identification, which will
always be obvious from the context).

\begin{lem}
\label{lem:Sobolev_chain_mult}For $\rho\ne0$ and $\alpha\in\mathbb{R}$
the space $H^{\alpha}(\i m+\rho)$ is a Hilbert space. Moreover, for
$\beta>\alpha$ we have
\[
j_{\beta\to\alpha}:H^{\beta}(\i m+\rho)\hookrightarrow H^{\alpha}(\i m+\rho)
\]
where the embedding is dense and continuous with $\|j_{\beta\to\alpha}\|\leq|\rho|^{\alpha-\beta}$.
\end{lem}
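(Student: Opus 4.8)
The plan is to reduce every assertion to a statement about multiplication operators on $L^2(\R;H)$ induced by the symbols $t\mapsto(\i t+\rho)^\gamma$, whose modulus is $\abs{(\i t+\rho)^\gamma}=(t^2+\rho^2)^{\gamma/2}$. The decisive structural fact, used throughout, is that $\rho\neq0$ forces $\abs{\i t+\rho}=\sqrt{t^2+\rho^2}\ge\abs{\rho}>0$ for every $t\in\R$; hence each symbol is continuous and nowhere vanishing, and for $\gamma\le0$ it is bounded by $\abs{\rho}^{\gamma}$.

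First I would show that the multiplication map
\[
M_\alpha\colon H^\alpha(\i m+\rho)\to L^2(\R;H),\qquad f\mapsto(\i m+\rho)^\alpha f,
\]
(with $(\i m+\rho)^\alpha f$ understood pointwise as $t\mapsto(\i t+\rho)^\alpha f(t)$) is an isometric isomorphism. It is isometric directly from the definition of $\norm{\cdot}_{H^\alpha(\i m+\rho)}$, hence injective; this already shows that the given sesquilinear form is a genuine inner product, since $\norm{f}_{H^\alpha(\i m+\rho)}=0$ forces $(\i t+\rho)^\alpha f(t)=0$ for a.e.\ $t$ and therefore $f=0$ a.e. For surjectivity, given $g\in L^2(\R;H)$ I would set $f\coloneqq(\i m+\rho)^{-\alpha}g$; because $(\i t+\rho)^{-\alpha}$ is locally bounded (again by $\rho\neq0$), we have $f\in L^1_{\mathrm{loc}}(\R;H)$, and plainly $M_\alpha f=g$ with $f\in H^\alpha(\i m+\rho)$. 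Since $L^2(\R;H)$ is complete and $M_\alpha$ is an isometric isomorphism, $H^\alpha(\i m+\rho)$ is complete, and thus a Hilbert space.

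For the continuity of $j_{\beta\to\alpha}$ together with the stated norm bound, the heart of the matter is a pointwise weight comparison: for $\beta>\alpha$ the exponent $\alpha-\beta$ is negative, so $(t^2+\rho^2)^{\alpha-\beta}\le(\rho^2)^{\alpha-\beta}=\abs{\rho}^{2(\alpha-\beta)}$, whence
\[
(t^2+\rho^2)^{\alpha}=(t^2+\rho^2)^{\alpha-\beta}(t^2+\rho^2)^{\beta}\le\abs{\rho}^{2(\alpha-\beta)}(t^2+\rho^2)^{\beta}.
\]
Multiplying by $\norm{f(t)}_H^2$ and integrating gives $\norm{f}_{H^\alpha(\i m+\rho)}\le\abs{\rho}^{\alpha-\beta}\norm{f}_{H^\beta(\i m+\rho)}$. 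This simultaneously yields the inclusion $H^\beta(\i m+\rho)\subseteq H^\alpha(\i m+\rho)$ (so that $j_{\beta\to\alpha}$ is well defined) and the bound $\norm{j_{\beta\to\alpha}}\le\abs{\rho}^{\alpha-\beta}$.

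It remains to prove density, which is the step demanding the most care. I would transport the problem to $L^2(\R;H)$ via the unitary $M_\alpha$: writing $M_\alpha f=(\i m+\rho)^{\alpha-\beta}\big((\i m+\rho)^{\beta}f\big)$ and noting that $f\mapsto(\i m+\rho)^\beta f$ maps $H^\beta(\i m+\rho)$ onto $L^2(\R;H)$ (it is $M_\beta$), the image $M_\alpha\big(H^\beta(\i m+\rho)\big)$ is exactly the range of the bounded multiplication operator $S$ on $L^2(\R;H)$ with symbol $(\i t+\rho)^{\alpha-\beta}$. A bounded multiplication operator with an almost-everywhere nonzero symbol has dense range: its adjoint is multiplication by the conjugate symbol, which has the same modulus and is thus also nonzero a.e., so $\ker S^\ast=\{0\}$ and $\overline{\operatorname{ran}S}=(\ker S^\ast)^\perp=L^2(\R;H)$. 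Applying the unitary $M_\alpha^{-1}$ returns density of $H^\beta(\i m+\rho)$ in $H^\alpha(\i m+\rho)$. I expect the main obstacle to be purely the bookkeeping that keeps each occurrence of $(\i m+\rho)^{\pm\gamma}$ meaningful and locally bounded; this is exactly what $\rho\neq0$ secures, and it is the common ingredient behind the surjectivity, the weight comparison, and the dense-range argument.
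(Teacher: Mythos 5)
Your proof is correct, and it diverges from the paper's in one genuine respect. The completeness and the norm estimate are handled essentially as in the paper: the paper identifies $H^{\alpha}(\i m+\rho)$ with the weighted space $L^{2}(\mu;H)$, where $\mu$ is Lebesgue measure weighted by $t\mapsto\abs{\i t+\rho}^{2\alpha}$, and invokes the Fischer--Riesz theorem \textemdash{} the same device as your isometric isomorphism $M_{\alpha}$ onto $L^{2}(\R;H)$, except that your explicit surjectivity check (local boundedness of $(\i t+\rho)^{-\alpha}$ to ensure $f=(\i m+\rho)^{-\alpha}g\in L^{1}_{\mathrm{loc}}$) makes visible a point the paper leaves tacit; and your weight comparison $(t^{2}+\rho^{2})^{\alpha-\beta}\leq(\rho^{2})^{\alpha-\beta}$ is literally the paper's computation. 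Where you take a different route is density: the paper disposes of it in one line by citing that $C_{c}^{\infty}(\mathbb{R};H)$ is dense in $H^{\gamma}(\i m+\rho)$ for every $\gamma\in\mathbb{R}$ (a standard fact it does not prove there), whereas you transport the embedding to $L^{2}(\R;H)$ via $M_{\alpha}$, identify the image of $H^{\beta}(\i m+\rho)$ with the range of the bounded multiplication operator $S$ with symbol $(\i t+\rho)^{\alpha-\beta}$, and conclude density from $\ker S^{\ast}=\{0\}$, which holds because the symbol is nowhere vanishing. Your argument is self-contained and needs nothing beyond elementary operator theory; the paper's route, by contrast, buys a single dense subspace common to all the spaces $H^{\gamma}(\i m+\rho)$ simultaneously, which the rest of the paper relies on anyway (compare Lemma \ref{lem:Sobolev_chain_fractional}, where $C_{c}^{\infty}$-functions are pushed through $\mathcal{L}_{\rho}^{\ast}$ and $(\i m+\rho)^{\gamma}$, and Lemma \ref{lem:testfunctions_dense}). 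Both proofs are sound; yours trades that reusable dense set for rigor at the one step the paper merely asserts.
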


\begin{proof}
Note that $H^{\alpha}(\i m+\rho)=L^{2}(\mu;H)$, where $\mu$ is the
Lebesgue measure on $\mathbb{R}$ weighted with the function $t\mapsto|\ii t+\rho|^{2\alpha}$.
Thus, $H^{\alpha}(\i m+\rho)$ is a Hilbert space by the Fischer\textendash Riesz
theorem. Let now $\beta>\alpha$ and $f\in H^{\beta}(\i m+\rho)$.
Then
\[
\intop_{\mathbb{R}}\|(\i t+\rho)^{\alpha}f(t)\|_{H}^{2}\text{ d}t=\intop_{\mathbb{R}}(t^{2}+\rho^{2})^{\alpha-\beta}\|(\i t+\rho)^{\beta}f(t)\|_{H}^{2}\text{ d}t\leq\left(\rho^{2}\right)^{\alpha-\beta}\|f\|_{H^{\beta}(\i m+\rho)}^{2},
\]
which proves the continuity of the embedding $j_{\beta\to\alpha}$
and the asserted norm estimate. The density follows, since $C_{c}^{\infty}(\mathbb{R};H)$
lies dense in $H^{\gamma}(\i m+\rho)$ for each $\gamma\in\mathbb{R}.$
\end{proof}

\begin{defn*}
Let $\rho\ne0$ and $\alpha\in\mathbb{R}.$ We consider the space
\[
W_{\rho}^{\alpha}(\mathbb{R};H)\coloneqq\left\{ u\in L^2_{\rho}(\mathbb{R};H)\,;\,\mathcal{L}_{\rho}u\in H^{\alpha}(\i m+\rho)\right\}
\]
equipped with the inner product
\[
\langle u,v\rangle_{\rho,\alpha}\coloneqq\langle\mathcal{L}_{\rho}u,\mathcal{L}_{\rho}v\rangle_{H^{\alpha}(\i m+\rho)}
\]
and set $H_{\rho}^{\alpha}(\mathbb{R};H)$ as its completion with
respect to the norm induced by $\langle\cdot,\cdot\rangle_{\rho,\alpha}.$
\end{defn*}

\begin{lem}
\label{lem:Sobolev_chain_fractional}Let $\rho\ne0$.
\begin{enumerate}
\item For $\alpha\geq0$ we have that $H_{\rho}^{\alpha}(\mathbb{R};H)=W_{\rho}^{\alpha}(\mathbb{R};H)=\dom(\partial_{0,\rho}^{\alpha}).$
\item The operator
\[
\mathcal{L}_{\rho}:W_{\rho}^{\alpha}(\mathbb{R};H)\subseteq H_{\rho}^{\alpha}(\mathbb{R};H)\to H^{\alpha}(\i m+\rho)
\]
has a unique unitary extension, which will again be denoted by $\mathcal{L}_{\rho}.$
\item For $\alpha,\beta\in\mathbb{R}$ with $\beta>\alpha$ we have that
\[
\iota_{\beta\to\alpha}:H_{\rho}^{\beta}(\mathbb{R};H)\hookrightarrow H_{\rho}^{\alpha}(\mathbb{R};H)
\]
is continuous and dense with $\|\iota_{\beta\to\alpha}\|\leq|\rho|^{\alpha-\beta}.$
\item For each $\beta>0$ and $\alpha\in\mathbb{R}$ the operator
\[
\partial_{0,\rho}^{\beta}:H_{\rho}^{\beta+|\alpha|}(\mathbb{R};H)\subseteq H_{\rho}^{\alpha}(\mathbb{R};H)\to H_{\rho}^{\alpha-\beta}(\mathbb{R};H)
\]
has a unique unitary extension, which will again be denoted by $\partial_{0,\rho}^{\beta}.$
\end{enumerate}
\end{lem}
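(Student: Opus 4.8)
The plan is to transport every assertion, via the Fourier--Laplace transform, to elementary statements about the weighted spaces $H^{\gamma}(\i m+\rho)$ already settled in Lemma \ref{lem:Sobolev_chain_mult}. The computation I would isolate first and reuse is that, for all $\gamma,\beta\in\R$, multiplication by $(\i t+\rho)^{\beta}$ is a surjective isometry $(\i m+\rho)^{\beta}\colon H^{\gamma}(\i m+\rho)\to H^{\gamma-\beta}(\i m+\rho)$, hence unitary: this is immediate from $\abs{(\i t+\rho)^{\gamma-\beta}(\i t+\rho)^{\beta}}=\abs{(\i t+\rho)^{\gamma}}$, the inverse being multiplication by $(\i t+\rho)^{-\beta}$. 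I would also record two consequences of $\abs{\i t+\rho}^{2}=t^{2}+\rho^{2}\geq\rho^{2}$: for $\gamma\geq0$ one has $H^{\gamma}(\i m+\rho)\subseteq L^{2}(\R;H)$, and $C_{c}^{\infty}(\R;H)\subseteq H^{\gamma}(\i m+\rho)$ densely for every $\gamma\in\R$ (the latter quoted in Lemma \ref{lem:Sobolev_chain_mult}).

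For (a) I would observe that by definition $\mathcal{L}_{\rho}$ is an isometry of $W_{\rho}^{\alpha}(\R;H)$ into $H^{\alpha}(\i m+\rho)$, and that for $\alpha\geq0$ it is onto: given $g\in H^{\alpha}(\i m+\rho)\subseteq L^{2}(\R;H)$, the function $u\coloneqq\mathcal{L}_{\rho}^{*}g\in L_{\rho}^{2}(\R;H)$ satisfies $\mathcal{L}_{\rho}u=g\in H^{\alpha}(\i m+\rho)$, so $u\in W_{\rho}^{\alpha}(\R;H)$. Thus $W_{\rho}^{\alpha}(\R;H)$ is isometrically isomorphic to the Hilbert space $H^{\alpha}(\i m+\rho)$, hence complete, so it already equals its completion $H_{\rho}^{\alpha}(\R;H)$. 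Unwinding the functional calculus, for $\alpha\geq0$ one has $\dom\big((\i m+\rho)^{\alpha}\big)=H^{\alpha}(\i m+\rho)$ as a set, whence $\dom(\partial_{0,\rho}^{\alpha})=\{u\in L_{\rho}^{2}(\R;H)\,;\,\mathcal{L}_{\rho}u\in H^{\alpha}(\i m+\rho)\}=W_{\rho}^{\alpha}(\R;H)$.

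For (b) the map $\mathcal{L}_{\rho}\colon W_{\rho}^{\alpha}(\R;H)\to H^{\alpha}(\i m+\rho)$ is an isometry whose domain is dense in $H_{\rho}^{\alpha}(\R;H)$ by construction of the completion, and whose range $H^{\alpha}(\i m+\rho)\cap L^{2}(\R;H)$ contains $C_{c}^{\infty}(\R;H)$ and is therefore dense in $H^{\alpha}(\i m+\rho)$; an isometry from a dense subspace onto a dense range extends uniquely to a unitary between the completions. For (c) I would then transport $j_{\beta\to\alpha}$: on the dense subspace $W_{\rho}^{\beta}(\R;H)\subseteq W_{\rho}^{\alpha}(\R;H)$ the inclusion $\iota_{\beta\to\alpha}$ coincides with $\mathcal{L}_{\rho}^{*}j_{\beta\to\alpha}\mathcal{L}_{\rho}$, so after extending by density to $H_{\rho}^{\beta}(\R;H)$ it inherits continuity, the bound $\norm{\iota_{\beta\to\alpha}}=\norm{j_{\beta\to\alpha}}\leq\abs{\rho}^{\alpha-\beta}$, and dense range directly from Lemma \ref{lem:Sobolev_chain_mult}.

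Part (d) is where I expect the bookkeeping of domains to be the main obstacle. Since $\beta+\abs{\alpha}\geq0$, part (a) gives $H_{\rho}^{\beta+\abs{\alpha}}(\R;H)=W_{\rho}^{\beta+\abs{\alpha}}(\R;H)\subseteq L_{\rho}^{2}(\R;H)$, and $\beta+\abs{\alpha}>\alpha$ makes it a dense subspace of $H_{\rho}^{\alpha}(\R;H)$ by (c). For $u\in W_{\rho}^{\beta+\abs{\alpha}}(\R;H)$ I would check that the functional-calculus definition $\partial_{0,\rho}^{\beta}u=\mathcal{L}_{\rho}^{*}(\i m+\rho)^{\beta}\mathcal{L}_{\rho}u$ literally applies, since $\mathcal{L}_{\rho}u\in H^{\beta+\abs{\alpha}}(\i m+\rho)\subseteq H^{\beta}(\i m+\rho)$ forces $(\i m+\rho)^{\beta}\mathcal{L}_{\rho}u\in L^{2}(\R;H)$; moreover $\mathcal{L}_{\rho}\partial_{0,\rho}^{\beta}u=(\i m+\rho)^{\beta}\mathcal{L}_{\rho}u\in H^{\alpha-\beta}(\i m+\rho)$ with $\norm{\partial_{0,\rho}^{\beta}u}_{\rho,\alpha-\beta}=\norm{(\i m+\rho)^{\beta}\mathcal{L}_{\rho}u}_{H^{\alpha-\beta}(\i m+\rho)}=\norm{\mathcal{L}_{\rho}u}_{H^{\alpha}(\i m+\rho)}=\norm{u}_{\rho,\alpha}$. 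Hence $\partial_{0,\rho}^{\beta}$ is an isometry of $(W_{\rho}^{\beta+\abs{\alpha}}(\R;H),\norm{\cdot}_{\rho,\alpha})$ into $H_{\rho}^{\alpha-\beta}(\R;H)$; its domain is dense in $H_{\rho}^{\alpha}(\R;H)$, and its range is dense because under $\mathcal{L}_{\rho}$ it corresponds to $(\i m+\rho)^{\beta}[H^{\beta+\abs{\alpha}}(\i m+\rho)]=H^{\abs{\alpha}}(\i m+\rho)$, which embeds densely into $H^{\alpha-\beta}(\i m+\rho)$ since $\abs{\alpha}>\alpha-\beta$. An isometry with dense domain and dense range extends uniquely to a unitary $H_{\rho}^{\alpha}(\R;H)\to H_{\rho}^{\alpha-\beta}(\R;H)$, which is the asserted extension; the appearance of $\abs{\alpha}$ rather than $\alpha$ in the domain exponent is exactly what keeps the domain inside $L_{\rho}^{2}(\R;H)$ while still guaranteeing the dense range in $H_{\rho}^{\alpha-\beta}(\R;H)$.
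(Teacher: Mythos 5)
Your proposal is correct and follows essentially the same route as the paper: transport everything via $\mathcal{L}_{\rho}$ to the multiplication operators $(\i m+\rho)^{\beta}$, use Lemma \ref{lem:Sobolev_chain_mult}, and extend isometries with dense domain and dense range to unitaries. The only cosmetic difference is in (d), where the paper exhibits explicit preimages $\mathcal{L}_{\rho}^{\ast}(\i m+\rho)^{-\beta}\varphi$ of $\mathcal{L}_{\rho}^{\ast}[C_{c}^{\infty}(\mathbb{R};H)]$ to get dense range, while you identify the full range as $H^{\abs{\alpha}}(\i m+\rho)$ and invoke the dense embedding into $H^{\alpha-\beta}(\i m+\rho)$ --- the same density fact in either case.
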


\begin{proof}
$\,$
\begin{enumerate}
\item Let $\alpha \geq 0$. For $u \in H^\alpha(\i m+\rho)$, i.e. $u \in L_\textrm{loc}^1(\R; H)$ and $(\i m+\rho)^\alpha u \in L^2(\R; H)$, we infer that $u \in L^2(\R; H)$.
It follows that $u \in \dom((\i m+\rho)^\alpha)$.
Hence $H^\alpha(\i m+\rho) = \dom((\i m+\rho)^\alpha)$.
Moreover,
\begin{align*}
u\in W_{\rho}^{\alpha}(\mathbb{R};H) & \Leftrightarrow u\in L^2_{\rho}(\mathbb{R};H)\wedge\mathcal{L}_{\rho}u\in H^{\alpha}(\i m+\rho)\\
 & \Leftrightarrow u\in L^2_{\rho}(\mathbb{R};H)\wedge\mathcal{L}_{\rho}u\in\dom\left((\i m+\rho)^{\alpha}\right)\\
 & \Leftrightarrow u\in\dom(\partial_{0,\rho}^{\alpha}),
\end{align*}
by Example \ref{exa:deffrac}. Moreover, since
\[
\mathcal{L}_{\rho}:W_{\rho}^{\alpha}(\mathbb{R};H)\to H^{\alpha}(\i m+\rho)
\]
is unitary, we infer that $W_{\rho}^{\alpha}(\mathbb{R};H)$ is complete
with respect to $\|\cdot\|_{\rho,\alpha}=\|\mathcal{L}_{\rho}\cdot\|_{H^{\alpha}(\i m+\rho)},$
and thus $H_{\rho}^{\alpha}(\mathbb{R};H)=W_{\rho}^{\alpha}(\mathbb{R};H).$
\item Obviously,
\[
\mathcal{L}_{\rho}:W_{\rho}^{\alpha}(\mathbb{R};H)\subseteq H_{\rho}^{\alpha}(\mathbb{R};H)\to H^{\alpha}(\i m+\rho)
\]
is isometric by the definition of the norm on $H_{\rho}^{\alpha}(\mathbb{R};H).$
Moreover, its range is dense, since $\mathcal{L}_{\rho}^{\ast}\varphi\in W_{\rho}^{\alpha}(\mathbb{R};H)$
for each $\varphi\in C_{c}^{\infty}(\mathbb{R};H)$ and thus, $C_{c}^{\infty}(\mathbb{R};H)\subseteq\mathcal{L}_{\rho}\left[W_{\rho}^{\alpha}(\mathbb{R};H)\right].$
Hence, the continuous extension of $\mathcal{L}_{\rho}$ to $H_{\varrho}^{\alpha}(\mathbb{R};H)$
is onto and, thus, unitary.
\item Since $\iota_{\beta\to\alpha}=\mathcal{L}_{\rho}^{\ast}j_{\beta\to\alpha}\mathcal{L}_{\rho},$
the assertion follows from Lemma \ref{lem:Sobolev_chain_mult}.
\item Since,
\begin{align*}
(\i m+\rho)^{\beta}:H^{\alpha}(\i m+\rho) & \to H^{\alpha-\beta}(\i m+\rho)\\
f & \mapsto\left(t\mapsto(\i t+\rho)^{\beta}f(t)\right)
\end{align*}
is obviously unitary, we infer that for $u\in H_{\rho}^{\beta+|\alpha|}(\mathbb{R};H)$
\begin{align*}
\|\partial_{0,\rho}^{\beta}u\|_{\rho,\alpha-\beta} & =\|\mathcal{L}_{\rho}\partial_{0,\rho}^{\beta}u\|_{H^{\alpha-\beta}(\i m+\rho)}\\
 & =\|(\i m+\rho)^{\beta}\mathcal{L}_{\rho}u\|_{H^{\alpha-\beta}(\i m+\rho)}\\
 & =\|\mathcal{L}_{\rho}u\|_{H^{\alpha}(\i m+\rho)}\\
 & =\|u\|_{\rho,\alpha},
\end{align*}
which shows that $\partial_{0,\rho}^{\beta}$ is an isometry. Moreover,
for $\varphi\in C_{c}^{\infty}(\mathbb{R};H)$, we have that $(\i m+\rho)^{\gamma}\varphi\in C_{c}^{\infty}(\mathbb{R};H)$
for all $\gamma\in\mathbb{R}$ and thus, in particular $\mathcal{L}_{\rho}^{\ast}(\i m+\rho)^{-\beta}\varphi\in\bigcap_{\gamma\in\mathbb{R}}H_{\rho}^{\gamma}(\mathbb{R};H)\subseteq H_{\rho}^{\beta+|\alpha|}(\mathbb{R};H).$
Next,
\[
\partial_{0,\rho}^{\beta}\mathcal{L}_{\rho}^{\ast}(\i m+\rho)^{-\beta}\varphi=\mathcal{L}_{\rho}^{\ast}(\i m+\rho)^{\beta}\mathcal{L}_{\rho}\mathcal{L}_{\rho}^{\ast}(\i m+\rho)^{-\beta}\varphi=\mathcal{L}_{\rho}^{\ast}\varphi
\]
and thus, $\mathcal{L}_{\rho}^{\ast}[C_{c}^{\infty}(\mathbb{R};H)]\subseteq\partial_{0,\rho}^{\beta}[H_{\rho}^{\beta+|\alpha|}(\mathbb{R};H)]$.
Since $C_{c}^{\infty}(\mathbb{R};H)$ is dense in $H^{\alpha-\beta}(\i m+\rho)$,
we infer that $\mathcal{L}_{\rho}^{\ast}[C_{c}^{\infty}(\mathbb{R};H)]$
is dense in $H_{\rho}^{\alpha-\beta}(\mathbb{R};H)$ and thus, $\partial_{0,\rho}^{\beta}$
has dense range. This completes the proof.\hfill{}$\qedhere$
\end{enumerate}
\end{proof}
We conclude this section by providing an alternative perspective to
elements lying in $H_{\rho}^{\alpha}(\mathbb{R};H)$ for some $\alpha\in\mathbb{R}$
(with a particular focus on $\alpha<0$). In particular, we aim for
a definition of a support for those elements which coincides with
the usual support of $L^{2}$ functions in the case $\alpha\geq0$.

\begin{lem}
\label{lem:sigma-1_unitary}
Let $\rho\ne0$ and $\alpha\in\mathbb{R}$.
Then
\begin{align*}
\sigma_{-1}:W_{\rho}^{\alpha}(\mathbb{R};H)\subseteq H_{\rho}^{\alpha}(\mathbb{R};H) & \to H_{-\rho}^{\alpha}(\mathbb{R};H)\\
f & \mapsto\left(t\mapsto f(-t)\right)
\end{align*}
extends to a unitary operator.
Moreover, for $f \in H_\rho^\alpha(\R; H)$ we have
\[
\mathcal L_{-\rho} \sigma_{-1} f=\sigma_{-1} \mathcal L_\rho f\qquad\mathrm{and}\qquad \mathcal L_{-\rho}^* \sigma_{-1}f=\sigma_{-1} \mathcal L_\rho^* f.
\]
\end{lem}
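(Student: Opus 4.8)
The plan is to reduce the statement to three elementary commutation identities for the reflection $\sigma_{-1}$ at the level of $L^2$, and then to lift them along the scale $(H_\rho^\alpha)_\alpha$ by density, observing at the key step that the multiplier weight is insensitive to the sign of $\rho$.

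First I would record how $\sigma_{-1}$ interacts with the two factors of $\mathcal{L}_\rho=\mathcal{F}\e^{-\rho m}$. The substitution $s=-t$ shows that $\sigma_{-1}\colon L_\rho^2(\R;H)\to L_{-\rho}^2(\R;H)$ is a surjective isometry whose inverse is again a reflection. A direct computation on $L^1\cap L^2$ together with density gives $\mathcal{F}\sigma_{-1}=\sigma_{-1}\mathcal{F}$, and pointwise one has $\sigma_{-1}\e^{-\rho m}=\e^{\rho m}\sigma_{-1}$ as bounded maps $L_\rho^2(\R;H)\to L^2(\R;H)$. Composing these yields the master identity
\[
\mathcal{L}_{-\rho}\sigma_{-1}=\mathcal{F}\e^{\rho m}\sigma_{-1}=\mathcal{F}\sigma_{-1}\e^{-\rho m}=\sigma_{-1}\mathcal{F}\e^{-\rho m}=\sigma_{-1}\mathcal{L}_\rho
\]
on $L_\rho^2(\R;H)$. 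The second ingredient is that the weight $|\i t+\rho|^{2\alpha}=(t^2+\rho^2)^\alpha$ is even in $t$ and depends on $\rho$ only through $\rho^2$; hence $\sigma_{-1}$ maps $H^\alpha(\i m+\rho)$ isometrically onto $H^\alpha(\i m-\rho)$, and in fact these two multiplier spaces coincide.

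Next I would transport the claim to $W_\rho^\alpha(\R;H)$. For $u\in W_\rho^\alpha(\R;H)$ we have $\mathcal{L}_\rho u\in H^\alpha(\i m+\rho)$, so the master identity gives $\mathcal{L}_{-\rho}\sigma_{-1}u=\sigma_{-1}\mathcal{L}_\rho u\in H^\alpha(\i m-\rho)$; thus $\sigma_{-1}u\in W_{-\rho}^\alpha(\R;H)$ and, using that $\sigma_{-1}$ is isometric on the multiplier spaces,
\[
\|\sigma_{-1}u\|_{-\rho,\alpha}=\|\mathcal{L}_{-\rho}\sigma_{-1}u\|_{H^\alpha(\i m-\rho)}=\|\sigma_{-1}\mathcal{L}_\rho u\|_{H^\alpha(\i m-\rho)}=\|\mathcal{L}_\rho u\|_{H^\alpha(\i m+\rho)}=\|u\|_{\rho,\alpha}.
\]
Hence $\sigma_{-1}\colon W_\rho^\alpha(\R;H)\to W_{-\rho}^\alpha(\R;H)$ is an isometry, and it is onto because $\sigma_{-1}^2=I$ exchanges the roles of $\rho$ and $-\rho$. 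Since $W_{\pm\rho}^\alpha(\R;H)$ is dense in its completion $H_{\pm\rho}^\alpha(\R;H)$, this isometry extends uniquely to a unitary operator between the completions, which proves the first assertion. By the same density argument the master identity $\mathcal{L}_{-\rho}\sigma_{-1}=\sigma_{-1}\mathcal{L}_\rho$ persists on all of $H_\rho^\alpha(\R;H)$, the operators $\mathcal{L}_{\pm\rho}$ and $\sigma_{-1}$ now being understood as the unitary extensions from Lemma~\ref{lem:Sobolev_chain_fractional}; this is the first displayed identity.

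Finally the adjoint relation is pure unitary bookkeeping: multiplying $\mathcal{L}_{-\rho}\sigma_{-1}=\sigma_{-1}\mathcal{L}_\rho$ on the left by $\mathcal{L}_{-\rho}^*$ and on the right by $\mathcal{L}_\rho^*$ and using $\mathcal{L}_{-\rho}^*\mathcal{L}_{-\rho}=I$ and $\mathcal{L}_\rho\mathcal{L}_\rho^*=I$ yields $\sigma_{-1}\mathcal{L}_\rho^*=\mathcal{L}_{-\rho}^*\sigma_{-1}$, which is the second displayed identity. I expect the only real care to be the bookkeeping in the case $\alpha<0$: there the elements of $H_\rho^\alpha(\R;H)$ are abstract completion elements rather than honest functions, so the formula $f\mapsto f(-\,\cdot\,)$ defines $\sigma_{-1}$ only on the dense subspace $W_\rho^\alpha(\R;H)$ and unitarity is obtained by continuous extension; throughout one must keep track of which space each occurrence of $\sigma_{-1}$ and each $\mathcal{L}_{\pm\rho}$ acts on.
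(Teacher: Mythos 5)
Your proposal is correct and follows essentially the same route as the paper: establish $\mathcal{L}_{-\rho}\sigma_{-1}=\sigma_{-1}\mathcal{L}_{\rho}$ on $W_{\rho}^{\alpha}(\mathbb{R};H)$, deduce isometry from the evenness of the weight $(t^{2}+\rho^{2})^{\alpha}$, get dense range from $\sigma_{-1}[W_{\rho}^{\alpha}(\mathbb{R};H)]=W_{-\rho}^{\alpha}(\mathbb{R};H)$, and extend everything by density. You in fact supply two details the paper leaves implicit --- the factorization $\mathcal{L}_{-\rho}\sigma_{-1}=\mathcal{F}\e^{\rho m}\sigma_{-1}=\sigma_{-1}\mathcal{F}\e^{-\rho m}$ proving the commutation identity, and the derivation of the adjoint relation by sandwiching with $\mathcal{L}_{-\rho}^{*}$ and $\mathcal{L}_{\rho}^{*}$ --- so nothing is missing.
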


\begin{proof}
For $f\in W_{\rho}^{\alpha}(\mathbb{R};H)$ we have that
\[
\mathcal{L}_{-\rho}\sigma_{-1}f=\sigma_{-1}\mathcal{L}_{\rho}f
\]
and hence,
\begin{align*}
\intop_{\mathbb{R}}\left\|(\i t-\rho)^{\alpha}\left(\mathcal{L}_{-\rho}\sigma_{-1}f\right)(t)\right\|_H^{2}\text{ d}t & =\intop_{\mathbb{R}}\left(t^{2}+\rho^{2}\right)^{\alpha}\left\|\left(\mathcal{L}_{\rho}f\right)(-t)\right\|_H^{2}\text{ d}t\\
 & =\intop_{\mathbb{R}}\left(t^{2}+\rho^{2}\right)^{\alpha}\left\|\left(\mathcal{L}_{\rho}f\right)(t)\right\|_H^{2}\text{ d}t=\|f\|_{H_{\rho}^{\alpha}(\mathbb{R};H)}^{2},
\end{align*}
which proves the isometry of $\sigma_{-1}.$ Moreover, $\sigma_{-1}$
has dense range, since $\sigma_{-1}[W_{\rho}^{\alpha}(\mathbb{R};H)]=W_{-\rho}^{\alpha}(\mathbb{R};H).$
Hence, $\sigma_{-1}$ extends to a unitary operator.
The equality $\mathcal L_{-\rho} \sigma_{-1} f=\sigma_{-1} \mathcal L_\rho f$ holds for $f\in H_\rho^\alpha(\R;H)$,
since $W_\rho^\alpha(\R;H)$ is dense in its completion $H_\rho^\alpha(\R;H)$.
\end{proof}

\begin{prop}
\label{prop:distributions}Let $\rho\ne0,$ $\alpha\in\mathbb{R}$
and $f\in H_{\rho}^{\alpha}(\mathbb{R};H).$ Then
\[
\langle f,\cdot\rangle:C_{c}^{\infty}(\mathbb{R};H)\to\mathbb{C}
\]
given by
\[
\langle f,\varphi\rangle\coloneqq\intop_{\mathbb{R}}\langle\mathcal{L}_{\rho}f(t),\mathcal{L}_{-\rho}\varphi(t)\rangle_{H}\,\mathrm{d}t
\]
defines a distribution. Moreover, for $f\in H_{\rho}^{\alpha}(\mathbb{R};H)$ and $\varphi\in C_c^\infty(\R;H)$
we have
\[
\langle f,\varphi\rangle=\langle f,\partial_{0,\rho}^{-\alpha}\e^{2\rho m}\sigma_{-1}\partial_{0,\rho}^{-\alpha}\sigma_{-1}\varphi\rangle_{\rho,\alpha}.
\]
In particular, for $\alpha=0$
\[
\langle f,\varphi\rangle=\intop_{\mathbb{R}}\langle f(t),\varphi(t)\rangle_{H}\,\mathrm{d}t.
\]
Note that the operator $\partial_{0,\rho}^{-\alpha}\e^{2\rho m}\sigma_{-1}\partial_{0,\rho}^{-\alpha}\sigma_{-1}$ maps $H_{-\rho}^{-\alpha}(\R; H)$ to $H_{\rho}^{\alpha}(\R; H)$ unitarily.
\end{prop}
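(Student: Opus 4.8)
The whole argument is organized around the unitary $\mathcal{L}_\rho\colon H_\rho^\alpha(\R;H)\to H^\alpha(\i m+\rho)$ from Lemma~\ref{lem:Sobolev_chain_fractional}, which turns $\partial_{0,\rho}^{-\alpha}$, $\e^{2\rho m}$ and $\sigma_{-1}$ into (products of) multiplication operators, so that the pairing becomes a weighted $L^2$-inner product on the Fourier--Laplace side. Writing $T\coloneqq\partial_{0,\rho}^{-\alpha}\e^{2\rho m}\sigma_{-1}\partial_{0,\rho}^{-\alpha}\sigma_{-1}$, I would first settle the concluding remark that $T\colon H_{-\rho}^{-\alpha}(\R;H)\to H_\rho^\alpha(\R;H)$ is unitary, since this is exactly what will make the functional continuous. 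The cleanest route is to read $T$ as a composition of unitaries through the chain
\[
H_{-\rho}^{-\alpha}\xrightarrow{\ \sigma_{-1}\ }H_{\rho}^{-\alpha}\xrightarrow{\ \partial_{0,\rho}^{-\alpha}\ }L_\rho^2\xrightarrow{\ \sigma_{-1}\ }L_{-\rho}^2\xrightarrow{\ \e^{2\rho m}\ }L_\rho^2\xrightarrow{\ \partial_{0,\rho}^{-\alpha}\ }H_{\rho}^{\alpha},
\]
using $H_\rho^0=L_\rho^2$. Here the $\sigma_{-1}$-steps are unitary by Lemma~\ref{lem:sigma-1_unitary}, the two $\partial_{0,\rho}^{-\alpha}$-steps are unitary by Lemma~\ref{lem:Sobolev_chain_fractional}(d) (its unitaries may be inverted, so raising the exponent by $\alpha$ is allowed), and the only genuinely new ingredient $\e^{2\rho m}\colon L_{-\rho}^2\to L_\rho^2$ is a trivial isometric isomorphism, since $\int_\R\norm{\e^{2\rho t}g(t)}_H^2\e^{-2\rho t}\,\mathrm{d}t=\int_\R\norm{g(t)}_H^2\e^{2\rho t}\,\mathrm{d}t$.

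Next I would prove the identity $\langle f,\varphi\rangle=\langle f,T\varphi\rangle_{\rho,\alpha}$ by computing $\mathcal{L}_\rho T\varphi$ for a test function $\varphi$, noting that $C_c^\infty(\R;H)\subseteq W_\rho^\beta(\R;H)$ for every $\beta$, so all operators act on genuine functions. Peeling $T$ from the inside out and using the intertwining relations $\mathcal{L}_{-\rho}\sigma_{-1}=\sigma_{-1}\mathcal{L}_\rho$ (Lemma~\ref{lem:sigma-1_unitary}), $\mathcal{L}_\rho\e^{2\rho m}=\mathcal{F}\e^{\rho m}=\mathcal{L}_{-\rho}$, and $\mathcal{L}_\rho\partial_{0,\rho}^{-\alpha}=(\i m+\rho)^{-\alpha}\mathcal{L}_\rho$, together with $\sigma_{-1}\sigma_{-1}\varphi=\varphi$, one arrives at
\[
\mathcal{L}_\rho T\varphi=(\i m+\rho)^{-\alpha}(-\i m+\rho)^{-\alpha}\,\mathcal{L}_{-\rho}\varphi=(m^2+\rho^2)^{-\alpha}\,\mathcal{L}_{-\rho}\varphi,
\]
the last equality holding for almost every $\xi$ because $-\i\xi+\rho=\overline{\i\xi+\rho}$ and $z^{-\alpha}\overline{z}^{-\alpha}=\abs{z}^{-2\alpha}$ for the conjugation-symmetric branch of the power. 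Substituting into $\langle f,T\varphi\rangle_{\rho,\alpha}=\int_\R\abs{\i\xi+\rho}^{2\alpha}\langle\mathcal{L}_\rho f(\xi),\mathcal{L}_\rho T\varphi(\xi)\rangle_H\,\mathrm{d}\xi$, the weights $\abs{\i\xi+\rho}^{2\alpha}$ and $(\xi^2+\rho^2)^{-\alpha}$ cancel and leave $\int_\R\langle\mathcal{L}_\rho f(\xi),\mathcal{L}_{-\rho}\varphi(\xi)\rangle_H\,\mathrm{d}\xi=\langle f,\varphi\rangle$.

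With the identity and the unitarity of $T$ at hand, the distribution property is a short estimate: by Cauchy--Schwarz $\abs{\langle f,\varphi\rangle}=\abs{\langle f,T\varphi\rangle_{\rho,\alpha}}\le\norm{f}_{\rho,\alpha}\norm{T\varphi}_{\rho,\alpha}=\norm{f}_{\rho,\alpha}\norm{\varphi}_{-\rho,-\alpha}$, so it remains to control $\norm{\varphi}_{-\rho,-\alpha}^2=\int_\R(\xi^2+\rho^2)^{-\alpha}\norm{\mathcal{L}_{-\rho}\varphi(\xi)}_H^2\,\mathrm{d}\xi$ by finitely many $C_c^\infty$-seminorms. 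Since $\mathcal{L}_{-\rho}\varphi=\mathcal{F}(\e^{\rho m}\varphi)$ is the Fourier transform of a compactly supported smooth function, repeated integration by parts yields, for each $N$, a bound $\norm{\mathcal{L}_{-\rho}\varphi(\xi)}_H\le C_N(1+\xi^2)^{-N}\max_{k\le 2N}\norm{\varphi^{(k)}}_\infty$ with $C_N$ depending only on $\rho$, $N$ and the support; choosing $N$ large makes the integral finite (the weight is bounded near $0$ as $\rho\ne0$) and shows that $\varphi_n\to0$ in $C_c^\infty(\R;H)$ forces $\norm{\varphi_n}_{-\rho,-\alpha}\to0$, hence continuity. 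Finally, for $\alpha=0$ the map $\mathcal{L}_\rho=\mathcal{F}\e^{-\rho m}$ is unitary on $L^2(\R;H)$, so $\langle f,\varphi\rangle=\langle\mathcal{F}\e^{-\rho m}f,\mathcal{F}\e^{\rho m}\varphi\rangle_{L^2}=\langle\e^{-\rho m}f,\e^{\rho m}\varphi\rangle_{L^2}=\int_\R\langle f(t),\varphi(t)\rangle_H\,\mathrm{d}t$, the real weights $\e^{\mp\rho t}$ cancelling pointwise.

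The isometry bookkeeping and the $\alpha=0$ computation are routine; the two points needing care are the almost-everywhere identity $(\i\xi+\rho)^{-\alpha}(-\i\xi+\rho)^{-\alpha}=(\xi^2+\rho^2)^{-\alpha}$ --- which is precisely where the chosen branch of the power and the exclusion of $\xi=0$ (recall $\rho\ne0$) enter, and which I would record as the only delicate algebraic step --- and the Paley--Wiener decay estimate that upgrades $C_c^\infty$-convergence to $H_{-\rho}^{-\alpha}$-convergence. I expect the latter to be the main obstacle, since it is the one place genuinely using smoothness and compact support of the test functions rather than mere Hilbert space formalism.
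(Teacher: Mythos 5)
Your proposal is correct and follows essentially the same route as the paper: both arguments work on the Fourier--Laplace side, using the intertwining relation $\mathcal{L}_{-\rho}\sigma_{-1}=\sigma_{-1}\mathcal{L}_{\rho}$ and the a.e.\ cancellation $(\i\xi+\rho)^{-\alpha}(-\i\xi+\rho)^{-\alpha}=(\xi^{2}+\rho^{2})^{-\alpha}$, so that your computation of $\mathcal{L}_{\rho}T\varphi$ is just a condensed version of the paper's stepwise insertion of the weights $(\i m+\rho)^{\alpha}$ and $(-\i m+\rho)^{-\alpha}$ into the pairing, and your unitary factorization of $T$ makes explicit what the paper only asserts. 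The single organizational difference is the continuity estimate: the paper first embeds $H_{\rho}^{\alpha}(\R;H)$ into an integer-order space $H_{\rho}^{-k}(\R;H)$ and then applies one H\"older step using $\mathcal{L}_{-\rho}\varphi^{(k)}$ and $\norm{\varphi^{(k)}}_{\infty}$, whereas you bound $\abs{\langle f,\varphi\rangle}\leq\norm{f}_{\rho,\alpha}\norm{\varphi}_{-\rho,-\alpha}$ via the unitarity of $T$ (the very estimate the paper records in Proposition \ref{prop:char_distributions}) and then control $\norm{\varphi}_{-\rho,-\alpha}$ by arbitrary-order integration by parts --- both are the same mechanism of trading derivatives for decay, yours needing more derivatives than the paper's minimal choice of $k$.
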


\begin{proof}
Let $f\in H_{\rho}^{\alpha}(\mathbb{R};H).$ We first prove that the
expression $\langle f,\cdot\rangle$ is indeed a distribution. Due
to Lemma \ref{lem:Sobolev_chain_fractional}(c) it suffices to prove
this for $f\in H_{\rho}^{-k}(\mathbb{R};H)$ for some $k\in\mathbb{N}.$
Indeed, if $f\in H_{\varrho}^{-k}(\mathbb{R};H),$ then we know that
\[
\left(t\mapsto(\i t+\rho)^{-k}\left(\mathcal{L}_{\rho}f\right)(t)\right)\in L^{2}(\mathbb{R};H)
\]
and hence, for $\varphi\in C_{c}^{\infty}(\mathbb{R};H)$ we obtain
using H\"older's inequality and the fact that $\mathcal L_{-\rho}\varphi^{(k)} = (\i m+\rho)^k\mathcal L_{-\rho} \varphi$
\begin{align*}
|\langle f,\varphi\rangle| & \leq\intop_{\mathbb{R}}|\langle(\i t+\rho)^{-k}(\mathcal{L}_{\rho}f)(t),(-\i t+\rho)^{k}\left(\mathcal{L}_{-\rho}\varphi\right)(t)\rangle_{H}|\text{ d}t\\
 & \leq\left\|\mathcal{L}_{\rho}f\right\|_{H^{-k}(\i m+\rho)}\|\mathcal{L}_{-\rho}\left(\varphi^{(k)}\right)\|_{L^{2}(\mathbb{R};H)}\\
 & \leq\left\|\mathcal{L}_{\rho}f\right\|_{H^{-k}(\i m+\rho)}\left(\:\intop_{\spt\varphi}\e^{2\rho t}\text{ d}t\right)^{\frac{1}{2}}\|\varphi^{(k)}\|_{\infty},
\end{align*}
which proves that $\langle f,\cdot\rangle$ is indeed a distribution.
Next, we prove the asserted formula. For this, we note the following
elementary equality
\[
\sigma_{-1}\mathcal{L}_{\rho}\varphi=\mathcal{L}_{\rho}e^{2\rho m}\sigma_{-1}\varphi
\]
for $\varphi\in L_{\rho}^{2}(\mathbb{R};H)$. Let $f\in H_{\rho}^{\alpha}(\mathbb{R};H)$
and compute
\begin{align*}
\langle f,\varphi\rangle & =\langle\mathcal{L}_{\rho}f,\mathcal{L}_{-\rho}\varphi\rangle_{L^{2}(\mathbb{R};H)}\\
 & =\langle\mathcal{L}_{\rho}f,\sigma_{-1}\mathcal{L}_{\rho}\sigma_{-1}\varphi\rangle_{L^{2}(\mathbb{R};H)}\\
 & =\langle(\ii m+\rho)^{\alpha}\mathcal{L}_{\rho}f,(-\ii m+\rho)^{-\alpha}\sigma_{-1}\mathcal{L}_{\rho}\sigma_{-1}\varphi\rangle_{L^{2}(\mathbb{R};H)}\\
 & =\langle(\ii m+\rho)^{\alpha}\mathcal{L}_{\rho}f,\sigma_{-1}(\ii m+\rho)^{-\alpha}\mathcal{L}_{\rho}\sigma_{-1}\varphi\rangle_{L^{2}(\mathbb{R};H)}\\
 & =\langle(\ii m+\rho)^{\alpha}\mathcal{L}_{\rho}f,\sigma_{-1}\mathcal{L}_{\rho}\partial_{0,\rho}^{-\alpha}\sigma_{-1}\varphi\rangle_{L^{2}(\mathbb{R};H)}\\
 & =\langle(\ii m+\rho)^{\alpha}\mathcal{L}_{\rho}f,\mathcal{L}_{\rho}\e^{2\rho m}\sigma_{-1}\partial_{0,\rho}^{-\alpha}\sigma_{-1}\varphi\rangle_{L^{2}(\mathbb{R};H)}\\
 & =\langle(\ii m+\rho)^{\alpha}\mathcal{L}_{\rho}f,(\ii m+\rho)^{\alpha}\mathcal{L}_{\rho}\partial_{0,\rho}^{-\alpha}\e^{2\rho m}\sigma_{-1}\partial_{0,\rho}^{-\alpha}\sigma_{-1}\varphi\rangle_{L^{2}(\mathbb{R};H)}\\
 & =\langle f,\partial_{0,\rho}^{-\alpha}\e^{2\rho m}\sigma_{-1}\partial_{0,\rho}^{-\alpha}\sigma_{-1}\varphi\rangle_{\rho,\alpha}
\end{align*}
for each $\varphi\in C_{c}^{\infty}(\mathbb{R};H).$ In particular,
in the case $\alpha=0$ we obtain
\[
\langle f,\varphi\rangle=\langle f,\e^{2\rho m}\varphi\rangle_{\rho,0}=\intop_{\mathbb{R}}\langle f(t),\varphi(t)\rangle_{H}\text{ d}t.\text{\tag*{\qedhere}}
\]
\end{proof}

\begin{rem}
The latter proposition shows that $\bigcup_{\rho\ne0,\alpha\in\mathbb{R}}H_{\rho}^{\alpha}(\mathbb{R};H)\subseteq\mathcal{D}(\mathbb{R};H)'.$
In particular, the support of an element in $H_{\rho}^{\alpha}(\mathbb{R};H)$
is then well-defined by
\[
\bigcap\{\R\setminus U; U\subseteq\R\;\mathrm{open},\; \forall\varphi\in C_c^\infty(U;H):\langle f,\varphi\rangle=0\},
\]
and the second part of the latter proposition
shows, that it coincides with the usual $L^{2}$-support if $\alpha\geq0.$
Moreover, we can now compare elements in $H_{\rho}^{\alpha}(\mathbb{R};H)$
and $H_{\mu}^{\beta}(\mathbb{R};H)$ by saying that those elements
are equal if they are equal as distributions. We shall further elaborate
on this matter in Proposition \ref{prop:char_distributions}. In particular,
we shall show that $f\mapsto\langle f,\cdot\rangle$ is injective.
We shall also mention that the notation $\langle f,\varphi\rangle$ is
justified, as it does \emph{not} depend on $\rho$ nor $\alpha$.
\end{rem}

\begin{example}
\label{exa:disder}Let $f\in L_{\rho}^{2}(\mathbb{R};H)$. Then, by
definition, $\partial_{0,\rho}f\in H_{\rho}^{-1}(\mathbb{R};H)$.
We shall compute the action of $\partial_{0,\rho}f$ as a distribution.
For this let $\varphi\in C_{c}^{\infty}(\mathbb{R};H)$ and we compute
with the formula outlined in Proposition \ref{prop:distributions}
for $\alpha=-1$:
\begin{align*}
\langle\partial_{0,\rho}f,\varphi\rangle & =\langle\partial_{0,\rho}f,\partial_{0,\rho}\e^{2\rho m}\sigma_{-1}\partial_{0,\rho}\sigma_{-1}\varphi\rangle_{\rho,-1}\\
 & =\langle(\ii m+\rho)^{-1}\mathcal{L}_{\rho}\partial_{0,\rho}f,(\ii m+\rho)^{-1}\mathcal{L}_{\rho}\partial_{0,\rho}\e^{2\rho m}\sigma_{-1}\partial_{0,\rho}\sigma_{-1}\varphi\rangle_{L^2(\R;H)}\\
 & =\langle\mathcal{L}_{\rho}f,\mathcal{L}_{\rho}\e^{2\rho m}\sigma_{-1}\partial_{0,\rho}\sigma_{-1}\varphi\rangle_{L^{2}(\mathbb{R};H)}\\
 & =-\langle\mathcal{L}_{\rho}f,\mathcal{L}_{\rho}\e^{2\rho m}\partial_{0,\rho}\varphi\rangle_{L^{2}(\mathbb{R};H)}\\
 & =-\langle f,\e^{2\rho m}\varphi'\rangle_{L_{\rho}^{2}(\mathbb{R};H)}\\
 & =-\int_{\mathbb{R}}\langle f(t),\varphi'(t)\rangle_{H}\ \text{d}t.
\end{align*}
Thus, $\partial_{0,\rho}f$ coincides with the distibutional derivative
of $L_{\rho}^{2}(\mathbb{R};H)$ functions.
\end{example}

\begin{lem}\label{lem:der_spt}
Let $\alpha \in \R$.
Let $H_\rho^\infty(\R;H)\coloneqq\bigcap_{k\in\N}H_\rho^k(\R;H)$ for $\rho \neq 0$.
\begin{enumerate}
        \item\label{lem:der_spt_a} Let $\varphi\in C_c^\infty(\R;H)$.
        For all $\rho>0$ we have $\partial_\rho^{\alpha}\varphi\in C^\infty(\R;H)\cap H_\rho^\infty(\R;H)$ and $\inf\spt\partial_\rho^\alpha\varphi \geq\inf\spt\varphi$.
        For $\rho,\mu>0$, $\alpha\in\R$ we have $\partial_{0,\rho}^\alpha\varphi = \partial_{0,\mu}^\alpha\varphi$.
        \item\label{lem:der_spt_b} Let $\alpha\in\R$ and $\mu,\rho\neq 0$.
        Let $\psi\in C^\infty(\R;H)\cap H_\rho^\infty(\R;H)\cap H_\mu^\infty(\R;H)$.
        Then there is $(\varphi_n)_{n\in\N}\in C_c^\infty(\R;H)^\N$ s.t. $\varphi_n\to\psi$ for $n\to\infty$ in $H_\rho^\alpha(\R;H)$ and $H_\mu^\alpha(\R;H)$
        and $\spt(\varphi_n)\subseteq \spt(\psi)$ for $n\in\N$.
\end{enumerate}
\end{lem}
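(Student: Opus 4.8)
The plan is to handle~\ref{lem:der_spt_a} with the convolution representation of Theorem~\ref{t:fracint} together with the functional calculus, and to handle~\ref{lem:der_spt_b} by a support-preserving cut-off whose convergence I reduce to integer differentiation order via the embedding in Lemma~\ref{lem:Sobolev_chain_fractional}(c). For \ref{lem:der_spt_a}, fix $\varphi\in C_c^\infty(\R;H)$ and $\rho>0$. Since $\e^{-\rho m}\varphi\in C_c^\infty(\R;H)\subseteq\mathcal S(\R;H)$, the transform $\mathcal L_\rho\varphi=\mathcal F(\e^{-\rho m}\varphi)$ lies in $\mathcal S(\R;H)$. For $\rho>0$ the symbol $t\mapsto(\i t+\rho)^\gamma$ is a smooth function all of whose derivatives are polynomially bounded (the branch point $-\rho$ being avoided), so the multiplier $(\i m+\rho)^\gamma$ maps $\mathcal S(\R;H)$ into itself for every $\gamma\in\R$; moreover $\mathcal L_\rho^{*}=\e^{\rho m}\mathcal F^{*}$ sends $\mathcal S(\R;H)$ into $C^\infty(\R;H)\cap\bigcap_k\dom(\partial_{0,\rho}^k)=C^\infty(\R;H)\cap H_\rho^\infty(\R;H)$, because $\partial_{0,\rho}^k\mathcal L_\rho^{*}\Phi=\mathcal L_\rho^{*}(\i m+\rho)^k\Phi\in\mathcal L_\rho^{*}[\mathcal S(\R;H)]\subseteq L_\rho^2(\R;H)$. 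Taking $\gamma=\alpha$ yields $\partial_{0,\rho}^\alpha\varphi=\mathcal L_\rho^{*}(\i m+\rho)^\alpha\mathcal L_\rho\varphi\in C^\infty(\R;H)\cap H_\rho^\infty(\R;H)$, which is the regularity assertion.

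For the support bound and the $\rho$-independence, fix an integer $n\geq\max\{0,\alpha\}$ and put $\beta\coloneqq n-\alpha\geq0$. The elementary identity $(\i t+\rho)^\alpha=(\i t+\rho)^n(\i t+\rho)^{-\beta}$ gives $\partial_{0,\rho}^\alpha\varphi=\partial_{0,\rho}^n\partial_{0,\rho}^{-\beta}\varphi$. By Theorem~\ref{t:fracint}, $\partial_{0,\rho}^{-\beta}\varphi=k_\beta*\varphi$ with $k_\beta(t)=\tfrac1{\Gamma(\beta)}t^{\beta-1}\chi_{\R_{>0}}(t)$ for $\beta>0$ (and $\partial_{0,\rho}^{0}\varphi=\varphi$ for $\beta=0$). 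Since $\partial_{0,\rho}^{-\beta}\varphi\in C^\infty(\R;H)\cap H_\rho^\infty(\R;H)$ by the previous step, Example~\ref{exa:disder} applied $n$ times identifies $\partial_{0,\rho}^n\partial_{0,\rho}^{-\beta}\varphi$ with the classical derivative, so $\partial_{0,\rho}^\alpha\varphi=(k_\beta*\varphi)^{(n)}$. Neither $k_\beta$ nor the classical derivative depends on $\rho$, whence $\partial_{0,\rho}^\alpha\varphi=\partial_{0,\mu}^\alpha\varphi$ for all $\rho,\mu>0$; and as $\inf\spt k_\beta=0$, the convolution bound $\inf\spt(k_\beta*\varphi)\geq\inf\spt k_\beta+\inf\spt\varphi=\inf\spt\varphi$ together with $\spt w^{(n)}\subseteq\spt w$ gives $\inf\spt\partial_{0,\rho}^\alpha\varphi\geq\inf\spt\varphi$.

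For \ref{lem:der_spt_b}, fix $\eta\in C_c^\infty(\R)$ with $0\leq\eta\leq1$, $\eta=1$ on $[-1,1]$ and $\spt\eta\subseteq[-2,2]$, set $\eta_n\coloneqq\eta(\cdot/n)$ and $\varphi_n\coloneqq\eta_n\psi$. Then $\varphi_n\in C_c^\infty(\R;H)$ and $\spt\varphi_n\subseteq\spt\eta_n\cap\spt\psi\subseteq\spt\psi$, so the support requirement is automatic and it remains to prove $\varphi_n\to\psi$ in $H_\rho^\alpha(\R;H)$ and in $H_\mu^\alpha(\R;H)$. Fix a non-negative integer $k>\alpha$; by Lemma~\ref{lem:Sobolev_chain_fractional}(c) the embedding $H_\rho^k(\R;H)\hookrightarrow H_\rho^\alpha(\R;H)$ is continuous, so it suffices to show $\varphi_n\to\psi$ in $H_\rho^k(\R;H)$. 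A binomial expansion of $(t^2+\rho^2)^k$ yields $\|w\|_{\rho,k}^2\leq C_{k,\rho}\sum_{j=0}^k\|\partial_{0,\rho}^jw\|_{L_\rho^2(\R;H)}^2$, and since $\varphi_n-\psi$ is smooth and lies in $H_\rho^k(\R;H)$ its weak derivatives are classical; hence it is enough to prove $\|(\eta_n\psi-\psi)^{(j)}\|_{L_\rho^2(\R;H)}\to0$ for $0\leq j\leq k$. By the Leibniz rule $(\eta_n\psi)^{(j)}=\eta_n\psi^{(j)}+\sum_{i=1}^j\binom ji\eta_n^{(i)}\psi^{(j-i)}$: the first term tends to $\psi^{(j)}$ in $L_\rho^2(\R;H)$ by dominated convergence (using $\psi^{(j)}\in L_\rho^2(\R;H)$, which follows from $\psi\in C^\infty(\R;H)\cap H_\rho^\infty(\R;H)$, together with $\eta_n\to1$ pointwise and $0\leq\eta_n\leq1$), while each remaining term obeys $\|\eta_n^{(i)}\psi^{(j-i)}\|_{L_\rho^2(\R;H)}^2\leq\|\eta^{(i)}\|_\infty^2\int_{|t|\geq n}\|\psi^{(j-i)}(t)\|_H^2\e^{-2\rho t}\,\mathrm dt\to0$, since $\eta_n^{(i)}$ is supported in $\{n\leq|t|\leq2n\}$ and bounded there by $\|\eta^{(i)}\|_\infty$ for $i\geq1$, $n\geq1$, and $\psi^{(j-i)}\in L_\rho^2(\R;H)$.

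Running the identical argument with $\mu$ in place of $\rho$ (and the same $\eta_n$) gives convergence in $H_\mu^\alpha(\R;H)$. The only genuinely delicate point is that $\alpha$ is an arbitrary real number, so for $\alpha<0$ the spaces $H_\rho^\alpha(\R;H)$ are extrapolation spaces defined by completion, while the two weights $\rho,\mu$ may even have opposite signs. The decisive simplification is the reduction to integer order through the continuous embedding $H_\rho^k(\R;H)\hookrightarrow H_\rho^\alpha(\R;H)$, which transports the whole question into the ordinary weighted Sobolev setting where the Leibniz and tail estimates are elementary; the two-weight requirement then causes no extra work, because the cut-off $\eta_n$ is symmetric and every estimate uses only that $\psi^{(j)}\in L_\rho^2(\R;H)\cap L_\mu^2(\R;H)$.
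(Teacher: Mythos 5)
Your proof is correct and takes essentially the same route as the paper's: in part \ref{lem:der_spt_a} you factor $\partial_{0,\rho}^{\alpha}$ into an integer-order classical derivative and a negative-order part which Theorem \ref{t:fracint} identifies with a $\rho$-independent causal convolution, and in part \ref{lem:der_spt_b} you multiply by a support-preserving cut-off family and reduce convergence to integer order $k$ via the continuous embedding of Lemma \ref{lem:Sobolev_chain_fractional}(c), the Leibniz rule and dominated convergence. The differences are cosmetic only: the paper factors as $\partial_{0,\rho}^{\alpha-\lceil\alpha\rceil}\partial_{0,\rho}^{\lceil\alpha\rceil}$ rather than your $\partial_{0,\rho}^{n}\partial_{0,\rho}^{-(n-\alpha)}$, is terser about the smoothness claim you justify with an explicit Schwartz-space calculus, and exploits that $\|\cdot\|_{\rho,k}=\|\partial_{0,\rho}^{k}\cdot\|_{L_\rho^2(\R;H)}$ so that only the $k$-th derivative needs to converge, where you estimate all orders $j\leq k$.
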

\begin{proof}
\ref{lem:der_spt_a}:
Let $\alpha\in\R$.
Let $\mu,\rho>0$.
For $\alpha>0$ it holds that $\partial_\rho^\alpha = \partial_\rho^{\alpha-\lceil\alpha\rceil}\partial_\rho^{\lceil\alpha\rceil}$ and $\partial_\rho^{\lceil\alpha\rceil}\varphi=\varphi^{(\lceil\alpha\rceil)}=\partial_\mu^{\lceil\alpha\rceil}\varphi\in C_c^\infty(\R;H)$ and $\alpha-\lceil\alpha\rceil < 0$.
Thus we may assume that $\alpha<0$.
By Theorem \ref{t:fracint} we have $\partial_{0,\rho}^{\alpha}\varphi = \partial_{0,\mu}^{\alpha}\varphi$ and $\inf\spt\partial_{0,\rho}^{\alpha}\varphi > -\infty$.
From $\varphi\in H_\rho^\infty(\R;H)$ we deduce $\partial_{0,\rho}^{\alpha}\varphi\in H_\rho^\infty(\R;H)$.
\\
\ref{lem:der_spt_b}:
Let $k\in\mathbb{N}$ with $k\geq\alpha$.
We choose a sequence $\left(\chi_{n}\right)_{n\in\mathbb{N}}$ in $C_{c}^{\infty}(\mathbb{R})$ such that $\spt\chi_{n}\subseteq[-n-1,n+1]$, $\chi_{n}=1$ on $[-n,n]$ and
\begin{equation*}
\sup\left\{ \|\chi_{n}^{(j)}\|_{\infty}\,;\,j\in\{0,\ldots,k\},n\in\mathbb{N}\right\} <\infty.
\end{equation*}
Set $\varphi_n\coloneqq\chi_n\psi \in C_c^\infty(\R;H)$.
Then $\spt(\varphi_n)\subseteq\spt(\psi)$ for $n\in\N$.
Since $H_\nu^k(\R;H)$ is dense and continuously embedded into $H_\nu^\alpha(\R;H)$ ($\nu\neq 0$),
it suffices to show that $\varphi_n\to\psi$ ($n\to\infty$) in $H_\rho^k(\R;H)$ and $H_\mu^k(\R;H)$.
Indeed, by the product rule, the choice of $\chi_n$ and dominated convergence we obtain
\begin{align*}
        \varphi_n^{(k)} = \sum_{j=0}^k \chi_n^{(k-j)}\psi^{(j)} = \chi_n\psi^{(k)} + \sum_{j=0}^{k-1} \chi_n^{(k-j)}\psi^{(j)} \to \psi^{(k)}
\end{align*}
for $n\to\infty$ in $L_\rho^2(\R;H)$ and in $L_\mu^2(\R;H)$.
\end{proof}

\begin{lem}
\label{lem:testfunctions_dense}Let $\rho\ne0$ and $\alpha\in\mathbb{R}.$
Then $C_{c}^{\infty}(\mathbb{R};H)$ is dense in $H_{\rho}^{\alpha}(\mathbb{R};H).$
\end{lem}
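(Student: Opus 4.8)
The plan is to reduce the statement to the concrete approximation provided by Lemma~\ref{lem:der_spt} by means of the unitary operator $\mathcal{L}_\rho$. Recall from Lemma~\ref{lem:Sobolev_chain_fractional}(b) that $\mathcal{L}_\rho\colon H_\rho^\alpha(\mathbb{R};H)\to H^\alpha(\i m+\rho)$ is unitary, and that $C_c^\infty(\mathbb{R};H)$ lies dense in $H^\alpha(\i m+\rho)$ (as noted in the proof of Lemma~\ref{lem:Sobolev_chain_mult}). Consequently $\mathcal{L}_\rho^\ast[C_c^\infty(\mathbb{R};H)]$ is dense in $H_\rho^\alpha(\mathbb{R};H)$, and it suffices to approximate each single element $\psi\coloneqq\mathcal{L}_\rho^\ast\varphi$ with $\varphi\in C_c^\infty(\mathbb{R};H)$ by genuine test functions in the $H_\rho^\alpha$-norm; density of $C_c^\infty(\mathbb{R};H)$ then follows by the usual ``dense in a dense set'' argument.

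First I would check that $\psi=\mathcal{L}_\rho^\ast\varphi$ is a \emph{smooth} function lying in $H_\rho^\infty(\mathbb{R};H)$, so that Lemma~\ref{lem:der_spt} becomes applicable. Writing $\mathcal{L}_\rho^\ast=\e^{\rho m}\mathcal{F}^\ast$ and using $\varphi\in C_c^\infty(\mathbb{R};H)\subseteq\mathcal{S}(\mathbb{R};H)$, we have $\mathcal{F}^\ast\varphi\in\mathcal{S}(\mathbb{R};H)$, hence $\psi=\e^{\rho m}\mathcal{F}^\ast\varphi\in C^\infty(\mathbb{R};H)$. Moreover $\mathcal{L}_\rho\psi=\varphi\in C_c^\infty(\mathbb{R};H)\subseteq H^\gamma(\i m+\rho)$ for every $\gamma\in\mathbb{R}$, so that $\psi\in W_\rho^k(\mathbb{R};H)=H_\rho^k(\mathbb{R};H)$ for all $k\in\mathbb{N}$, i.e.\ $\psi\in\bigcap_{k\in\mathbb{N}}H_\rho^k(\mathbb{R};H)=H_\rho^\infty(\mathbb{R};H)$.

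With $\psi\in C^\infty(\mathbb{R};H)\cap H_\rho^\infty(\mathbb{R};H)$ at hand, I would invoke Lemma~\ref{lem:der_spt}\ref{lem:der_spt_b} with the choice $\mu=\rho$. This produces a sequence $(\varphi_n)_{n\in\mathbb{N}}$ in $C_c^\infty(\mathbb{R};H)$ with $\varphi_n\to\psi$ in $H_\rho^\alpha(\mathbb{R};H)$ as $n\to\infty$. Chaining the two approximations, an arbitrary $f\in H_\rho^\alpha(\mathbb{R};H)$ is a limit of elements of $\mathcal{L}_\rho^\ast[C_c^\infty(\mathbb{R};H)]$, each of which is itself a limit of test functions, and therefore $C_c^\infty(\mathbb{R};H)$ is dense in $H_\rho^\alpha(\mathbb{R};H)$.

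The only genuine obstacle is that for $\alpha<0$ the elements of $H_\rho^\alpha(\mathbb{R};H)$ are not ordinary functions but points of an abstract completion, so one cannot mollify directly inside the space. The two-step scheme is exactly what circumvents this: the unitary $\mathcal{L}_\rho$ transports the problem to the concrete smooth representatives $\mathcal{L}_\rho^\ast\varphi$, and the cutoff construction of Lemma~\ref{lem:der_spt}\ref{lem:der_spt_b} then supplies compactly supported approximants while keeping the $H_\rho^\alpha$-norm under control. I expect the bookkeeping in the second paragraph---verifying smoothness of $\mathcal{L}_\rho^\ast\varphi$ and its membership in $H_\rho^\infty(\mathbb{R};H)$---to be the most delicate point, although it is routine once the identity $\mathcal{L}_\rho^\ast=\e^{\rho m}\mathcal{F}^\ast$ and the mapping properties of $\mathcal{F}^\ast$ on $\mathcal{S}(\mathbb{R};H)$ are used.
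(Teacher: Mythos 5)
Your proof is correct, but it takes a different route from the paper's. The paper stays on the time side: it first reduces to $\rho>0$ via the reflection $\sigma_{-1}$ of Lemma \ref{lem:sigma-1_unitary}, approximates $\partial_{0,\rho}^{\alpha}f$ by test functions $\psi_n$ in $L_\rho^2(\mathbb{R};H)$, pulls back through the unitary $\partial_{0,\rho}^{-\alpha}$, and then needs Lemma \ref{lem:der_spt}\ref{lem:der_spt_a} --- which rests on the convolution formula of Theorem \ref{t:fracint} and is only available for positive weight, hence the reduction --- to know that $\partial_{0,\rho}^{-\alpha}\psi_n\in C^\infty(\mathbb{R};H)\cap H_\rho^\infty(\mathbb{R};H)$ before applying the cutoff construction of Lemma \ref{lem:der_spt}\ref{lem:der_spt_b}. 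You instead transport through $\mathcal{L}_\rho$: you approximate $\mathcal{L}_\rho f$ by test functions in $H^\alpha(\i m+\rho)$ (using the density the paper asserts in the proof of Lemma \ref{lem:Sobolev_chain_mult}) and verify the regularity of the pullback $\mathcal{L}_\rho^\ast\varphi=\e^{\rho m}\mathcal{F}^\ast\varphi$ directly from the mapping properties of $\mathcal{F}^\ast$ on $\mathcal{S}(\mathbb{R};H)$, which is a correct and self-contained replacement for Lemma \ref{lem:der_spt}\ref{lem:der_spt_a}; both arguments then converge on Lemma \ref{lem:der_spt}\ref{lem:der_spt_b} (you with $\mu=\rho$) for the truncation. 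What your route buys: it treats $\rho>0$ and $\rho<0$ uniformly, with no $\sigma_{-1}$ reduction and no appeal to the causal convolution representation, and it exhibits the smooth dense class explicitly as $\mathcal{L}_\rho^\ast[C_c^\infty(\mathbb{R};H)]$. What the paper's route buys: its smooth approximants $\partial_{0,\rho}^{-\alpha}\psi_n$ additionally inherit supports bounded below via Lemma \ref{lem:der_spt}\ref{lem:der_spt_a}, a pattern the paper reuses verbatim in the support-preserving approximation of Lemma \ref{lem:fracsupport}, where your Fourier-side argument would not serve, since $\mathcal{L}_\rho^\ast$ destroys support information. For the density statement alone, both proofs are complete and of comparable length.
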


\begin{proof}
We first note that it suffices to prove the assertion for $\rho>0,$
since the operator $\sigma_{-1}$ from Lemma \ref{lem:sigma-1_unitary}
leaves $C_{c}^{\infty}(\mathbb{R};H)$ invariant.
It is well known that $C_c^\infty(\R;H)$ is dense in $L_\rho^2(\R;H)$.
We have $\partial_{0,\rho}^\alpha f\in L_\rho^2(\R;H)$.
Let $(\psi_n)_{n\in\N}\in C_c^\infty(\R;H)^\N$ with $\psi_n\to \partial_{0,\rho}^\alpha f$ ($n\to\infty$) in $L_\rho^2(\R;H)$.
By Lemma \ref{lem:der_spt}\ref{lem:der_spt_a} we have $\partial_{0,\rho}^{-\alpha}\psi_n\in C^\infty(\R;H)\cap H_\rho^\infty(\R;H)$
and by Lemma \ref{lem:der_spt}\ref{lem:der_spt_b} we find $(\varphi_n)_{n\in\N}\in C_c^\infty(\R;H)^\N$ with $\norm{\partial_{0,\rho}^{-\alpha}\psi_n-\varphi_n}_{\rho, \alpha}\to 0$ ($n\to\infty$).
Then
\[
\norm{f-\varphi_n}_{\rho,\alpha}\leq \norm{f-\partial_{0,\rho}^{-\alpha}\psi_n}_{\rho, \alpha}
+\norm{\partial_{0,\rho}^{-\alpha}\psi_n-\varphi_n}_{\rho,\alpha}\to 0\qquad (n\to\infty).\qedhere
\]
\end{proof}

With this result at hand, we can characterize those distributions,
which belong to $H_{\rho}^{\alpha}(\mathbb{R};H)$ for some $\alpha\in\mathbb{R},\rho\ne0$,
in the following way.
\begin{prop}
\label{prop:char_distributions}Let $\psi\in\mathcal{D}(\mathbb{R};H)'$
and $\alpha\in\mathbb{R},\rho\ne0.$ Then, there exists $f\in H_{\rho}^{\alpha}(\mathbb{R};H)$
such that
\[
\psi(\varphi)=\langle f,\varphi\rangle\quad(\varphi\in C_{c}^{\infty}(\mathbb{R};H))
\]
in the sense of Propositions \ref{prop:distributions} if and only
if there is $C\geq0$ such that
\[
|\psi(\varphi)|\leq C\|\varphi\|_{-\rho,-\alpha}
\]
for each $\varphi\in C_{c}^{\infty}(\mathbb{R};H).$
\end{prop}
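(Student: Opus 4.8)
The plan is to read the statement as a duality result: the estimate $|\psi(\varphi)|\le C\|\varphi\|_{-\rho,-\alpha}$ says exactly that $\psi$ is continuous with respect to the $H_{-\rho}^{-\alpha}(\R;H)$\textendash norm on test functions, and the claim is that these are precisely the functionals of the form $\langle f,\cdot\rangle$ with $f\in H_\rho^\alpha(\R;H)$. The forward implication is a Cauchy\textendash Schwarz estimate, while the converse is a density-plus-Riesz argument built on the unitary operator already produced in Proposition \ref{prop:distributions}.

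For the ``only if'' direction, suppose $\psi=\langle f,\cdot\rangle$ with $f\in H_\rho^\alpha(\R;H)$. In the defining integral $\langle f,\varphi\rangle=\int_\R\langle \mathcal L_\rho f(t),\mathcal L_{-\rho}\varphi(t)\rangle_H\,\mathrm dt$ I would insert the factor $(\i t+\rho)^\alpha(\i t+\rho)^{-\alpha}=1$, distributing the two powers onto the two slots of the inner product, and then apply the Cauchy\textendash Schwarz inequality in $L^2(\R;H)$. Because $|\i t+\rho|=|\i t-\rho|=(t^2+\rho^2)^{1/2}$, the first factor is $\|(\i m+\rho)^\alpha\mathcal L_\rho f\|_{L^2}=\|f\|_{\rho,\alpha}$ and the second is $\|(\i m-\rho)^{-\alpha}\mathcal L_{-\rho}\varphi\|_{L^2}=\|\varphi\|_{-\rho,-\alpha}$, yielding $|\psi(\varphi)|\le\|f\|_{\rho,\alpha}\|\varphi\|_{-\rho,-\alpha}$, i.e.\ the required bound with $C=\|f\|_{\rho,\alpha}$.

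For the ``if'' direction, I would use the unitary $T\coloneqq\partial_{0,\rho}^{-\alpha}\e^{2\rho m}\sigma_{-1}\partial_{0,\rho}^{-\alpha}\sigma_{-1}\colon H_{-\rho}^{-\alpha}(\R;H)\to H_\rho^\alpha(\R;H)$ from Proposition \ref{prop:distributions}, which satisfies $\langle f,\varphi\rangle=\langle f,T\varphi\rangle_{\rho,\alpha}$ and hence, by unitarity, $\langle f,\varphi\rangle=\langle T^{-1}f,\varphi\rangle_{-\rho,-\alpha}$ for $f\in H_\rho^\alpha(\R;H)$ and $\varphi\in C_c^\infty(\R;H)$. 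By Lemma \ref{lem:testfunctions_dense}, applied with the parameters $-\rho$ and $-\alpha$, $C_c^\infty(\R;H)$ is dense in $H_{-\rho}^{-\alpha}(\R;H)$, so the hypothesis lets $\psi$ extend uniquely to a continuous functional $\bar\psi$ on $H_{-\rho}^{-\alpha}(\R;H)$. The Riesz representation theorem then yields $w\in H_{-\rho}^{-\alpha}(\R;H)$ with $\bar\psi=\langle w,\cdot\rangle_{-\rho,-\alpha}$, and setting $f\coloneqq Tw\in H_\rho^\alpha(\R;H)$ gives $\langle f,\varphi\rangle=\langle T^{-1}f,\varphi\rangle_{-\rho,-\alpha}=\langle w,\varphi\rangle_{-\rho,-\alpha}=\psi(\varphi)$ for every $\varphi\in C_c^\infty(\R;H)$, as desired.

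The Cauchy\textendash Schwarz estimate and the extension step are routine; the substantive observation is that Proposition \ref{prop:distributions} has already repackaged the distributional pairing as a Hilbert\textendash space inner product through the unitary $T$, after which Riesz applies verbatim. The point requiring genuine care is the bookkeeping of the (conjugate\textendash)linearity convention: one must ensure that $\psi$, its extension $\bar\psi$, the Riesz representative $w$, and the pairing $\langle f,\cdot\rangle$ all carry the same linearity in $\varphi$, so that the concluding identity holds exactly rather than up to a complex conjugation.
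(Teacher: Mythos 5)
Your proof is correct and takes essentially the same route as the paper: the identical weight-splitting Cauchy--Schwarz estimate for necessity, and for sufficiency the same combination of the unitary operator $\partial_{0,\rho}^{-\alpha}\e^{2\rho m}\sigma_{-1}\partial_{0,\rho}^{-\alpha}\sigma_{-1}$ from Proposition \ref{prop:distributions}, density of $C_c^\infty(\mathbb{R};H)$ (Lemma \ref{lem:testfunctions_dense}), and the Riesz representation theorem. The only immaterial difference is that you extend $\psi$ and apply Riesz in $H_{-\rho}^{-\alpha}(\mathbb{R};H)$ and then transport the representative through $T$, whereas the paper precomposes $\psi$ with $A^{-1}$ and applies Riesz directly in $H_{\rho}^{\alpha}(\mathbb{R};H)$.
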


\begin{proof}
Assume first that there is $f\in H_{\rho}^{\alpha}(\mathbb{R};H)$
representing $\psi.$ Then we estimate
\begin{align*}
|\psi(\varphi)| & =|\langle f,\varphi\rangle|\\
 & =\left|\intop_{\mathbb{R}}\langle\mathcal{L}_{\rho}f(t),\mathcal{L}_{-\rho}\varphi(t)\rangle_{H}\text{ d}t\right|\\
 & =\left|\intop_{\mathbb{R}}\langle(\i t+\rho)^{\alpha}\mathcal{L}_{\rho}f(t),(-\i t+\rho)^{-\alpha}\mathcal{L}_{-\rho}\varphi(t)\rangle_{H}\text{ d}t\right|\\
 & \leq\|\mathcal{L}_{\rho}f\|_{H^{\alpha}(\i m+\rho)}\|\mathcal{L}_{-\rho}\varphi\|_{H^{-\alpha}(\i m-\rho)}\\
 & =\|f\|_{\rho,\alpha}\|\varphi\|_{-\rho,-\alpha}
\end{align*}
for each $\varphi\in C_{c}^{\infty}(\mathbb{R};H).$
Let $C \geq 0$ such that $\psi$ satisfies for $\varphi \in C_c^\infty(\R; H)$
\begin{equation*}
        \abs{\psi(\varphi)} \leq C \norm{\varphi}_{-\rho, -\alpha}.
\end{equation*}
The operator
$A \coloneqq \partial_{0, \rho}^{-\alpha} \e^{2\rho m} \sigma_{-1} \partial_{0, \rho}^{-\alpha} \sigma_{-1}: H_{-\rho}^{-\alpha}(\R; H) \to H_\rho^\alpha(\R; H)$
(cf.\ Proposition \ref{prop:distributions}) is unitary.
Thus for $\varphi \in C_c^\infty(\R; H)$
\begin{equation*}
        \abs{\psi(A^{-1}\varphi)} \leq C \norm{A^{-1} \varphi}_{-\rho, -\alpha} = C \norm{\varphi}_{\rho, \alpha}.
\end{equation*}
Moreover, $C_c^\infty(\R; H) \subseteq H_\rho^\alpha(\R; H)$ is dense.
Thus $\psi(A^{-1} \cdot)$ can be extended continuously to $H_\rho^\alpha(\R; H)$.
By the Riesz representation theorem, there is a $f \in H_\rho^\alpha(\R; H)$ such that for $\varphi \in H_\rho^\alpha(\R; H)$
\begin{equation*}
        \psi(A^{-1}\varphi) = \left<f, \varphi\right>_{\rho, \alpha}.
\end{equation*}
By Theorem \ref{prop:distributions} we have for $\varphi \in C_c^\infty(\R; H)$
\begin{equation*}
        \psi(\varphi) = \psi(A^{-1} A \varphi) = \left<f, A\varphi\right>_{\rho, \alpha} = \left<f, \varphi\right>.\qedhere
\end{equation*}
\end{proof}

In the next proposition, we shall also obtain the announced uniqueness
statement, that is, the injectivity of the mapping $f\mapsto\langle f,\cdot\rangle$.
\begin{prop}
\label{prop:independence}Let $\alpha\in\mathbb{R}$ and $\mu,\rho>0.$
Moreover, let $f\in H_{\rho}^{\alpha}(\mathbb{R};H)$ and $g\in H_{\mu}^{\alpha}(\mathbb{R};H).$
Then the following statements are equivalent:

\begin{enumerate}

\item[(i)] $f=g$ in the sense of distributions, i.e., for each $\varphi\in C_{c}^{\infty}(\mathbb{R};H)$
we have that
\[
\intop_{\mathbb{R}}\langle\mathcal{L}_{\rho}f(t),\mathcal{L}_{-\rho}\varphi(t)\rangle_{H}\,\mathrm{d}t=\intop_{\mathbb{R}}\langle\mathcal{L}_{\mu}g(t),\mathcal{L}_{-\mu}\varphi(t)\rangle_{H}\,\mathrm{d}t.
\]

\item[(ii)] $\partial_{0,\rho}^{\alpha}f=\partial_{0,\mu}^{\alpha}g$
as functions in $L^1_{\mathrm{loc}}(\mathbb{R};H)$.

\item[(iii)] There is a sequence $(\varphi_{n})_{n\in\mathbb{N}}$
in $C_{c}^{\infty}(\mathbb{R};H)$ with $\varphi_{n}\to f$ in $H_{\rho}^{\alpha}(\mathbb{R};H)$
and $\varphi_{n}\to g$ in $H_{\mu}^{\alpha}(\mathbb{R};H)$ as $n\to\infty$.

\end{enumerate}
\end{prop}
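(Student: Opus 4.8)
The plan is to prove the cycle (i) $\Rightarrow$ (ii) $\Rightarrow$ (iii) $\Rightarrow$ (i), the last two implications being routine and the first carrying the real content. Throughout write $F\coloneqq\partial_{0,\rho}^{\alpha}f\in L_{\rho}^{2}(\R;H)$ and $G\coloneqq\partial_{0,\mu}^{\alpha}g\in L_{\mu}^{2}(\R;H)$; these are honest locally integrable functions since $\partial_{0,\nu}^{\alpha}\colon H_{\nu}^{\alpha}(\R;H)\to L_{\nu}^{2}(\R;H)$ is unitary by Lemma \ref{lem:Sobolev_chain_fractional}. The backbone is the identity, read off from the computation inside the proof of Proposition \ref{prop:distributions},
\[
\langle f,\varphi\rangle=\intop_{\R}\langle F(t),\Phi_{\varphi}(t)\rangle_{H}\,\mathrm{d}t,\qquad \Phi_{\varphi}\coloneqq\sigma_{-1}\partial_{0,\rho}^{-\alpha}\sigma_{-1}\varphi,
\]
together with the fact from Lemma \ref{lem:der_spt}\ref{lem:der_spt_a} that on test functions $\partial_{0,\rho}^{-\alpha}$ and $\partial_{0,\mu}^{-\alpha}$ coincide, so that $\Phi_{\varphi}$ is independent of the weight. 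The map $\varphi\mapsto\Phi_{\varphi}$ is a composition of $\sigma_{-1}$ and $\partial_{0,\rho}^{-\alpha}$ (all unitary by Lemmas \ref{lem:sigma-1_unitary} and \ref{lem:Sobolev_chain_fractional}), hence extends to a unitary $H_{-\rho}^{-\alpha}(\R;H)\to L_{-\rho}^{2}(\R;H)$; since $F\in L_{\rho}^{2}$, the right-hand side is continuous in $\Phi_{\varphi}\in L_{-\rho}^{2}$, so the displayed identity persists for all $\varphi\in H_{-\rho}^{-\alpha}(\R;H)$.

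For (i) $\Rightarrow$ (ii) I would fix an arbitrary $\theta\in C_{c}^{\infty}(\R;H)$ and put $\varphi_{\theta}\coloneqq\sigma_{-1}\partial_{0,\rho}^{\alpha}\sigma_{-1}\theta$, so that $\Phi_{\varphi_{\theta}}=\theta$ by $\partial_{0,\rho}^{-\alpha}\partial_{0,\rho}^{\alpha}=\mathrm{id}$ and $\sigma_{-1}^{2}=\mathrm{id}$. Weight independence (Lemma \ref{lem:der_spt}\ref{lem:der_spt_a}) gives $\varphi_{\theta}\in C^{\infty}(\R;H)\cap H_{-\rho}^{\infty}(\R;H)\cap H_{-\mu}^{\infty}(\R;H)$, so Lemma \ref{lem:der_spt}\ref{lem:der_spt_b} produces $\varphi_{n}\in C_{c}^{\infty}(\R;H)$ with $\varphi_{n}\to\varphi_{\theta}$ \emph{simultaneously} in $H_{-\rho}^{-\alpha}(\R;H)$ and $H_{-\mu}^{-\alpha}(\R;H)$. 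Using that $\langle f,\cdot\rangle$ is $\|\cdot\|_{-\rho,-\alpha}$-continuous and $\langle g,\cdot\rangle$ is $\|\cdot\|_{-\mu,-\alpha}$-continuous (the estimate opening the proof of Proposition \ref{prop:char_distributions}), that $\langle f,\varphi_{n}\rangle=\langle g,\varphi_{n}\rangle$ by (i), and the extended identity of the previous paragraph, I would pass to the limit to obtain
\[
\intop_{\R}\langle F(t),\theta(t)\rangle_{H}\,\mathrm{d}t=\langle f,\varphi_{\theta}\rangle=\langle g,\varphi_{\theta}\rangle=\intop_{\R}\langle G(t),\theta(t)\rangle_{H}\,\mathrm{d}t.
\]
Since $\theta$ is arbitrary, the vector-valued fundamental lemma yields $F=G$ almost everywhere, i.e.\ (ii).

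For (ii) $\Rightarrow$ (iii) I would start from $h\coloneqq F=G\in L_{\rho}^{2}(\R;H)\cap L_{\mu}^{2}(\R;H)$ and first approximate $h$ by $\psi_{n}\in C_{c}^{\infty}(\R;H)$ converging to $h$ simultaneously in $L_{\rho}^{2}$ and $L_{\mu}^{2}$ (cut off by a smooth plateau function and mollify; on a fixed compact support the two weighted norms are equivalent to the Lebesgue norm). Then $\partial_{0,\rho}^{-\alpha}\psi_{n}=\partial_{0,\mu}^{-\alpha}\psi_{n}\in C^{\infty}(\R;H)\cap H_{\rho}^{\infty}(\R;H)\cap H_{\mu}^{\infty}(\R;H)$ by Lemma \ref{lem:der_spt}\ref{lem:der_spt_a}, while $\partial_{0,\rho}^{-\alpha}\psi_{n}\to\partial_{0,\rho}^{-\alpha}h=f$ in $H_{\rho}^{\alpha}$ and $\partial_{0,\mu}^{-\alpha}\psi_{n}\to\partial_{0,\mu}^{-\alpha}h=g$ in $H_{\mu}^{\alpha}$; a further application of Lemma \ref{lem:der_spt}\ref{lem:der_spt_b} and a diagonal choice then deliver genuine test functions converging to $f$ in $H_{\rho}^{\alpha}$ and to $g$ in $H_{\mu}^{\alpha}$, which is (iii). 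Finally (iii) $\Rightarrow$ (i) is immediate: for $\varphi_{n}\to f$ in $H_{\rho}^{\alpha}$, $\varphi_{n}\to g$ in $H_{\mu}^{\alpha}$ and fixed $\phi\in C_{c}^{\infty}(\R;H)$, continuity of the pairing on each space gives $\langle f,\phi\rangle=\lim_{n}\langle\varphi_{n},\phi\rangle=\langle g,\phi\rangle$, the value $\langle\varphi_{n},\phi\rangle=\intop_{\R}\langle\varphi_{n},\phi\rangle_{H}$ being unambiguous for test functions.

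The main obstacle is (i) $\Rightarrow$ (ii): the hypothesis only compares the distributions generated by $f$ and $g$ on the common space $C_{c}^{\infty}(\R;H)$, whereas $f$ and $g$ live in the \emph{different} weighted spaces $H_{\rho}^{\alpha}$ and $H_{\mu}^{\alpha}$. The dual object $\varphi_{\theta}=\sigma_{-1}\partial_{0,\rho}^{\alpha}\sigma_{-1}\theta$ needed to isolate $F$ (resp.\ $G$) is smooth but not compactly supported, so one cannot simply test with it; the decisive point is that Lemma \ref{lem:der_spt} both renders $\varphi_{\theta}$ weight-independent and lets it be reached by test functions simultaneously in the two norms $\|\cdot\|_{-\rho,-\alpha}$ and $\|\cdot\|_{-\mu,-\alpha}$, which is exactly what allows the single hypothesis $\langle f,\varphi_{n}\rangle=\langle g,\varphi_{n}\rangle$ to bridge the two weighted settings.
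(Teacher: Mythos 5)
Your proposal is correct and follows essentially the same route as the paper's proof: the same cycle (i)$\Rightarrow$(ii)$\Rightarrow$(iii)$\Rightarrow$(i), the same dual object $\sigma_{-1}\partial_{0,\rho}^{\alpha}\sigma_{-1}\theta$ made weight-independent by Lemma \ref{lem:der_spt}\ref{lem:der_spt_a} and reached simultaneously in both weighted norms via Lemma \ref{lem:der_spt}\ref{lem:der_spt_b}, and the same use of the representation formula of Proposition \ref{prop:distributions} with the unitarity of $\partial_{0,\rho}^{\alpha}$ and $\sigma_{-1}$. The only cosmetic deviation is in (ii)$\Rightarrow$(iii), where your plateau-cutoff-and-mollify construction of $\psi_n$ (using equivalence of the weighted norms on compacta) replaces the paper's explicit choice with the decay rate $\tfrac{1}{n}\e^{(\rho-\mu)n}$; both achieve the same simultaneous $L_\rho^2$/$L_\mu^2$ approximation.
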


\begin{proof}
(i)$\Rightarrow$(ii):
Let $\psi\in C_c^\infty(\R;H)$ and $\tilde\psi \coloneqq \sigma_{-1}\partial_\rho^\alpha\sigma_{-1}\psi = \sigma_{-1}\partial_\mu^\alpha\sigma_{-1}\psi$.
Then by Lemma \ref{lem:der_spt}\ref{lem:der_spt_a} $\tilde\psi\in C^\infty(\R;H)\cap H_{-\rho}^\infty(\R;H)\cap H_{-\mu}^\infty(\R;H)$.
By Lemma \ref{lem:der_spt}\ref{lem:der_spt_b} there's $(\varphi_n)_{n\in\N}\in C_c^\infty(\R;H)^\N$ with $\varphi_n\to\tilde\psi$ ($n\to\infty$) in $H_{-\rho}^{-\alpha}(\R;H)$
and in $H_{-\mu}^{-\alpha}(\R;H)$.
Thus
\begin{align*}
        \int_\R\langle\partial_{0,\rho}^\alpha f(t),\psi(t)\rangle_H \d t &= \langle\partial_{0,\rho}^{\alpha}f,\e^{2\rho m}\psi\rangle_{\rho,0}
        \\
        &= \langle f,\partial_{0,\rho}^{-\alpha}\e^{2\rho m}\sigma_{-1}\partial_{0,\rho}^{-\alpha}\sigma_{-1}(\sigma_{-1}\partial_{0,\rho}^{\alpha}\sigma_{-1}\psi)\rangle_{\rho,\alpha}
        \\
        &= \lim_{n\to\infty} \langle f,\partial_{0,\rho}^{-\alpha}\e^{2\rho m}\sigma_{-1}\partial_{0,\rho}^{-\alpha}\sigma_{-1}\varphi_n\rangle_{\rho,\alpha}
        \\
        &= \lim_{n\to\infty}\langle f,\varphi_n\rangle
        \\
        &= \lim_{n\to\infty} \langle g,\varphi_n\rangle
        \\
        &= \lim_{n\to\infty} \langle f,\partial_{0,\mu}^{-\alpha}\e^{2\mu m}\sigma_{-1}\partial_{0,\mu}^{-\alpha}\sigma_{-1}\varphi_n\rangle_{\mu,\alpha}
        \\
        &= \int_\R\langle\partial_{0,\mu}^\alpha f(t),\psi(t)\rangle_H \d t.
\end{align*}
(ii) $\Rightarrow$ (iii): Define $\tilde{f}_{n}\coloneqq\chi_{[-n,n]}\cdot\partial_{0,\rho}^{\alpha}f=\chi_{[-n,n]}\cdot\partial_{0,\mu}^{\alpha}g$
for $n\in\mathbb{N}$. Without loss of generality let $\rho<\mu.$
Take a function $\psi_{n}\in C_{c}^{\infty}(\mathbb{R};H)$ with $\spt\psi_{n}\subseteq[-n,n]$
such that
\[
\|\tilde{f}_{n}-\psi_{n}\|_{\rho,0}\leq\frac{1}{n}\e^{(\rho-\mu)n}.
\]
Then, we estimate
\begin{align*}
   \|\tilde{f}_{n}-\psi_{n}\|_{\mu,0}^{2}
   &=
   \intop_{-n}^{n}\|\tilde{f}_{n}(t)-\psi_{n}(t)\|_{H}^{2}\e^{-2\mu t}\text{ d}t
   =
   \intop_{-n}^{n}\|\tilde{f}_{n}(t)-\psi_{n}(t)\|_{H}^{2}\e^{-2\rho t}\e^{2\left(\rho-\mu\right)t}\text{ d}t
\\
   &\leq
   \|\tilde{f}_{n}-\psi_{n}\|_{\rho,0}^{2}\e^{2(\mu-\rho)n}\leq\frac{1}{n^2}.
\end{align*}
Hence $\psi_n\to\partial_{0,\rho}^\alpha f=\partial_{0,\mu}^\alpha g$ in $L_\rho^2(\R;H)$ and in $L_\mu^2(\R;H)$ by the triangle inequality and dominated convergence.
We set $\tilde\varphi_n \coloneqq \partial_{0,\rho}^{-\alpha}\psi_n = \partial_{0,\mu}^{-\alpha}\psi_n \in C^\infty(\R;H)\cap H_\rho^\infty(\R;H)\cap H_\mu^\infty(\R;H)$.
Then $\tilde\varphi_n \to f$ and $\tilde\varphi_n \to g$ in $H_\rho^\alpha(\R;H)$ and in $H_\mu^\alpha(\R;H)$ respectively.
We use Lemma \ref{lem:der_spt}\ref{lem:der_spt_b} and choose a sequence $(\varphi_n)_{n\in\N}\in C_c^\infty(\R;H)^\N$ with $\norm{\tilde\varphi_n-\varphi_n}_{\rho,\alpha}\to 0$.
Then
\begin{align*}
        &\norm{f-\varphi_n}_{\rho,\alpha}\leq\norm{f-\tilde\varphi_n}_{\rho,\alpha} + \norm{\tilde\varphi_n - \varphi_n}_{\rho,\alpha}\to 0 &&(n\to\infty),
        \\
        &\norm{g-\varphi_n}_{\mu,\alpha}\leq\norm{g-\tilde\varphi_n}_{\mu,\alpha} + \norm{\tilde\varphi_n - \varphi_n}_{\mu,\alpha}\to 0 &&(n\to\infty).
\end{align*}
(iii) $\Rightarrow$ (i): Let $(\varphi_{n})_{n\in\mathbb{N}}$ be
a sequence in $C_{c}^{\infty}(\mathbb{R};H)$ such that $\varphi_{n}\to f$
and $\varphi_{n}\to g$ in $H_{\rho}^{\alpha}(\mathbb{R};H)$ and
$H_{\mu}^{\alpha}(\mathbb{R};H)$, respectively. Let $\varphi\in C_{c}^{\infty}(\mathbb{R};H).$
Then we have according to Proposition \ref{prop:distributions}
\begin{align*}
\intop_{\mathbb{R}}\langle\mathcal{L}_{\rho}f(t),\mathcal{L}_{-\rho}\varphi(t)\rangle_{H}\,\mathrm{d}t & =\langle f,\varphi\rangle\\
 & =\langle f,\partial_{0,\rho}^{-\alpha}\e^{2\rho m}\sigma_{-1}\partial_{0,\rho}^{-\alpha}\sigma_{-1}\varphi\rangle_{\rho,\alpha}\\
 & =\lim_{n\to\infty}\langle\varphi_{n},\partial_{0,\rho}^{-\alpha}\e^{2\rho m}\sigma_{-1}\partial_{0,\rho}^{-\alpha}\sigma_{-1}\varphi\rangle_{\rho,\alpha}\\
 & =\lim_{n\to\infty}\langle\varphi_{n},\varphi\rangle\\
 & =\lim_{n\to\infty}\langle\varphi_{n},\partial_{0,\mu}^{-\alpha}\e^{2\mu m}\sigma_{-1}\partial_{0,\mu}^{-\alpha}\sigma_{-1}\varphi\rangle_{\mu,\alpha}\\
 & =\langle g,\varphi\rangle,\\
 &= \intop_\R \left<\mathcal L_\mu f(t), \mathcal L_{-\mu} \varphi(t)\right> \d t.
\end{align*}
which completes the proof.
\end{proof}

\section{A unified solution theory \textendash{} well-posedness and causality
of fractional differential equations}

We are now able to study abstract fractional differential equations
of the form
\[
\partial_{0,\rho}^{\alpha}u=F(u).
\]
In order to obtain well-posedness of the latter problem, we need to
restrict the class of admissible right-hand sides $F$ in the latter
equation.

\begin{defn*}
Let $\rho_{0}>0$ and $\beta,\gamma\in\mathbb{R}.$ We call a function
$F:\dom(F)\subseteq\bigcap_{\rho\geq\rho_{0}}H_{\rho}^{\beta}(\mathbb{R};H)\to\bigcap_{\rho\geq\rho_{0}}H_{\rho}^{\gamma}(\mathbb{R};H)$
\emph{eventually $(\beta,\gamma)$-Lipschitz continuous, }if $\dom(F)\supseteq C_{c}^{\infty}(\mathbb{R};H)$
and there exists $\nu\geq\rho_{0}$ such that for each $\rho\ge\nu$
the function $F$ has a Lipschitz continuous extension
\[
F_{\rho}:H_{\rho}^{\beta}(\mathbb{R};H)\to H_{\rho}^{\gamma}(\mathbb{R};H)
\]
satisfying $\sup_{\rho\geq\nu}|F_{\rho}|_{\mathrm{Lip}}<\infty$.
Moreover, we call $F$ \emph{eventually $(\beta,\gamma)$-contracting,
}if $F$ is eventually $(\beta,\gamma)$-Lipschitz continuous and
$\limsup_{\rho\to\infty}|F_{\rho}|_{\mathrm{Lip}}<1.$ Here, we denote
by $|\cdot|_{\mathrm{Lip}}$ the smallest Lipschitz constant of a
Lipschitz continuous function:
\[
\abs{F_\rho}_\mathrm{lip} \coloneqq\sup_{f, g \in H_\rho^\beta(\R; H), \; f \neq g}
\frac{\norm{F_\rho(f) - F_\rho(g)}_{\rho, \gamma}}{\norm{f-g}_{\rho, \beta}}.
\]
\end{defn*}

Note that by Lemma \ref{lem:testfunctions_dense}, any eventually
Lipschitz continuous function is densely defined. Thus, the Lipschitz continuous
extension \emph{$F_{\rho}$} is \emph{unique}.

\begin{rem}
\label{rem:eveLip}(a) If $f\in H_{\rho}^{\beta}(\mathbb{R};H)$ and
$g\in H_{\mu}^{\beta}(\mathbb{R};H)$ generate the same distribution,
we have that
\[
F_{\rho}(f)=F_{\mu}(g).
\]

Indeed, by Proposition \ref{prop:independence} there exists a sequence
$(\varphi_{n})_{n\in\mathbb{N}}$ in $C_{c}^{\infty}(\mathbb{R};H)$
with $\varphi_{n}\to f$ and $\varphi_{n}\to g$ in $H_{\rho}^{\beta}(\mathbb{R};H)$
and $H_{\mu}^{\beta}(\mathbb{R};H)$, respectively. We infer that
\[
F_{\rho}(f) =\lim_{n\to\infty}F(\varphi_{n})\qquad\textrm{and}\qquad
F_{\mu}(g) =\lim_{n\to\infty}F(\varphi_{n})
\]
with convergence in $H_{\rho}^{\gamma}(\mathbb{R};H)$ and $H_{\mu}^{\gamma}(\mathbb{R};H)$ respectively.
Consequently
\[
\partial_{0,\rho}^{\gamma}F_{\rho}(f)\leftarrow\partial_{0,\rho}^{\gamma}F(\varphi_{n})=\partial_{0,\mu}^{\gamma}F(\varphi_{n})\to\partial_{0,\mu}^{\gamma}F_{\mu}(g)
\]
with convergence in $L_\rho^2(\R; H)$ and hence almost everywhere for a suitable subsequence of $(\varphi_n)_{n \in \N}$.
The assertion follows from Proposition \ref{prop:independence}.

(b) We shall need the following elementary observation later on.
Let $F$ be evenutally $(\beta,\gamma)$-Lipschitz continuous, $\alpha\in\mathbb{R}$.
Let $\rho\geq\rho_{0}$.
Then
\[
\tilde{F}:C_{c}^{\infty}(\mathbb{R};H)\ni\varphi\mapsto F_{\rho}(\partial_{0,\rho}^{\alpha}\varphi)
\]
is eventually $(\beta+\alpha,\gamma)$-Lipschitz continuous.
Indeed, the assertion follows from part (a)  and
\[
\norm{\tilde F (f)-\tilde F (g)}_{\mu, \gamma}
\leq \abs{F_{\mu}}_\mathrm{Lip} \norm{\partial_{0, \mu}^\alpha f - \partial_{0, \mu}^\alpha g}_{\mu, \beta}
= \abs{F_{\mu}}_\mathrm{Lip} \norm{f - g}_{\mu, \alpha + \beta},
\]
for $\mu \geq \nu$, $f, g \in C_c^\infty(\R; H)$.
\end{rem}

\begin{thm}
\label{thm:sol_theory_contraction}Let $\alpha>0,\beta\in\mathbb{R},\rho_{0}>0$
and $F:\dom(F)\subseteq\bigcap_{\rho\geq\rho_{0}}H_{\rho}^{\beta}(\mathbb{R};H)\to\bigcap_{\rho\geq\rho_{0}}H_{\rho}^{\beta-\alpha}(\mathbb{R};H)$
be eventually $(\beta,\beta-\alpha)$-contracting. Then there exists
$\nu\geq\rho_{0}$ such that for each $\rho\geq\nu$ there is a unique
$u_{\rho}\in H_{\rho}^{\beta}(\mathbb{R};H)$ satisfying
\begin{equation}
\partial_{0,\rho}^{\alpha}u_{\rho}=F_{\rho}(u_{\rho}).\label{eq:eq_frac}
\end{equation}
\end{thm}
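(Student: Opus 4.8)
The plan is to recast \eqref{eq:eq_frac} as a fixed-point equation and to invoke the contraction mapping principle on the Hilbert space $H_\rho^\beta(\mathbb{R};H)$. First I would use Lemma \ref{lem:Sobolev_chain_fractional}(d), applied with the positive exponent $\alpha$ and target index $\beta$, to record that
\[
\partial_{0,\rho}^{\alpha}\colon H_\rho^\beta(\mathbb{R};H)\to H_\rho^{\beta-\alpha}(\mathbb{R};H)
\]
is unitary; its inverse is the unitary extension of $\partial_{0,\rho}^{-\alpha}\colon H_\rho^{\beta-\alpha}(\mathbb{R};H)\to H_\rho^{\beta}(\mathbb{R};H)$, an isometric isomorphism. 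Applying $\partial_{0,\rho}^{-\alpha}$ to both sides of \eqref{eq:eq_frac} then shows that $u_\rho\in H_\rho^\beta(\mathbb{R};H)$ solves \eqref{eq:eq_frac} if and only if $u_\rho$ is a fixed point of
\[
T_\rho\colon H_\rho^\beta(\mathbb{R};H)\to H_\rho^\beta(\mathbb{R};H),\qquad T_\rho(u)\coloneqq\partial_{0,\rho}^{-\alpha}F_\rho(u),
\]
which is well-defined precisely because $F_\rho$ maps into $H_\rho^{\beta-\alpha}(\mathbb{R};H)$.

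The key step is to verify that $T_\rho$ is a contraction for all sufficiently large $\rho$. Since $\partial_{0,\rho}^{-\alpha}$ acts isometrically from $H_\rho^{\beta-\alpha}(\mathbb{R};H)$ to $H_\rho^\beta(\mathbb{R};H)$, for $u,v\in H_\rho^\beta(\mathbb{R};H)$ I would estimate
\[
\norm{T_\rho(u)-T_\rho(v)}_{\rho,\beta}=\norm{F_\rho(u)-F_\rho(v)}_{\rho,\beta-\alpha}\le\abs{F_\rho}_{\mathrm{Lip}}\norm{u-v}_{\rho,\beta},
\]
so the Lipschitz constant of $F_\rho$ transfers unchanged to $T_\rho$. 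By the definition of eventually $(\beta,\beta-\alpha)$-contracting there is some $\nu_0\ge\rho_0$ such that $F_\rho$ is defined for every $\rho\ge\nu_0$ and $\limsup_{\rho\to\infty}\abs{F_\rho}_{\mathrm{Lip}}<1$. From this $\limsup$ condition I can choose $\nu\ge\nu_0$ and a constant $q\in[0,1)$ with $\abs{F_\rho}_{\mathrm{Lip}}\le q$ for all $\rho\ge\nu$, so that $T_\rho$ is a $q$-contraction on the complete metric space $H_\rho^\beta(\mathbb{R};H)$.

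Finally, for each fixed $\rho\ge\nu$ Banach's fixed-point theorem yields a unique $u_\rho\in H_\rho^\beta(\mathbb{R};H)$ with $T_\rho(u_\rho)=u_\rho$, and by the equivalence from the first step this $u_\rho$ is exactly the unique solution of \eqref{eq:eq_frac}. I do not anticipate a genuine obstacle: the argument is a direct application of the contraction principle, and the only points demanding care are (i) that $\partial_{0,\rho}^{-\alpha}$ is \emph{exactly} isometric on the extrapolation scale, ensuring the Lipschitz constant passes from $F_\rho$ to $T_\rho$ without loss (guaranteed by Lemma \ref{lem:Sobolev_chain_fractional}), and (ii) extracting a single contraction constant $q<1$ valid for all $\rho\ge\nu$ from the $\limsup$ hypothesis, which simultaneously fixes the threshold $\nu$ asserted in the statement.
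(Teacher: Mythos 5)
Your proposal is correct and follows essentially the same route as the paper: both recast \eqref{eq:eq_frac} as a fixed-point problem for $\partial_{0,\rho}^{-\alpha}F_{\rho}$ on $H_{\rho}^{\beta}(\mathbb{R};H)$, use the unitarity of $\partial_{0,\rho}^{-\alpha}$ from Lemma \ref{lem:Sobolev_chain_fractional} to transfer the Lipschitz constant unchanged, and conclude with the contraction mapping theorem after choosing $\nu$ via the $\limsup$ condition. Your extraction of a uniform constant $q<1$ is slightly more than needed (each fixed $\rho\geq\nu$ only requires $\abs{F_\rho}_{\mathrm{Lip}}<1$), but this is harmless.
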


\begin{proof}
This is a simple consequence of the contraction mapping theorem. Indeed,
choosing $\nu\geq\rho_{0}$ large enough, such that $|F_{\rho}|_{\mathrm{Lip}}<1$
for each $\rho \geq \nu$, we obtain that
\[
\partial_{0,\rho}^{-\alpha}F_{\rho}:H_{\rho}^{\beta}(\mathbb{R};H)\to H_{\rho}^{\beta}(\mathbb{R};H)
\]
is a strict contraction, since $\partial_{0,\rho}^{-\alpha}:H_{\rho}^{\beta-\alpha}(\mathbb{R};H)\to H_{\rho}^{\beta}(\mathbb{R};H)$
is unitary by Lemma \ref{lem:Sobolev_chain_fractional}. Hence, the
mapping $\partial_{0,\rho}^{-\alpha}F_{\rho}$ admits a unique fixed
point $u_{\rho}\in H_{\rho}^{\beta}(\mathbb{R};H),$ which is equivalent
to $u_{\rho}$ being a solution of (\ref{eq:eq_frac}).
\end{proof}

\begin{cor}
\label{cor:sol_theory_Lipschitz}Let $\alpha>0,\beta\in\mathbb{R},\rho_{0}>0$
and $F:\dom(F)\subseteq\bigcap_{\rho\geq\rho_{0}}H_{\rho}^{\beta}(\mathbb{R};H)\to\bigcap_{\rho\geq\rho_{0}}H_{\rho}^{\beta-\gamma}(\mathbb{R};H)$
for some $\gamma\in[0,\alpha[$ be eventually $(\beta,\beta-\gamma)$-Lipschitz continuous.
Then there exists $\nu\geq\rho_{0}$ such that for each $\rho\geq\nu$
there is a unique $u_{\rho}\in H_{\rho}^{\beta}(\mathbb{R};H)$ satisfying
\[
\partial_{0,\rho}^{\alpha}u_{\rho}=F_{\rho}(u_{\rho}).
\]
\end{cor}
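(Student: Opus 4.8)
The plan is to reduce the statement to Theorem \ref{thm:sol_theory_contraction} by exploiting the slack between the orders $\gamma$ and $\alpha$. Since $\gamma<\alpha$ we have $\beta-\gamma>\beta-\alpha$, so Lemma \ref{lem:Sobolev_chain_fractional}(c) furnishes the continuous dense embedding $\iota_{(\beta-\gamma)\to(\beta-\alpha)}\colon H_{\rho}^{\beta-\gamma}(\R;H)\hookrightarrow H_{\rho}^{\beta-\alpha}(\R;H)$ of norm at most $|\rho|^{\gamma-\alpha}$. First I would compose $F$ with this embedding to obtain a map $\hat F$ into $\bigcap_{\rho\geq\rho_{0}}H_{\rho}^{\beta-\alpha}(\R;H)$; its domain still contains $C_{c}^{\infty}(\R;H)$, and since $\iota_{(\beta-\gamma)\to(\beta-\alpha)}$ is the canonical inclusion under the distributional identification (Proposition \ref{prop:distributions} and the subsequent remark), $\hat F_{\rho}(u)=F_{\rho}(u)$ as distributions, so the fixed-point equations for $F$ and $\hat F$ express the same relation.

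Next I would estimate the Lipschitz constants of the extensions $\hat F_{\rho}=\iota_{(\beta-\gamma)\to(\beta-\alpha)}F_{\rho}$. For $\rho\geq\nu$, with $\nu$ coming from the eventual Lipschitz continuity of $F$, one gets $|\hat F_{\rho}|_{\mathrm{Lip}}\leq|\rho|^{\gamma-\alpha}|F_{\rho}|_{\mathrm{Lip}}$. As $\sup_{\rho\geq\nu}|F_{\rho}|_{\mathrm{Lip}}<\infty$ by hypothesis and $|\rho|^{\gamma-\alpha}$ is bounded on $[\nu,\infty[$ and tends to $0$ as $\rho\to\infty$ (this is exactly where $\gamma-\alpha<0$ enters), it follows that $\sup_{\rho\geq\nu}|\hat F_{\rho}|_{\mathrm{Lip}}<\infty$ and $\limsup_{\rho\to\infty}|\hat F_{\rho}|_{\mathrm{Lip}}=0<1$. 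Hence $\hat F$ is eventually $(\beta,\beta-\alpha)$-contracting.

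Applying Theorem \ref{thm:sol_theory_contraction} to $\hat F$ then yields some $\nu'\geq\rho_{0}$ such that for each $\rho\geq\nu'$ there is a unique $u_{\rho}\in H_{\rho}^{\beta}(\R;H)$ with $\partial_{0,\rho}^{\alpha}u_{\rho}=\hat F_{\rho}(u_{\rho})$. I would close the argument by checking that this is precisely the claimed statement: the fixed point satisfies $u_{\rho}=\partial_{0,\rho}^{-\alpha}\hat F_{\rho}(u_{\rho})=\partial_{0,\rho}^{-\alpha}F_{\rho}(u_{\rho})$ with $F_{\rho}(u_{\rho})\in H_{\rho}^{\beta-\gamma}(\R;H)$, and unitarity of $\partial_{0,\rho}^{-\alpha}\colon H_{\rho}^{\beta-\gamma}(\R;H)\to H_{\rho}^{\beta-\gamma+\alpha}(\R;H)$ (Lemma \ref{lem:Sobolev_chain_fractional}) shows $u_{\rho}\in H_{\rho}^{\beta-\gamma+\alpha}(\R;H)\subseteq H_{\rho}^{\beta}(\R;H)$. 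Consequently $\partial_{0,\rho}^{\alpha}u_{\rho}=F_{\rho}(u_{\rho})$ holds in the finer space $H_{\rho}^{\beta-\gamma}(\R;H)$, and uniqueness within $H_{\rho}^{\beta}(\R;H)$ is inherited verbatim from the theorem.

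All the computations here are routine; the one point that genuinely uses the hypothesis is the gain estimate $|\hat F_{\rho}|_{\mathrm{Lip}}\leq|\rho|^{\gamma-\alpha}|F_{\rho}|_{\mathrm{Lip}}$, by which the strict inequality $\gamma<\alpha$ converts the mere uniform boundedness of the Lipschitz constants of $F$ into an \emph{eventual} contraction. I therefore expect the main (and only mild) obstacle to be the bookkeeping of spaces, namely ensuring that passing through the embedding does not alter the meaning of the equation and that the solution produced really satisfies the equation at the level $\beta-\gamma$.
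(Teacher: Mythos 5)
Your proposal is correct and follows essentially the same route as the paper: the paper's proof likewise composes $F$ with the embedding $\iota_{\beta-\gamma\to\beta-\alpha}$, uses the norm bound $\|\iota_{\beta-\gamma\to\beta-\alpha}\|\leq\rho^{\gamma-\alpha}$ from Lemma \ref{lem:Sobolev_chain_fractional} to turn uniform boundedness of $|F_\rho|_{\mathrm{Lip}}$ into $\limsup_{\rho\to\infty}|\iota_{\beta-\gamma\to\beta-\alpha}\circ F_\rho|_{\mathrm{Lip}}=0<1$, and then invokes Theorem \ref{thm:sol_theory_contraction}. Your closing bookkeeping (that the embedded equation expresses the same relation, and the extra regularity $u_\rho\in H_\rho^{\beta-\gamma+\alpha}$) is a harmless elaboration of what the paper compresses into ``it suffices to prove that $\iota_{\beta-\gamma\to\beta-\alpha}\circ F$ is eventually $(\beta,\beta-\alpha)$-contracting.''
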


\begin{proof}
It suffices to prove that $\iota_{\beta-\gamma\to\beta-\alpha}\circ F$
is eventually $(\beta,\beta-\alpha)$-contracting by Theorem \ref{thm:sol_theory_contraction}.
Let $\nu \geq \rho$, s.t. for $\rho \geq \nu$, $F_\rho$ exists.
Then for $\rho \geq \nu$
\[
\left|\iota_{\beta-\gamma\to\beta-\alpha}\circ F_{\rho}\right|_{\mathrm{Lip}}\leq\|\iota_{\beta-\gamma\to\beta-\alpha}\||F_{\rho}|_{\mathrm{Lip}}\leq\rho^{\gamma-\alpha}|F_{\rho}|_{\mathrm{Lip}}
\]
by Lemma \ref{lem:Sobolev_chain_fractional}. Since $|F_{\rho}|_{\mathrm{Lip}}$
is bounded in $\rho$ on $[\nu, \infty[$ by assumption, we infer
\[
\limsup_{\rho\to\infty}\left|\iota_{\beta-\gamma\to\beta-\alpha}\circ F_{\rho}\right|_{\mathrm{Lip}}=0<1.\tag*{\qedhere}
\]
\end{proof}

Next, we want to show that the solution $u_{\rho}$ of (\ref{eq:eq_frac})
is actually independent of the particular choice of $\rho$. For doing
so, we need the concept of causality, which will be addressed in the
next propositions.

\begin{lem}\label{lem:fracsupport}
Let $\rho > 0$, $\alpha \in \R$ and $a \in \R$.
Let $f \in H_\rho^\alpha(\R; H)$ with $\spt f \subseteq \R_{\geq a}$.
Then there is a sequence $(\varphi_n)_{n \in \N} \in C_c^\infty(\R; H)^\N$ with $\spt \varphi_n \subseteq \R_{\geq a}$ for $n \in \N$ and $\varphi_n \to f$ in $H_\rho^\alpha(\R; H)$ as $n \to \infty$.
\end{lem}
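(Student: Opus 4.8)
The plan is to transport the support information across the unitary operator $\partial_{0,\rho}^{\alpha}$, reduce the problem to an $L_\rho^2$-approximation that respects the half-line support, and finally cut the resulting smooth functions down to compact support using the support-preserving approximation of Lemma \ref{lem:der_spt}\ref{lem:der_spt_b}. First I would pass to $g\coloneqq\partial_{0,\rho}^{\alpha}f$, which lies in $L_\rho^2(\R;H)$ because $\partial_{0,\rho}^{\alpha}\colon H_\rho^\alpha(\R;H)\to L_\rho^2(\R;H)$ is unitary by Lemma \ref{lem:Sobolev_chain_fractional}. The crucial claim is that $\spt g\subseteq\R_{\geq a}$, i.e. that $\partial_{0,\rho}^{\alpha}$ is causal for $\rho>0$. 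To prove this, fix $\varphi\in C_c^\infty(\R;H)$ with $\spt\varphi\subseteq(-\infty,a)$; since $\spt g$ agrees with the usual $L^2$-support (Proposition \ref{prop:distributions}, case $\alpha=0$), it suffices to show $\langle g,\varphi\rangle=0$. Using $\mathcal L_\rho g=(\i m+\rho)^\alpha\mathcal L_\rho f$ and moving the multiplier to the other slot of the pairing exactly as in the proof of Proposition \ref{prop:distributions}, one obtains $\langle g,\varphi\rangle=\int_\R\langle\mathcal L_\rho f,\mathcal L_{-\rho}\Phi\rangle_H$ with $\Phi\coloneqq\sigma_{-1}\partial_{0,\rho}^{\alpha}\sigma_{-1}\varphi$, that is, the transpose identity $\langle\partial_{0,\rho}^{\alpha}f,\varphi\rangle=\langle f,\Phi\rangle$ in the sense of Proposition \ref{prop:distributions}.

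Next I would locate the support of $\Phi$. By Lemma \ref{lem:der_spt}\ref{lem:der_spt_a} we have $\inf\spt\partial_{0,\rho}^{\alpha}\sigma_{-1}\varphi\geq\inf\spt\sigma_{-1}\varphi=-\sup\spt\varphi$, so applying $\sigma_{-1}$ once more gives $\spt\Phi\subseteq(-\infty,\sup\spt\varphi]\subseteq(-\infty,a)$; moreover $\Phi\in C^\infty(\R;H)\cap H_{-\rho}^\infty(\R;H)$, since $\sigma_{-1}$ is unitary from $H_\rho^\gamma(\R;H)$ to $H_{-\rho}^\gamma(\R;H)$ by Lemma \ref{lem:sigma-1_unitary}. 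Hence, applying Lemma \ref{lem:der_spt}\ref{lem:der_spt_b} with the parameter $-\rho$ and exponent $-\alpha$, I obtain $\Phi_k\in C_c^\infty(\R;H)$ with $\spt\Phi_k\subseteq\spt\Phi\subseteq(-\infty,a)$ and $\Phi_k\to\Phi$ in $H_{-\rho}^{-\alpha}(\R;H)$. Because $\spt\Phi_k$ is disjoint from $\spt f\subseteq\R_{\geq a}$ we have $\langle f,\Phi_k\rangle=0$, and the estimate $|\langle f,\cdot\rangle|\leq\|f\|_{\rho,\alpha}\|\cdot\|_{-\rho,-\alpha}$ from the proof of Proposition \ref{prop:char_distributions} lets me pass to the limit, giving $\langle f,\Phi\rangle=0$. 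Thus $\langle g,\varphi\rangle=0$, and the causality claim $\spt g\subseteq\R_{\geq a}$ follows.

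With $g\in L_\rho^2(\R;H)$ supported in $\R_{\geq a}$ in hand, the remainder is routine. I would approximate $g$ in $L_\rho^2(\R;H)$ by $\psi_n\in C_c^\infty(\R;H)$ with $\spt\psi_n\subseteq\R_{\geq a}$, using a mollifier supported in $(0,\varepsilon)$ (so that convolution cannot transport mass to the left of $a$) followed by a smooth cut-off on the right, which shrinks the support and hence keeps it inside $\R_{\geq a}$. Setting $\tilde\varphi_n\coloneqq\partial_{0,\rho}^{-\alpha}\psi_n$, Lemma \ref{lem:der_spt}\ref{lem:der_spt_a} gives $\tilde\varphi_n\in C^\infty(\R;H)\cap H_\rho^\infty(\R;H)$ with $\inf\spt\tilde\varphi_n\geq\inf\spt\psi_n\geq a$, while unitarity of $\partial_{0,\rho}^{-\alpha}\colon L_\rho^2(\R;H)\to H_\rho^\alpha(\R;H)$ (Lemma \ref{lem:Sobolev_chain_fractional}) yields $\tilde\varphi_n\to f$ in $H_\rho^\alpha(\R;H)$.

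Finally I would apply Lemma \ref{lem:der_spt}\ref{lem:der_spt_b} (with $\mu=\rho$ and exponent $\alpha$) to each $\tilde\varphi_n$, obtaining functions in $C_c^\infty(\R;H)$ supported in $\spt\tilde\varphi_n\subseteq\R_{\geq a}$ and approximating $\tilde\varphi_n$ in $H_\rho^\alpha(\R;H)$; a diagonal choice then produces the desired sequence $(\varphi_n)$ with $\spt\varphi_n\subseteq\R_{\geq a}$ and $\varphi_n\to f$. The main obstacle is the causality statement $\spt g\subseteq\R_{\geq a}$, since $g$ is produced only abstractly as the image of a distribution under a unitary operator; everything there rests on the transpose identity $\langle\partial_{0,\rho}^{\alpha}f,\varphi\rangle=\langle f,\sigma_{-1}\partial_{0,\rho}^{\alpha}\sigma_{-1}\varphi\rangle$ combined with the one-sided support bound of Lemma \ref{lem:der_spt}\ref{lem:der_spt_a}, which is precisely where the hypothesis $\rho>0$ is used.
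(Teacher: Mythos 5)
Your proof is correct and follows the same skeleton as the paper's: pass to $g=\partial_{0,\rho}^{\alpha}f\in L_\rho^2(\mathbb{R};H)$ by unitarity, approximate $g$ in $L_\rho^2$ by test functions supported in $\mathbb{R}_{\geq a}$, pull back with $\partial_{0,\rho}^{-\alpha}$ using Lemma \ref{lem:der_spt}\ref{lem:der_spt_a} to keep $\inf\spt\geq a$, and finish with Lemma \ref{lem:der_spt}\ref{lem:der_spt_b} plus a diagonal choice. The one genuine difference is your second paragraph: the paper simply writes ``we may assume $\spt\tilde\psi_n\subseteq\mathbb{R}_{>a}$'' for its $L_\rho^2$-approximants, which silently presupposes exactly the causality statement $\spt\partial_{0,\rho}^{\alpha}f\subseteq\mathbb{R}_{\geq a}$ --- the only point where the hypothesis on $\spt f$ enters at all. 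You prove this via the transpose identity $\langle\partial_{0,\rho}^{\alpha}f,\varphi\rangle=\langle f,\sigma_{-1}\partial_{0,\rho}^{\alpha}\sigma_{-1}\varphi\rangle$ combined with the one-sided support bound of Lemma \ref{lem:der_spt}\ref{lem:der_spt_a}, the support-preserving approximation of Lemma \ref{lem:der_spt}\ref{lem:der_spt_b}, and the continuity estimate $|\langle f,\cdot\rangle|\leq\|f\|_{\rho,\alpha}\|\cdot\|_{-\rho,-\alpha}$; this is the same machinery the paper deploys in Proposition \ref{prop:independence} and later in Proposition \ref{prop:causality_fractional}, but note that Proposition \ref{prop:causality_fractional} is itself proved \emph{from} the present lemma, so it could not be quoted here --- your inline argument avoids that circularity and makes the lemma self-contained. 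The individual steps check out: $\mathcal{L}_{-\rho}\sigma_{-1}=\sigma_{-1}\mathcal{L}_{\rho}$ gives $\mathcal{L}_{-\rho}\Phi=(-\i m+\rho)^{\alpha}\mathcal{L}_{-\rho}\varphi$ with $\overline{(-\i t+\rho)^{\alpha}}=(\i t+\rho)^{\alpha}$ for $\rho>0$, so the pairing $\int_{\mathbb{R}}\langle\mathcal{L}_{\rho}f,\mathcal{L}_{-\rho}\Phi\rangle_H$ is well defined and the multiplier shift is legitimate, and the right-sided mollifier argument for the weighted $L^2$-approximation is routine (conjugating by $\e^{-\rho m}$ turns it into a standard approximate-identity estimate). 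So your proof stands; if anything, it is more explicit than the printed one at precisely the step where $\rho>0$ and the support hypothesis are used.
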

\begin{proof}
Let $(\tilde\psi_n)_{n \in \N} \in C_c^\infty(\R; H)^\N$ be such that $\tilde\psi_n \to \partial_{0,\rho}^\alpha f$ in $H_\rho^0(\R; H)$ as $n \to \infty$.
We may assume that $\spt\tilde\psi_n \subseteq \R_{> a}$.
We set $\psi_n\coloneqq\partial_{0,\rho}^{-\alpha}\tilde\psi_n$ for $n\in\N$.
Then $\psi_n \to f$ as $n\to\infty$ in $H_\rho^\alpha(\R; H)$ and $\inf\spt\psi_n>a$ by Lemma \ref{lem:der_spt}\ref{lem:der_spt_a}.
We use Lemma \ref{lem:der_spt}\ref{lem:der_spt_b} and pick a sequence $(\varphi_n)_{n\in\N}\in C_c^\infty(\R;H)^\N$ with $\spt(\varphi_n)\subseteq\spt(\psi_n)$ for $n\in\N$ and $\varphi_n-\psi_n\to 0$ in $H_\rho^\alpha(\R;H)$ when $n\to\infty$.
Then
\begin{equation*}
        \norm{\varphi_n - f}_{\rho,\alpha} \leq \norm{\varphi_n - \psi_n}_{\rho,\alpha} + \norm{\psi_n - f}_{\rho,\alpha}\to 0\qquad (n\to\infty).\qedhere
\end{equation*}
\end{proof}

\begin{prop}
\label{prop:causality_fractional}Let $f\in H_{\rho}^{\alpha}(\mathbb{R};H)$
for some $\alpha\in\mathbb{R},\rho>0.$ Assume that $\spt f\subseteq\mathbb{R}_{\geq a}$
for some $a\in\mathbb{R}.$ Then
\[
\spt\partial_{0,\rho}^{\beta}f\subseteq\mathbb{R}_{\geq a}
\]
for all $\beta\in\mathbb{R}.$
\end{prop}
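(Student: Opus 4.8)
The plan is to reduce the statement to the level of test functions using Lemma~\ref{lem:fracsupport}, exploit the causality of $\partial_{0,\rho}^{\beta}$ on $C_c^\infty(\R;H)$ recorded in Lemma~\ref{lem:der_spt}\ref{lem:der_spt_a}, and then pass to the limit by means of the continuity of the distributional pairing. Throughout, the support of an element of $H_\rho^{\alpha-\beta}(\R;H)$ is understood in the distributional sense introduced after Proposition~\ref{prop:distributions}, so it suffices to show that $\langle \partial_{0,\rho}^{\beta} f,\varphi\rangle = 0$ for every $\varphi\in C_c^\infty(\R_{<a};H)$; taking $U=\R_{<a}$ in the definition of the support then gives $\spt\partial_{0,\rho}^{\beta}f\subseteq\R\setminus\R_{<a}=\R_{\geq a}$.

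First I would apply Lemma~\ref{lem:fracsupport}: since $f\in H_\rho^{\alpha}(\R;H)$ has $\spt f\subseteq\R_{\geq a}$, there is a sequence $(\varphi_n)_{n\in\N}$ in $C_c^\infty(\R;H)$ with $\spt\varphi_n\subseteq\R_{\geq a}$ and $\varphi_n\to f$ in $H_\rho^{\alpha}(\R;H)$. By Lemma~\ref{lem:Sobolev_chain_fractional} the operator $\partial_{0,\rho}^{\beta}\colon H_\rho^{\alpha}(\R;H)\to H_\rho^{\alpha-\beta}(\R;H)$ is unitary, hence continuous, so that $\partial_{0,\rho}^{\beta}\varphi_n\to\partial_{0,\rho}^{\beta}f$ in $H_\rho^{\alpha-\beta}(\R;H)$.

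Next I would use Lemma~\ref{lem:der_spt}\ref{lem:der_spt_a}, which guarantees that each $\partial_{0,\rho}^{\beta}\varphi_n$ belongs to $C^\infty(\R;H)\cap H_\rho^\infty(\R;H)$ and satisfies $\inf\spt\partial_{0,\rho}^{\beta}\varphi_n\geq\inf\spt\varphi_n\geq a$, so $\spt\partial_{0,\rho}^{\beta}\varphi_n\subseteq\R_{\geq a}$. In particular $\partial_{0,\rho}^{\beta}\varphi_n$ is an honest $L_\rho^2$-function, and therefore for any $\varphi\in C_c^\infty(\R_{<a};H)$ its distributional pairing reduces, by Proposition~\ref{prop:distributions} applied with the index $0$, to $\int_\R\langle(\partial_{0,\rho}^{\beta}\varphi_n)(t),\varphi(t)\rangle_H\,\mathrm{d}t=0$, the integral vanishing because the supports of $\partial_{0,\rho}^{\beta}\varphi_n$ and $\varphi$ are disjoint.

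Finally I would invoke the continuity of $\langle\,\cdot\,,\varphi\rangle$ on $H_\rho^{\alpha-\beta}(\R;H)$: the estimate $|\langle g,\varphi\rangle|\leq\|g\|_{\rho,\alpha-\beta}\,\|\varphi\|_{-\rho,-(\alpha-\beta)}$ established in the proof of Proposition~\ref{prop:char_distributions} (with $\alpha$ replaced by $\alpha-\beta$) yields, by linearity in the first slot, $\langle\partial_{0,\rho}^{\beta}f,\varphi\rangle=\lim_{n\to\infty}\langle\partial_{0,\rho}^{\beta}\varphi_n,\varphi\rangle=0$ for every $\varphi\in C_c^\infty(\R_{<a};H)$, which is what was needed. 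The only point requiring care is the consistency of the two descriptions of the pairing used here — the plain $L^2$-integral for the smooth approximants $\partial_{0,\rho}^{\beta}\varphi_n$ on the one hand, and the $H_\rho^{\alpha-\beta}$-pairing governing the passage to the limit on the other — but this is exactly the fact, noted in the remark following Proposition~\ref{prop:distributions} and proved via Proposition~\ref{prop:independence}, that $\langle\,\cdot\,,\,\cdot\,\rangle$ does not depend on the choice of $\rho$ or the order index. I expect this consistency check to be the only genuinely delicate step; everything else is a direct assembly of the preceding lemmas.
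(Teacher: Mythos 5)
Your proposal is correct and follows essentially the same route as the paper's proof: approximate $f$ by test functions supported in $\R_{\geq a}$ via Lemma \ref{lem:fracsupport}, use Lemma \ref{lem:der_spt}\ref{lem:der_spt_a} to control $\spt\partial_{0,\rho}^{\beta}\varphi_n$, and pass to the limit using the continuity of $\langle\,\cdot\,,\varphi\rangle$ with respect to $\|\cdot\|_{\rho,\alpha-\beta}$ (which the paper makes explicit through the unitary operator $\partial_{0,\rho}^{-(\alpha-\beta)}\e^{2\rho m}\sigma_{-1}\partial_{0,\rho}^{-(\alpha-\beta)}\sigma_{-1}$ from Proposition \ref{prop:distributions}, while you cite the equivalent Cauchy--Schwarz estimate from Proposition \ref{prop:char_distributions}). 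The consistency point you flag is indeed the right thing to check, but it is immediate here since for fixed $\rho$ the pairing is given by the same formula $\int_\R\langle\mathcal{L}_\rho g,\mathcal{L}_{-\rho}\varphi\rangle_H$ on every $H_\rho^{\gamma}(\R;H)$.
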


\begin{proof}
Let $\varphi \in C_c^\infty(\R; H)$ with $\spt \varphi \subseteq \R_{<a}$.
By Lemma \ref{lem:fracsupport} we pick a sequence $(\varphi_n)_{n \in \N} \in C_c^\infty(\R; H)^\N$,
s.t.\ $\spt \varphi_n \subseteq \R_{\geq a}$ ($n\in\N$) and $\varphi_n \to f$ in $H_\rho^\alpha(\R; H)$.
Then $\spt \partial_{0, \rho}^\beta \varphi_n \subseteq \R_{\geq a}$ by Lemma \ref{lem:der_spt}\ref{lem:der_spt_a}.
By Proposition \ref{prop:distributions} we have.
\begin{equation*}
        \left<\partial_{0, \rho}^\beta \varphi_n, \varphi\right> = \int_\R \left<\partial_{0, \rho}^\beta \varphi_n, \varphi\right>_H(t) \d t = 0.
\end{equation*}
Since $\partial_{0, \rho}^\beta$ is unitary, we have $\partial_{0, \rho}^\beta \varphi_n \to \partial_{0, \rho}^\beta f$ in $H_\rho^{\alpha - \beta}(\R; H)$.
We compute
\begin{align*}
        \left<\partial_{0, \rho}^\beta f, \varphi\right> & = \langle\partial_{0,\rho}^{\beta}f,\partial_{0,\rho}^{-(\alpha-\beta)}\e^{2\rho m}\sigma_{-1}\partial_{0,\rho}^{-(\alpha-\beta)}\sigma_{-1}\varphi\rangle_{\rho,\alpha-\beta}\\
        & = \lim_{n\to\infty} \langle\partial_{0,\rho}^{\beta}\varphi_n,\partial_{0,\rho}^{-(\alpha-\beta)}\e^{2\rho m}\sigma_{-1}\partial_{0,\rho}^{-(\alpha-\beta)}\sigma_{-1}\varphi\rangle_{\rho,\alpha-\beta}\\
        & = \lim_{n\to\infty} \left<\partial_{0, \rho}^\beta \varphi_n, \varphi\right>\\
        & = 0. \qedhere
\end{align*}
\end{proof}

The proof of the following theorem outlining causality of $\partial_{0,\rho}^{-\alpha}F_{\rho}$,
is in spirit similar to the approach in \cite[Theorem 4.5]{kalauch2014hilbert}.
However, one has to adopt the distributional setting and the (different)
definition of eventually Lipschitz continuity here accordingly.

\begin{thm}
\label{thm:causality}Let the assumptions of Theorem \ref{thm:sol_theory_contraction}
be satisfied. Then, for each $\rho\geq\nu,$ where $\nu$ is chosen
according to Theorem \ref{thm:sol_theory_contraction}, the mapping
\[
\partial_{0,\rho}^{-\alpha}F_{\rho}:H_{\rho}^{\beta}(\mathbb{R};H)\to H_{\rho}^{\beta}(\mathbb{R};H)
\]
is \emph{causal}, that is, for each $u,v\in H_{\rho}^{\beta}(\mathbb{R};H)$
satisfying $\spt(u-v)\subseteq\mathbb{R}_{\geq a}$ for some $a\in\mathbb{R},$
it holds that $\spt\left(\partial_{0,\rho}^{-\alpha}F_{\rho}(u)-\partial_{0,\rho}^{-\alpha}F_{\rho}(v)\right)\subseteq\mathbb{R}_{\geq a}.$
Here, the support is meant in the sense of distributions.
\end{thm}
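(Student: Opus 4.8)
The plan is to reduce the causality statement for the nonlinear operator $\partial_{0,\rho}^{-\alpha}F_{\rho}$ to the two facts already in place: the eventual Lipschitz continuity of $F$, which lets me approximate by test functions and transport support information, and the causality of the linear operator $\partial_{0,\rho}^{-\alpha}$ established in Proposition \ref{prop:causality_fractional}. The key realization is that causality is really a statement about \emph{differences}: I fix $u,v\in H_{\rho}^{\beta}(\mathbb{R};H)$ with $\spt(u-v)\subseteq\mathbb{R}_{\geq a}$ and must show the output difference is again supported in $\mathbb{R}_{\geq a}$. Since $\partial_{0,\rho}^{-\alpha}$ is unitary from $H_{\rho}^{\beta-\alpha}$ to $H_{\rho}^{\beta}$ and preserves the support condition by Proposition \ref{prop:causality_fractional}, it suffices to prove the support-preservation for the nonlinearity itself, namely
\[
\spt(u-v)\subseteq\mathbb{R}_{\geq a}\implies\spt\bigl(F_{\rho}(u)-F_{\rho}(v)\bigr)\subseteq\mathbb{R}_{\geq a}.
\]

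First I would record the reduction above, and then concentrate on the nonlinear support claim. Here the natural idea is to localize past $a$: because only the behaviour of $u$ and $v$ to the right of $a$ differs, I want to replace both functions by a common modification on $\mathbb{R}_{<a}$ and exploit Lipschitz continuity. Concretely, I would pick a cut-off and set $w\coloneqq u$ on $\mathbb{R}_{<a}$ and $w\coloneqq v$ to the right, or more cleanly use the decomposition $u-v$ supported in $\mathbb{R}_{\geq a}$ directly: approximate $u-v$ by test functions $\psi_{n}\in C_{c}^{\infty}(\mathbb{R};H)$ with $\spt\psi_{n}\subseteq\mathbb{R}_{\geq a}$ via Lemma \ref{lem:fracsupport}, and approximate $v$ itself by test functions $\chi_{n}$. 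Then $\chi_{n}+\psi_{n}\to u$ and $\chi_{n}\to v$ in $H_{\rho}^{\beta}$, so by Lipschitz continuity $F(\chi_{n}+\psi_{n})\to F_{\rho}(u)$ and $F(\chi_{n})\to F_{\rho}(v)$ in $H_{\rho}^{\beta-\alpha}$. The point is that $(\chi_{n}+\psi_{n})-\chi_{n}=\psi_{n}$ has support in $\mathbb{R}_{\geq a}$; I must now convert this into a support statement about $F(\chi_{n}+\psi_{n})-F(\chi_{n})$.

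\textbf{The main obstacle} is precisely that last conversion: unlike a linear map, $F$ does not obey $\spt F(f)\subseteq\spt f$, and the definition of eventual Lipschitz continuity controls only norms, not supports. So I cannot directly deduce a support property of a single image difference from the Lipschitz bound. The way around this is to feed the Lipschitz estimate against the \emph{parameter} $\rho$. Testing $F_{\rho}(u)-F_{\rho}(v)$ against a $\varphi\in C_{c}^{\infty}(\mathbb{R};H)$ with $\spt\varphi\subseteq\mathbb{R}_{<a}$, I would express the distributional pairing through Proposition \ref{prop:distributions} and the $\rho$-independence of the pairing from Proposition \ref{prop:independence}, then estimate $\langle F_{\rho}(u)-F_{\rho}(v),\varphi\rangle$ using the unified Lipschitz constant $\sup_{\rho\geq\nu}|F_{\rho}|_{\mathrm{Lip}}<\infty$ together with the exponential weight. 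The decisive mechanism is that $\|u-v\|_{\rho,\beta}$ and $\|\varphi\|_{-\rho,-(\beta-\alpha)}$ scale in opposite directions as $\rho\to\infty$ when the supports of $u-v$ and $\varphi$ are separated by the gap around $a$: one factor contains $\e^{-\rho t}$ integrated over $t\geq a$, the other an $\e^{\rho t}$ integrated over $t<a$, so the product of the two weight-dependent bounds decays like $\e^{\rho(\text{negative})}$. Letting $\rho\to\infty$ forces the pairing to vanish, which is exactly the assertion that $\spt(F_{\rho}(u)-F_{\rho}(v))\subseteq\mathbb{R}_{\geq a}$, using here that the pairing is $\rho$-independent so the value computed at a fixed $\rho$ equals the limit.

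To assemble the proof I would therefore proceed in the order: (1) reduce to the support claim for $F_{\rho}$ via unitarity and Proposition \ref{prop:causality_fractional}; (2) fix a test function $\varphi$ supported in $\mathbb{R}_{<a}$ and write the pairing $\langle F_{\rho}(u)-F_{\rho}(v),\varphi\rangle$, invoking $\rho$-independence of $\langle\cdot,\cdot\rangle$ so that this single number may be evaluated for arbitrarily large $\rho$; (3) bound it by $|F_{\mu}|_{\mathrm{Lip}}\,\|u-v\|_{\mu,\beta}\,\|\varphi\|_{-\mu,-(\beta-\alpha)}$ for every $\mu\geq\nu$, splitting the norms as weighted integrals over the disjoint regions $[a,\infty)$ and $(-\infty,a)$; and (4) let $\mu\to\infty$ to make the weighted product vanish because of the support gap, concluding that the pairing is zero and hence the support lies in $\mathbb{R}_{\geq a}$. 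The care needed is in getting the weight bookkeeping in step (3) right and in justifying that the Lipschitz extension's action transports to the distributional pairing (which is where Remark \ref{rem:eveLip}(a) guarantees consistency across different $\rho$); this $\rho\to\infty$ separation-of-supports argument is the crux and mirrors the structure of \cite[Theorem 4.5]{kalauch2014hilbert}.
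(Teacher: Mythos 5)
Your proposal is correct and takes essentially the same route as the paper's proof: establish the claim first for test functions via the $\mu\to\infty$ estimate on the pairing $\langle\cdot,\varphi\rangle$ with $\spt\varphi\subseteq\mathbb{R}_{<a}$ (using the uniform Lipschitz bound, the pairing formula of Proposition \ref{prop:distributions}, the support transport of Proposition \ref{prop:causality_fractional}, and the $\rho$-independence of the distributional pairing), and then extend to general $u,v$ by approximating $u-v$ through test functions supported in $\mathbb{R}_{\geq a}$ via Lemma \ref{lem:fracsupport}. One caution: the $\mu\to\infty$ estimate in your assembly step (3) must be run on the test-function approximants $\chi_n+\psi_n,\chi_n$ rather than on $u,v$ directly, since a general $u\in H_{\rho}^{\beta}(\mathbb{R};H)$ need not belong to $H_{\mu}^{\beta}(\mathbb{R};H)$ for $\mu>\rho$ (so $F_\mu(u)$ and $\|u-v\|_{\mu,\beta}$ are undefined) --- exactly the repair your second paragraph already supplies, with the limit passage then handled by continuity of the pairing.
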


\begin{proof}
First of all, we shall show the result for $u,v\in C_{c}^{\infty}(\mathbb{R};H)$.
So, let $u,v\in C_{c}^{\infty}(\mathbb{R};H)$ with $\spt(u-v)\subseteq\mathbb{R}_{\geq a}.$
Take $\varphi\in C_{c}^{\infty}(\mathbb{R};H)$ with $\spt\varphi\subseteq\mathbb{R}_{<a}.$
Let $\mu \geq \rho$.
Then $F_\rho(u) = F_\mu(u)$ and
\begin{align*}
        \left<\partial_{0,\rho}^{-\alpha}(F_\rho(u)-F_\rho(v)), \varphi\right> &= \left<\partial_{0,\mu}^{-\alpha}(F_\mu(u)-F_\mu(v)), \varphi\right>
        \\
        &= \left<\partial_{0,\mu}^{-\alpha}(F_\mu(u)-F_\mu(v)), \partial_{0, \mu}^{-\beta}\e^{2\mu m}\sigma_{-1}\partial_{0,\mu}^{-\beta}\sigma_{-1} \varphi\right>_{\mu,\beta}
        \\
        &= \left<F_\mu(u)-F_\mu(v), \partial_{0, \mu}^{-(\beta-\alpha)}\e^{2\mu m}\sigma_{-1}\partial_{0,\mu}^{-\beta}\sigma_{-1} \varphi\right>_{\mu,\beta-\alpha}
        \\
        &\leq \norm{F_\mu(u)-F_\mu(v)}_{\mu, \beta-\alpha} \norm{\partial_{0, \mu}^{-(\beta-\alpha)}\e^{2\mu m}\sigma_{-1}\partial_{0,\mu}^{-\beta}\sigma_{-1} \varphi}_{\mu, \beta-\alpha}
        \\
        &\leq \abs{F_\mu}_\mathrm{lip} \norm{u-v}_{\mu, \beta} \norm{\partial_{0,\mu}^{-\beta}\sigma_{-1} \varphi}_{\mu, 0}
\end{align*}
where we have used that $\partial_{0, \mu}^{-(\beta-\alpha)}\e^{2\mu m}\sigma_{-1}: H_\mu^0(\R; H) \to H_\mu^{\beta-\alpha}(\R; H)$ is unitary and $\varphi \in H_{-\mu}^{-\beta}(\R; H)$.
According to Proposition \ref{prop:causality_fractional} we have
that $\spt\partial_{0,\mu}^{-\beta}\sigma_{-1}\varphi\subseteq\mathbb{R}_{>-a}$
and hence, we compute
\[
\|\partial_{0,\mu}^{-\beta}\sigma_{-1}\varphi\|_{\mu,0}^{2}=\intop_{-a}^{\infty}\left\|\left(\partial_{0,\mu}^{-\beta}\sigma_{-1}\varphi\right)(t)\right\|_{H}^{2}\e^{-2\mu t}\text{ d}t=\intop_{0}^{\infty}\left\|\left(\partial_{0,\mu}^{-\beta}\sigma_{-1}\varphi\right)(t-a)\right\|_{H}^{2}\e^{-2\mu t}\text{ d}t\,\e^{2\mu a}.
\]
On the other hand
\begin{align*}
\|u-v\|_{\mu,\beta}^{2} & =\|\partial_{0,\mu}^{\beta}(u-v)\|_{\mu,0}^{2}\\
 & =\intop_{a}^{\infty}\|\partial_{0,\mu}^{\beta}(u-v)(t)\|_{H}^{2}\e^{-2\mu t}\text{ d}t\\
 & =\intop_{0}^{\infty}\left\|\partial_{0,\mu}^{\beta}(u-v)(t+a)\right\|_{H}^{2}\e^{-2\mu t}\text{ d}t\e^{-2\mu a}
\end{align*}
and consequently,
\begin{align*}
 & |F_{\mu}|_{\mathrm{Lip}}\|u-v\|_{\mu,\beta}\|\partial_{0,\mu}^{-\beta}\sigma_{-1}\varphi\|_{-\mu,0}\\
 & =|F_{\mu}|_{\mathrm{Lip}}\left(\intop_{0}^{\infty}\left\|\partial_{0,\mu}^{\beta}(u-v)(t+a)\right\|_{H}^{2}\e^{-2\mu t}\text{ d}t\right)^{\frac{1}{2}}\intop_{0}^{\infty}\left\|\left(\partial_{0,\mu}^{-\beta}\sigma_{-1}\varphi\right)(t-a)\right\|_{H}^{2}\e^{-2\mu t}\text{ d}t\to0\quad(\mu\to\infty),
\end{align*}
by dominated convergence. Summarizing, we have shown that $\spt(\partial_{0,\rho}^{-\alpha} F_\rho(u)-\partial_{0,\rho}^{-\alpha}F_\rho(v))\subseteq \R_{\geq a}$ for $u,v\in C_c^\infty(\R;H)$ satisfying $\spt(u-v)\subseteq \R_{\geq a}$.

Before we conclude the proof, we show that if $(w_{n})_{n\in\mathbb{N}}$
is a convergent sequence in $H_{\rho}^{\beta}(\mathbb{R};H)$ with
$\spt w_{n}\subseteq\mathbb{R}_{\geq a}$ for each $n\in\mathbb{N}$,
then its limit $w$ also satisfies $\spt w\subseteq\mathbb{R}_{\geq a}.$
For doing so, let $\varphi\in C_{c}^{\infty}(\mathbb{R};H)$ with
$\spt\varphi\subseteq\mathbb{R}_{<a}.$
Then
\[
\left<w, \varphi\right> = \left<w, \partial_{0,\rho}^{-\beta}\e^{2\rho m} \sigma_{-1}\partial_{0,\rho}^{-\beta}\sigma_{-1}\varphi\right>_{\rho, \beta}
= \lim_{n\to\infty} \left<w_n, \partial_{0,\rho}^{-\beta}\e^{2\rho m} \sigma_{-1}\partial_{0,\rho}^{-\beta}\sigma_{-1}\varphi\right>_{\rho, \beta}
= \lim_{n\to\infty} \left<w_n, \varphi\right> = 0.
\]
Finally, let $u,v\in H_{\rho}^{\beta}(\mathbb{R};H)$ with $\spt (u-v)\subseteq\mathbb{R}_{\geq a}$
According to Lemma \ref{lem:fracsupport} there is a sequence $(\varphi_n)_{n \in \N} \in C_c^\infty(\R; H)^\N$ with $\spt \varphi_n \subseteq \R_{\geq a}$
and $\varphi_n \to u-v$ in $H_\rho^\alpha(\R; H)$ as $n \to \infty$.
Let $(v_n)_{n\in\N}\in C_c^\infty(\R; H)$ with $v_n \to v$ in $H_\rho^\alpha(\R; H)$ as $n \to \infty$.
We set $u_n \coloneqq \varphi_n + v_n$.
Then $u_n \to u$ in $H_\rho^\alpha(\R; H)$ and $\spt(u_n-v_n)\subseteq\R_{\geq a}$.
By the already proved result for $C_{c}^{\infty}(\mathbb{R};H)$,
we infer that $\spt\left(\partial_{0,\rho}^{-\alpha}F_{\rho}(u_n)-\partial_{0,\rho}^{-\alpha}F_{\rho}(v_n)\right)\subseteq\mathbb{R}_{\geq a}$
for all $n\in\mathbb{N}$. Thus, letting $n\to\infty$, we obtain
$\spt\left(\partial_{0,\rho}^{-\alpha}F_{\rho}(u)-\partial_{0,\rho}^{-\alpha}F_{\rho}(v)\right)\subseteq\mathbb{R}_{\geq a}$, which shows the claim.
\end{proof}

Finally, we prove that our solution is independent of the particular
choice of the parameter $\rho>\nu$ in Theorem \ref{thm:sol_theory_contraction}.
The precise statement is as follows.

\begin{prop}
Let the assumptions of Theorem \ref{thm:sol_theory_contraction} be
satisfied and $\nu$ be chosen according to Theorem \ref{thm:sol_theory_contraction}.
Let $\tilde{\mu},\mu>\nu$ and $u_{\tilde{\mu}}\in H_{\tilde{\mu}}^{\beta}(\mathbb{R};H),\,u_{\mu}\in H_{\mu}^{\beta}(\mathbb{R};H)$
satisfying
\[
\partial_{0,\tilde{\mu}}^{\alpha}u_{\tilde{\mu}}=F_{\tilde{\mu}}(u_{\tilde{\mu}})\text{ and }\partial_{0,\mu}^{\alpha}u_{\mu}=F_{\mu}(u_{\mu}).
\]
Then $u_{\tilde{\mu}}=u_{\mu}$ as distributions in the sense of Proposition \ref{prop:distributions}.
\end{prop}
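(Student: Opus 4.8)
The plan is to avoid comparing $u_{\tilde\mu}$ and $u_\mu$ directly---they live in the different spaces $H_{\tilde\mu}^\beta(\R;H)$ and $H_\mu^\beta(\R;H)$, between which there is no obvious inclusion---and instead to realise both of them as the distributional limit of one and the same sequence of Picard iterates. Concretely, set $u^{(0)}\coloneqq 0$ and define, for $\rho\in\{\tilde\mu,\mu\}$, the iterates $u_\rho^{(k+1)}\coloneqq\partial_{0,\rho}^{-\alpha}F_\rho(u_\rho^{(k)})$, which are well defined since $F_\rho$ maps $H_\rho^\beta(\R;H)$ into $H_\rho^{\beta-\alpha}(\R;H)$ and $\partial_{0,\rho}^{-\alpha}$ maps back into $H_\rho^\beta(\R;H)$ (Lemma \ref{lem:Sobolev_chain_fractional}). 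By the Banach fixed point theorem applied to the strict contraction $\partial_{0,\rho}^{-\alpha}F_\rho$ from the proof of Theorem \ref{thm:sol_theory_contraction}, one has $u_\rho^{(k)}\to u_\rho$ in $H_\rho^\beta(\R;H)$ as $k\to\infty$. Since the roles of $\tilde\mu$ and $\mu$ are symmetric, no normalisation $\tilde\mu<\mu$ is needed.

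The heart of the matter is the claim that $u_{\tilde\mu}^{(k)}$ and $u_\mu^{(k)}$ coincide as distributions for every $k\in\N$, which I would prove by induction. The base case is trivial since $u^{(0)}=0$. For the inductive step, assume $u_{\tilde\mu}^{(k)}$ and $u_\mu^{(k)}$ represent the same distribution. Because $F$ is eventually $(\beta,\beta-\alpha)$-Lipschitz continuous, Remark \ref{rem:eveLip}(a) yields that $F_{\tilde\mu}(u_{\tilde\mu}^{(k)})\in H_{\tilde\mu}^{\beta-\alpha}(\R;H)$ and $F_\mu(u_\mu^{(k)})\in H_\mu^{\beta-\alpha}(\R;H)$ are again the same distribution. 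It then remains to see that applying the fractional integral $\partial_{0,\rho}^{-\alpha}$ preserves distributional identity across the two weights, i.e.\ that $\partial_{0,\tilde\mu}^{-\alpha}F_{\tilde\mu}(u_{\tilde\mu}^{(k)})$ and $\partial_{0,\mu}^{-\alpha}F_\mu(u_\mu^{(k)})$ coincide as distributions, which is exactly $u_{\tilde\mu}^{(k+1)}=u_\mu^{(k+1)}$.

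For this last point I would isolate the following $\rho$-independence lemma, extracted from Proposition \ref{prop:independence}: if $g_1\in H_{\tilde\mu}^\sigma(\R;H)$ and $g_2\in H_\mu^\sigma(\R;H)$ are equal as distributions, then for every $\tau\in\R$ so are $\partial_{0,\tilde\mu}^\tau g_1\in H_{\tilde\mu}^{\sigma-\tau}(\R;H)$ and $\partial_{0,\mu}^\tau g_2\in H_\mu^{\sigma-\tau}(\R;H)$. Indeed, by Proposition \ref{prop:independence} the hypothesis is equivalent to the $L^1_{\mathrm{loc}}$-identity $\partial_{0,\tilde\mu}^\sigma g_1=\partial_{0,\mu}^\sigma g_2$; applying the same proposition with Sobolev index $\sigma-\tau$ and using the composition rule $\partial_{0,\rho}^{\sigma-\tau}\partial_{0,\rho}^\tau=\partial_{0,\rho}^\sigma$ (valid since $\rho>0$ keeps $\i m+\rho$ in the right half-plane, so the fractional powers compose without branching) shows that the desired conclusion is equivalent to the very same identity. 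Choosing $\sigma=\beta-\alpha$ and $\tau=-\alpha$ gives precisely what the inductive step requires.

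Finally I would pass to the limit using that the distributional pairing is continuous with respect to $\norm{\cdot}_{\rho,\beta}$, since $\abs{\langle h,\varphi\rangle}\leq\norm{h}_{\rho,\beta}\norm{\varphi}_{-\rho,-\beta}$ as in the proof of Proposition \ref{prop:char_distributions}. Hence for fixed $\varphi\in C_c^\infty(\R;H)$ one obtains $\langle u_{\tilde\mu},\varphi\rangle=\lim_k\langle u_{\tilde\mu}^{(k)},\varphi\rangle=\lim_k\langle u_\mu^{(k)},\varphi\rangle=\langle u_\mu,\varphi\rangle$, so that $u_{\tilde\mu}=u_\mu$ as distributions. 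The step I expect to be the main obstacle is the $\rho$-independence lemma for fractional integration above; everything else is either contraction-mapping bookkeeping or a direct appeal to Remark \ref{rem:eveLip}(a). I note that this route does not explicitly invoke the causality Theorem \ref{thm:causality}; a more geometric alternative would instead use causality to localise in time and match the two solutions on each half-line $\R_{<a}$, but the iteration argument sketched here seems more economical.
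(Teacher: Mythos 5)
Your proof is correct, and it takes a genuinely different route from the paper's. You realise both solutions as limits of a single sequence of Picard iterates and propagate distributional equality through the iteration; the two ingredients you rely on --- Remark \ref{rem:eveLip}(a) for the nonlinearity and your cross-weight lemma for $\partial_{0,\rho}^{\tau}$ --- are both available, the latter because Proposition \ref{prop:independence} reduces equality of distributions at index $\sigma$ (resp.\ $\sigma-\tau$) to one and the same $L^1_{\mathrm{loc}}$-identity, once one has the semigroup property $\partial_{0,\rho}^{\sigma-\tau}\partial_{0,\rho}^{\tau}=\partial_{0,\rho}^{\sigma}$ on the extrapolation scale; this holds since $(\i\xi+\rho)^{a}(\i\xi+\rho)^{b}=(\i\xi+\rho)^{a+b}$ for the principal branch on the right half-plane and passes to the unitary extensions by checking it on the dense set $\mathcal{L}_{\rho}^{\ast}[C_c^\infty(\R;H)]$ --- a short verification you should make explicit, since the paper never states the composition rule for the extended operators. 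The limit passage via $\abs{\langle h,\varphi\rangle}\leq\norm{h}_{\rho,\beta}\norm{\varphi}_{-\rho,-\beta}$ is exactly the first estimate in the proof of Proposition \ref{prop:char_distributions}. The paper argues quite differently: it substitutes $\tilde F(v)=F(\partial_{0,\rho}^{-\beta}v)$ to reduce to the case $\beta=0$, fixes $a\in\R$, and uses the causality Theorem \ref{thm:causality} to show that the truncation $\chi_{\mathbb{R}_{\leq a}}v_{\tilde\mu}$ of $v_{\tilde\mu}=\partial_{0,\tilde\mu}^{\beta}u_{\tilde\mu}$ is a fixed point of the strict contraction $\chi_{\mathbb{R}_{\leq a}}\partial_{0,\mu}^{\beta-\alpha}\tilde F_{\mu}$, exploiting that a function supported in $\mathbb{R}_{\leq a}$ lying in $L^2_{\tilde\mu}$ also lies in $L^2_{\mu}$ for $\mu<\tilde\mu$ (whence the paper's normalisation $\mu<\tilde\mu$, which your symmetric argument indeed renders unnecessary); uniqueness of fixed points then yields $\chi_{\mathbb{R}_{\leq a}}v_{\tilde\mu}=\chi_{\mathbb{R}_{\leq a}}v_{\mu}$ for every $a$, and Proposition \ref{prop:independence} concludes. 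What each approach buys: the paper's route recycles the causality theorem it has already established and yields the stronger localisation statement that the two solutions agree after every left truncation, reflecting the causal structure of the problem; your iteration argument bypasses causality entirely, stays at the level of Propositions \ref{prop:distributions} and \ref{prop:independence}, and isolates a reusable commutation lemma, namely that fractional differentiation of any order maps pairs that agree as distributions across different weights to pairs that again agree.
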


\begin{proof}
We note that it suffices to show $v_{\mu}\coloneqq\partial_{0,\mu}^{\beta}u_{\mu}=\partial_{0,\tilde{\mu}}^{\beta}u_{\tilde{\mu}}\eqqcolon v_{\tilde{\mu}}$
as $L^1_{\mathrm{loc}}(\mathbb{R};H)$ functions by Proposition \ref{prop:independence}.
We consider the function
\[
\tilde{F}:\dom(\tilde{F})\subseteq\bigcap_{\rho\geq\rho_{0}}H_{\rho}^{0}(\mathbb{R};H)\to\bigcap_{\rho\geq\rho_{0}}H_{\rho}^{\beta-\alpha}(\mathbb{R};H)
\]
given by
\begin{equation}
\tilde{F}(v)\coloneqq F(\partial_{0,\rho}^{-\beta}v)\quad(v\in \dom(\tilde{F}))\label{eq:F_tilde}
\end{equation}
with maximal domain $\dom(\tilde{F})=\{w\in\bigcap_{\rho\geq\rho_{0}}H_{\rho}^0(\mathbb{R};H)\,;\,\forall\rho\geq\rho_{0}:\partial_{0,\rho}^{-\beta}w\in \dom(F)\}.$
Note that the expression on the right hand side of (\ref{eq:F_tilde})
does not depend on the particular choice of $\rho\geq\rho_{0}$ by
Proposition \ref{prop:independence}.
Clearly, $\tilde{F}$ is eventually $(0,\beta-\alpha)$-contracting
(see also Remark \ref{rem:eveLip}(b)) and
\[
\tilde{F}_{\rho}=F_{\rho}(\partial_{0,\rho}^{-\beta}(\cdot))\quad(\rho\geq\rho_{0}).
\]
In particular,
\[
\partial_{0,\mu}^{\alpha-\beta}v_{\mu}=\partial_{0,\mu}^{\alpha}u_{\mu}=F_{\mu}(u_{\mu})=\tilde{F}_{\mu}(v_{\mu})
\]
and analogously
\[
\partial_{0,\tilde{\mu}}^{\alpha-\beta}v_{\tilde{\mu}}=\tilde{F}_{\tilde{\mu}}(v_{\tilde{\mu}}).
\]

Let now $a\in\mathbb{R}$ and assume without loss of generality that
$\mu<\tilde{\mu}.$
We note that $\spt(v_{\tilde \mu} - \chi_{\R_\leq a} v_{\tilde \mu}) \subseteq \chi_{\R_{\geq a}}$.
We obtain, applying Theorem \ref{thm:causality},
that
\[
\chi_{\mathbb{R}_{\leq a}}v_{\tilde{\mu}}=\text{\ensuremath{\chi}}_{\mathbb{R}_{\leq a}}\partial_{0,\tilde{\mu}}^{\beta-\alpha}\tilde{F}_{\tilde{\mu}}(v_{\tilde{\mu}})=\text{\ensuremath{\chi}}_{\mathbb{R}_{\leq a}}\partial_{0,\tilde{\mu}}^{\beta-\alpha}\tilde{F}_{\tilde{\mu}}(\chi_{\mathbb{R}_{\leq a}}v_{\tilde{\mu}}).
\]
Now, since $\chi_{\mathbb{R}_{\leq a}}v_{\tilde{\mu}}\in L_{\mu}^{2}(\mathbb{R};H)\cap L_{\tilde{\mu}}^{2}(\mathbb{R};H)$,
we infer that
\[
\chi_{\mathbb{R}_{\leq a}}v_{\tilde{\mu}}=\text{\ensuremath{\chi}}_{\mathbb{R}_{\leq a}}\partial_{0,\tilde{\mu}}^{\beta-\alpha}\tilde{F}_{\tilde{\mu}}(\chi_{\mathbb{R}_{\leq a}}v_{\tilde{\mu}})=\text{\ensuremath{\chi}}_{\mathbb{R}_{\leq a}}\partial_{0,\mu}^{\beta-\alpha}\tilde{F}_{\mu}(\chi_{\mathbb{R}_{\leq a}}v_{\tilde{\mu}}),
\]
i.e. $\chi_{\mathbb{R}_{\leq a}}v_{\tilde{\mu}}$ is a fixed point
of $\text{\ensuremath{\chi}}_{\mathbb{R}_{\leq a}}\partial_{0,\mu}^{\beta-\alpha}\tilde{F}_{\mu}$.
However, since $\chi_{\mathbb{R}_{\leq a}}v_{\mu}$ is also a fixed
point of this mapping, which is strictly contractive, we derive
\[
\chi_{\mathbb{R}_{\leq a}}v_{\tilde{\mu}}=\chi_{\mathbb{R}_{\leq a}}v_{\mu}
\]
and since $a\in\mathbb{R}$ was arbitrary, the assertion follows.
\end{proof}

\section{Riemann\textendash Liouville and Caputo differential equations revisited\label{sec:RLC2}}

In this section, we shall consider the differential equations introduced
in Section \ref{sec:RLC1} and prove their well-posedness and causality.
First of all, we gather some results ensuring the Lipschitz continuity
property needed to apply either of the well-posedness theorems presented
in the previous section.
As in Section \ref{sec:RLC1} we fix $\alpha\in(0,1]$.

\begin{prop}
\label{prop:Lips}Let $\rho_0>0$, $n\in\mathbb{N}$, $y_{0}\in\mathbb{R}^{n}$,
$f\colon\mathbb{R}_{>0}\times\mathbb{C}^{n}\to\mathbb{C}^{n}$ continuous.
Assume there exists $c\geq0$ such that for all $y_{1}, y_{2}\in\mathbb{R}^{n}, t>0$
we have
\begin{align*}
|f(t,y_{1})-f(t,y_{2})| & \leq c|y_{1}-y_{2}|.
\end{align*}
Moreover, we assume that
\[
 (t\mapsto f(t,0))\in L^2_{\rho_0}(\R_{>0};\C^n).
\]

Define $\tilde{f}\colon\mathbb{R}\times\mathbb{C}^{n}\to\mathbb{C}^{n}$
by
\[
\tilde{f}(t,y)\coloneqq \begin{cases}
                         f(t,y) & \mbox{ if } t>0,\\
                         0 &\mbox{ else}.
                        \end{cases}
\]
Then the mapping $F\colon C_{c}^{\infty}(\mathbb{R};\mathbb{C}^{n})\to C(\mathbb{R};\mathbb{C}^{n})$
given by
\[
F(\varphi)(t)\coloneqq\tilde{f}(t,\varphi(t)+y_{0})\quad(\varphi\in C_c^\infty(\R;\C^n) ,t\in\mathbb{R})
\]
is eventually $(0,0)$-Lipschitz continuous.
\end{prop}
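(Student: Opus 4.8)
The plan is to verify the two requirements of eventual $(0,0)$-Lipschitz continuity directly, with $\nu=\rho_0$. First I would record, via Lemma \ref{lem:Sobolev_chain_fractional}(a) applied to $\partial_{0,\rho}^0=\mathrm{id}$, that $H_\rho^0(\R;\C^n)=L_\rho^2(\R;\C^n)$ with $\norm{\cdot}_{\rho,0}=\norm{\cdot}_{L_\rho^2}$. This reduces the whole statement to showing that for each $\rho\geq\rho_0$ the superposition operator $F$ maps $C_c^\infty(\R;\C^n)$ into $L_\rho^2(\R;\C^n)$ and is Lipschitz continuous for $\norm{\cdot}_{L_\rho^2}$ with a constant that is bounded uniformly in $\rho$. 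Membership $C_c^\infty(\R;\C^n)\subseteq\bigcap_{\rho\geq\rho_0}H_\rho^0(\R;\C^n)$ is immediate, so $\dom(F)\supseteq C_c^\infty(\R;\C^n)$ is satisfied; the content is the two analytic estimates below.

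The key pointwise bound is $\abs{F(\varphi)(t)-F(\psi)(t)}\leq c\,\abs{\varphi(t)-\psi(t)}$ for all $t\in\R$ and all $\varphi,\psi\in C_c^\infty(\R;\C^n)$: for $t\leq 0$ both sides vanish by definition of $\tilde f$, and for $t>0$ it is the Lipschitz hypothesis on $f$. Multiplying by $\e^{-2\rho t}$ and integrating over $\R$ gives $\norm{F(\varphi)-F(\psi)}_{L_\rho^2}\leq c\,\norm{\varphi-\psi}_{L_\rho^2}$ for \emph{every} $\rho$, so the Lipschitz constant is bounded by $c$ independently of $\rho$, which is exactly what $\sup_{\rho\geq\nu}\abs{F_\rho}_{\mathrm{Lip}}<\infty$ requires. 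The one delicate point is that the hypothesis is phrased for real arguments $y_1,y_2\in\R^n$ whereas $\varphi(t)+y_0\in\C^n$; I would handle this through the convention $f(t,z)=g(t,\Re z)$ of the remark preceding the proposition, under which $\abs{f(t,z_1)-f(t,z_2)}=\abs{g(t,\Re z_1)-g(t,\Re z_2)}\leq c\abs{\Re z_1-\Re z_2}\leq c\abs{z_1-z_2}$, thereby upgrading the real estimate to the complex one with the same constant. This is the main obstacle and the only place the structural assumption on $f$ is used.

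Next I would check well-definedness into $L_\rho^2$. On $t>0$ the triangle inequality and the same Lipschitz bound (against the base point $0$) give $\abs{F(\varphi)(t)}\leq c\abs{\varphi(t)}+c\abs{y_0}+\abs{f(t,0)}$, while $F(\varphi)(t)=0$ for $t\leq 0$. I then verify that each summand lies in $L_\rho^2(\R_{>0};\C^n)$: the term $c\abs{\varphi}$ because $\varphi\in C_c^\infty(\R;\C^n)\subseteq L_\rho^2(\R;\C^n)$; the constant $c\abs{y_0}$ restricted to $\R_{>0}$ because $\int_0^\infty\e^{-2\rho t}\,\mathrm dt=\tfrac{1}{2\rho}<\infty$ precisely when $\rho>0$ (this is where $\rho\geq\rho_0>0$ enters); and $\abs{f(\cdot,0)}$ because the growth hypothesis $(t\mapsto f(t,0))\in L^2_{\rho_0}(\R_{>0};\C^n)$ combined with $\e^{-2\rho t}\leq\e^{-2\rho_0 t}$ for $t>0$, $\rho\geq\rho_0$, keeps it in $L_\rho^2(\R_{>0};\C^n)$. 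Hence $F(\varphi)\in\bigcap_{\rho\geq\rho_0}L_\rho^2(\R;\C^n)=\bigcap_{\rho\geq\rho_0}H_\rho^0(\R;\C^n)$, matching the required codomain.

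Finally, for each fixed $\rho\geq\nu=\rho_0$ the map $F\colon(C_c^\infty(\R;\C^n),\norm{\cdot}_{\rho,0})\to L_\rho^2(\R;\C^n)$ is Lipschitz with constant at most $c$, and $C_c^\infty(\R;\C^n)$ is dense in $H_\rho^0(\R;\C^n)$ by Lemma \ref{lem:testfunctions_dense}; by completeness of $H_\rho^0(\R;\C^n)$ it extends uniquely to a Lipschitz map $F_\rho\colon H_\rho^0(\R;\C^n)\to H_\rho^0(\R;\C^n)$ with $\abs{F_\rho}_{\mathrm{Lip}}\leq c$. Taking the supremum over $\rho\geq\nu$ yields $\sup_{\rho\geq\nu}\abs{F_\rho}_{\mathrm{Lip}}\leq c<\infty$, so $F$ is eventually $(0,0)$-Lipschitz continuous, which completes the argument.
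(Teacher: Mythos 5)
Your proposal is correct and follows essentially the same route as the paper's proof: the pointwise Lipschitz bound integrated against $\e^{-2\rho t}$ for the uniform Lipschitz estimate, membership of $F(\varphi)$ in $L_\rho^2$ via the base point $0$ together with $\int_0^\infty \e^{-2\rho t}\d t=\tfrac{1}{2\rho}$ and the contractive embedding $L_\rho^2(\R_{>0};\C^n)\hookrightarrow L_{\rho_0}^2(\R_{>0};\C^n)$, and the unique Lipschitz extension $F_\rho$ by density of $C_c^\infty$. In fact you are slightly more careful than the paper at one point: the paper's proof silently restricts to $\varphi,\psi\in C_c^\infty(\R;\R^n)$ even though the domain consists of $\C^n$-valued test functions, whereas you explicitly bridge the real Lipschitz hypothesis to complex arguments via the convention $f(t,z)=g(t,\Re z)$ from the remark in Section \ref{sec:RLC1} (equivalently, one assumes the Lipschitz estimate \eqref{eq:Lipschitz} for complex arguments, as in Section \ref{sec:RLC1}).
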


\begin{proof}
Let $\rho\geq \rho_0$. In order to prove that $F$ attains values in $L^2_\rho(\R;\C^n)$, we shall show $F(0)\in L_{\rho}^{2}(\mathbb{R};\mathbb{C}^{n})$ first. For this we compute
 \begin{align*}
\int_{\mathbb{R}}|F(0)(t)|^{2}\e^{-2\rho t}\text{ d}t & =\int_{\R_{>0}}|f(t,y_{0})|^{2}\e^{-2\rho t}\text{ d}t\\
 & \leq 2\left(\int_{\mathbb{R}_{>0}}|f(t,y_{0})-f(t,0)|^{2}\e^{-2\rho t}\text{ d}t+\int_{\mathbb{R}_{>0}}|f(t,0)|^{2}\e^{-2\rho t}\text{ d}t\right)\\
 &\leq 2 \left(c^2|y_0|^{2} \frac{1}{2\rho} + |f(\cdot,0)|_{L^2_{\rho_0}(\R_{>0};\C^n)}^2\right)<\infty.
 \end{align*}
Here we used that $L_\rho^2(\R_{>0};H)\hookrightarrow L_{\rho_0}^2(\R_{>0};H)$ as contraction.
Next, let $\varphi,\psi\in C_c^\infty(\R;\R^n)$. Then we obtain
\begin{align*}
\int_{\mathbb{R}}|F(\varphi)(t)-F(\psi)(t)|^{2}\e^{-2\rho t}\text{ d}t & =\int_{\mathbb{R}}|\tilde{f}(t,\varphi(t)+y_{0})-\tilde{f}(t,\psi(t)+y_{0})|^{2}\e^{-2\rho t}\text{ d}t\\
 & =\int_{\mathbb{R}_{>0}}|f(t,\varphi(t)+y_{0})-f(t,\psi(t)+y_{0})|^{2}\e^{-2\rho t}\text{ d}t\\
 &\leq \int_{\mathbb{R}_{>0}}c^{2}(|\varphi(t)-\psi(t)|)^{2}\e^{-2\rho t}\text{ d}t\leq c^{2}\|\varphi-\psi\|_{L_{\rho}^{2}}^{2}.
\end{align*}
Since $F(0)\in L^2_\rho(\R;\C^n)$, the shown estimate yields $F(\varphi)\in L^2_\rho(\R;\C^n)$ for each $\varphi\in C_c^\infty(\R;\C^n)$ as well as the eventual (0,0)-Lipschitz continuity of $F$.
\end{proof}

The next result is concerned with the well-posedness for Caputo fractional
differential equations. We shall use the characterization of the Caputo
differential equation outlined in Theorem \ref{thm:Int_Cap}.

\begin{thm}
\label{thm:wpCap}Let $y_{0}\in\mathbb{C}^{n}$. Then there is $\rho_{1}>0$
such that for all $\rho\geq\rho_{1}$ there exists a unique $y\in L_{\rho}^{2}(\mathbb{R};\mathbb{C}^{n})$
with $y-y_{0}\chi_{\R_{> 0}}\in H_{\rho}^{\alpha}(\mathbb{R};\mathbb{C}^{n})$
satisfying
\[
\partial_{0,\rho}^{\alpha}(y-y_{0}\chi_{\R_{> 0}})=\tilde{f}(\cdot,y(\cdot)).
\]
Moreover, $\spt y\subseteq\mathbb{R}_{\geq0}$.
\end{thm}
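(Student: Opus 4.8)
The plan is to reduce the theorem's equation to the abstract fixed-point framework of Theorem \ref{thm:sol_theory_contraction} and Corollary \ref{cor:sol_theory_Lipschitz} via the substitution $u\coloneqq y-y_0\chi_{\R_{>0}}$, to solve it with the already established well-posedness result, and then to read off the support statement from the causality of $\partial_{0,\rho}^{-\alpha}$. First I would reformulate the equation: writing $y=u+y_0\chi_{\R_{>0}}$, a case distinction between $t>0$ and $t\le 0$ (using $\chi_{\R_{>0}}(t)=1$ for $t>0$ and $\tilde f(t,\cdot)=0$ for $t\le 0$) gives
\[
\tilde f\big(t,u(t)+y_0\chi_{\R_{>0}}(t)\big)=\tilde f\big(t,u(t)+y_0\big)=F(u)(t)\qquad(t\in\R),
\]
with $F$ the map of Proposition \ref{prop:Lips}, so the theorem's equation becomes $\partial_{0,\rho}^\alpha u=F_\rho(u)$. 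I would also record that the Lipschitz extension $F_\rho$ of $F$ to $L_\rho^2(\R;\C^n)$ is still given by the pointwise (Nemytskii) formula $F_\rho(g)(t)=\tilde f(t,g(t)+y_0)$: the latter map is Lipschitz on $L_\rho^2(\R;\C^n)$ by the estimate in the proof of Proposition \ref{prop:Lips}, agrees with $F$ on the dense subset $C_c^\infty(\R;\C^n)$, and hence equals $F_\rho$ by uniqueness of the extension.

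Next I would invoke the solution theory. Since $F$ is eventually $(0,0)$-Lipschitz continuous by Proposition \ref{prop:Lips}, Corollary \ref{cor:sol_theory_Lipschitz} applies with $\beta=0$ and $\gamma=0\in[0,\alpha[$ (recall $\alpha\in(0,1]$, so $\alpha>0$). It produces $\rho_1\coloneqq\nu$ such that for all $\rho\ge\rho_1$ there is a unique $u_\rho\in L_\rho^2(\R;\C^n)=H_\rho^0(\R;\C^n)$ with $\partial_{0,\rho}^\alpha u_\rho=F_\rho(u_\rho)$. The required regularity is then free: from $u_\rho=\partial_{0,\rho}^{-\alpha}F_\rho(u_\rho)$, $F_\rho(u_\rho)\in L_\rho^2(\R;\C^n)$ and the unitarity of $\partial_{0,\rho}^{-\alpha}\colon L_\rho^2(\R;\C^n)\to H_\rho^\alpha(\R;\C^n)$ (Lemma \ref{lem:Sobolev_chain_fractional}) one gets $u_\rho\in H_\rho^\alpha(\R;\C^n)$. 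Putting $y\coloneqq u_\rho+y_0\chi_{\R_{>0}}$ yields a solution with $y-y_0\chi_{\R_{>0}}=u_\rho\in H_\rho^\alpha(\R;\C^n)$; conversely, any admissible $y$ gives $u=y-y_0\chi_{\R_{>0}}\in H_\rho^\alpha(\R;\C^n)\subseteq L_\rho^2(\R;\C^n)$ solving $\partial_{0,\rho}^\alpha u=F_\rho(u)$, so $u=u_\rho$ by the uniqueness in the corollary, and hence $y$ is unique.

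For the support claim I would use causality. By construction $F_\rho(g)$ vanishes a.e.\ on $\R_{\le 0}$ for every $g$ (since $\tilde f(t,\cdot)=0$ for $t\le 0$), so $\spt F_\rho(u_\rho)\subseteq\R_{\ge 0}$. Proposition \ref{prop:causality_fractional}, applied with $\beta=-\alpha$ and $a=0$ to $F_\rho(u_\rho)\in H_\rho^0(\R;\C^n)$, then gives $\spt u_\rho=\spt\partial_{0,\rho}^{-\alpha}F_\rho(u_\rho)\subseteq\R_{\ge 0}$, and therefore $\spt y\subseteq\R_{\ge 0}$ because $\spt(y_0\chi_{\R_{>0}})\subseteq\R_{\ge 0}$. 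I expect the only genuine work to be bookkeeping rather than a conceptual difficulty: confirming the Nemytskii identity $\tilde f(\cdot,y(\cdot))=F_\rho(y-y_0\chi_{\R_{>0}})$ on all of $L_\rho^2(\R;\C^n)$, and tracking the $L_\rho^2\to H_\rho^\alpha$ regularity gain so that the abstract solution lands in the space demanded by the statement; existence, uniqueness and the support property then each follow directly from the cited results once the problem is in the form $\partial_{0,\rho}^\alpha u=F_\rho(u)$.
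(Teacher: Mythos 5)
Your proposal is correct and takes essentially the same route as the paper's proof: substitute $z=y-y_{0}\chi_{\R_{>0}}$, apply Proposition \ref{prop:Lips} together with Corollary \ref{cor:sol_theory_Lipschitz} with $\beta=\gamma=0$, and deduce the support claim from $\spt F_{\rho}(z)\subseteq\R_{\geq 0}$ via Proposition \ref{prop:causality_fractional}. Your extra bookkeeping (the Nemytskii form of the extension $F_{\rho}$ and the regularity gain $z=\partial_{0,\rho}^{-\alpha}F_{\rho}(z)\in H_{\rho}^{\alpha}(\R;\C^{n})$) simply makes explicit what the paper's terse proof leaves implicit.
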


\begin{proof}
With $F$ as defined in Proposition \ref{prop:Lips}, we may apply
Corollary \ref{cor:sol_theory_Lipschitz} with $\beta=\gamma=0$ to
obtain unique existence of $z\in H_{\rho}^{\alpha}(\mathbb{R};\mathbb{C}^{n})$
such that
\[
\partial_{0,\rho}^{\alpha}z=F_{\rho}(z).
\]
Setting $y\coloneqq z+y_{0}\chi_{\R_{> 0}}$, we obtain in turn unique existence
of a solution of the desired equation. Since $\spt F_{\rho}(z)\subseteq\mathbb{R}_{\geq0}$,
we obtain with Proposition \ref{prop:causality_fractional} that $\spt z=\spt \partial_{0,\rho}^{-\alpha}F_{\rho}(z)\subseteq\mathbb{R}_{\geq0}$.
Thus, $\spt y\subseteq\mathbb{R}_{\geq0}$.
\end{proof}

We remark here that the condition $\spt y\subseteq\mathbb{R}_{\geq0}$
together with $y-y_{0}\chi_{\R_{> 0}}\in H_{\rho}^{\alpha}(\mathbb{R};\mathbb{C}^{n})$
describes, how the initial value $y_{0}$ is attained. Indeed, if
$\alpha$ is large enough (e.g. $\alpha>1/2$) so that $H_{\rho}^{\alpha}(\mathbb{R};\mathbb{C}^{n})$
is a subset of functions for which the limit at $0$ exists, then
the mentioned conditions imply
\[
0=(y-y_{0}\chi_{\R_{> 0}})(0-)=(y-y_{0}\chi_{\R_{> 0}})(0+)=y(0+)-y_{0},
\]
that is, the initial value is attained.

We conclude this section by having a look at the case of the Riemann\textendash Liouville
fractional differential equation \eqref{eq:RL}.
To this end, we note that $\chi_{\R_{>0}}y_0\in H_\rho^0(\R;H)$ for $\rho>0$ and by Example \ref{exa:disder} we have
\[
\partial_{0,\rho}\chi_{\R_{>0}}y_0 = \delta_0y_0\in H_\rho^{-1}(\R;H).
\]
We also recall the notation $g_\beta(t)\coloneqq \frac{1}{\Gamma(\beta+1)}t^\beta\chi_{\R_{>0}}$ for $\beta,t\in\R$.

\begin{prop}
Let $y_0\in\C^n$.
Assume that $C_c^\infty(\R;H)\ni\varphi\mapsto\tilde f(\cdot,\varphi(\cdot))$ is eventually $(\alpha-1,\alpha-1)$-Lipschitz continuous
and denote with $H_\rho^{\alpha-1}(\R;H)\ni y\mapsto \tilde f_\rho(\cdot,y(\cdot))$ its Lipschitz-continuous extension for some $\rho>0$.
There is $\rho_1>0$ such that for $\rho\geq\rho_1$ we have a unqiue solution $y\in H_\rho^{\alpha-1}(\R;H)$ of the equation
\[
\partial_{0,\rho}^\alpha y=y_0\delta_0+\tilde f_\rho(\cdot,y(\cdot)),\qquad y\in H_\rho^{\alpha-1}(\R;H),
\]
with $\partial_{0,\rho}^{\alpha-1}y-y_0\chi_{\R_{>0}}\in H_\rho^0(\R;H)$ and $\spt(y)\subseteq\R_{\geq 0}$.
\end{prop}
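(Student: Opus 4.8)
The plan is to absorb the singular source $y_0\delta_0$ into a shift of the unknown, thereby reducing the equation to the abstract fixed point problem solved in Corollary~\ref{cor:sol_theory_Lipschitz}. First I would make the formal computation preceding~\eqref{eq:RL} rigorous in the extrapolated scale. Fix $\rho>0$. By Example~\ref{exa:disder} we have $y_0\delta_0=\partial_{0,\rho}\chi_{\R_{>0}}y_0\in H_\rho^{-1}(\R;H)$, and by Corollary~\ref{c:poly} together with the fact that the distributional derivative of $g_\alpha$ is $g_{\alpha-1}$ (no jump contribution, since $\alpha>0$ forces $g_\alpha(0+)=0$),
\[
g_{\alpha-1}y_0=\partial_{0,\rho}g_\alpha y_0=\partial_{0,\rho}\partial_{0,\rho}^{-\alpha}\chi_{\R_{>0}}y_0=\partial_{0,\rho}^{-\alpha}\left(y_0\delta_0\right)\in H_\rho^{\alpha-1}(\R;H),
\]
which, by Proposition~\ref{prop:independence}, is one and the same distribution for every $\rho>0$. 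In particular $\partial_{0,\rho}^\alpha g_{\alpha-1}y_0=y_0\delta_0$ and $\partial_{0,\rho}^{\alpha-1}g_{\alpha-1}y_0=\partial_{0,\rho}^{-1}(y_0\delta_0)=\chi_{\R_{>0}}y_0$.

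Substituting $y=u+g_{\alpha-1}y_0$ with a new unknown $u\in H_\rho^{\alpha-1}(\R;H)$ and cancelling $y_0\delta_0=\partial_{0,\rho}^\alpha g_{\alpha-1}y_0$, the equation $\partial_{0,\rho}^\alpha y=y_0\delta_0+\tilde f_\rho(\cdot,y(\cdot))$ becomes equivalent to
\[
\partial_{0,\rho}^\alpha u=\tilde f_\rho(\cdot,u(\cdot)+g_{\alpha-1}y_0)=:G_\rho(u).
\]
Since $v\mapsto v+g_{\alpha-1}y_0$ is an affine isometry of $H_\rho^{\alpha-1}(\R;H)$ (here $g_{\alpha-1}y_0\in\bigcap_{\rho>0}H_\rho^{\alpha-1}(\R;H)$), and since by Remark~\ref{rem:eveLip}(a) the composition with $\tilde f$ does not depend on $\rho$, the map $G\colon C_c^\infty(\R;H)\ni\varphi\mapsto\tilde f_\rho(\cdot,\varphi+g_{\alpha-1}y_0)$ is again eventually $(\alpha-1,\alpha-1)$-Lipschitz continuous, with extensions $G_\rho$ inheriting the Lipschitz constants of $\tilde f$. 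Corollary~\ref{cor:sol_theory_Lipschitz}, applied with $\beta=\alpha-1$ and $\gamma=0\in[0,\alpha[$, then yields $\rho_1\coloneqq\nu$ such that for every $\rho\geq\rho_1$ there is a unique $u\in H_\rho^{\alpha-1}(\R;H)$ with $\partial_{0,\rho}^\alpha u=G_\rho(u)$. Setting $y\coloneqq u+g_{\alpha-1}y_0$ produces a solution, and as the substitution is a bijection of $H_\rho^{\alpha-1}(\R;H)$, this $y$ is the unique solution in $H_\rho^{\alpha-1}(\R;H)$.

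It remains to verify the two side conditions. For the regularity, using that $\partial_{0,\rho}^{\alpha-1}\colon H_\rho^{\alpha-1}(\R;H)\to H_\rho^0(\R;H)$ is unitary (Lemma~\ref{lem:Sobolev_chain_fractional}) and that $\partial_{0,\rho}^{\alpha-1}g_{\alpha-1}y_0=\chi_{\R_{>0}}y_0$,
\[
\partial_{0,\rho}^{\alpha-1}y-y_0\chi_{\R_{>0}}=\partial_{0,\rho}^{\alpha-1}u\in H_\rho^0(\R;H).
\]
For the support I would first show $\spt G_\rho(u)\subseteq\R_{\geq0}$: approximate $u+g_{\alpha-1}y_0$ in $H_\rho^{\alpha-1}(\R;H)$ by $\varphi_n\in C_c^\infty(\R;H)$ (Lemma~\ref{lem:testfunctions_dense}); then $G_\rho(u)=\lim_n\tilde f(\cdot,\varphi_n(\cdot))$, each term vanishing for $t\leq0$ and hence supported in $\R_{\geq0}$, and a limit of elements supported in $\R_{\geq0}$ is again supported there (as established within the proof of Theorem~\ref{thm:causality}). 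Since $u=\partial_{0,\rho}^{-\alpha}G_\rho(u)$, Proposition~\ref{prop:causality_fractional} gives $\spt u\subseteq\R_{\geq0}$; together with $\spt g_{\alpha-1}y_0=\spt\partial_{0,\rho}^{-\alpha}(y_0\delta_0)\subseteq\R_{\geq0}$ (again by Proposition~\ref{prop:causality_fractional}, as $\spt(y_0\delta_0)\subseteq\{0\}$) this yields $\spt y\subseteq\R_{\geq0}$.

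The main obstacle is the bookkeeping that turns the purely formal manipulation of~\eqref{eq:RL} into genuine identities in the extrapolated spaces $H_\rho^{\alpha-1}(\R;H)$: confirming that $g_{\alpha-1}y_0$ lies in every such space as the single distribution $\partial_{0,\rho}^{-\alpha}(y_0\delta_0)$, that $\partial_{0,\rho}^{\alpha}g_{\alpha-1}y_0=y_0\delta_0$ and $\partial_{0,\rho}^{\alpha-1}g_{\alpha-1}y_0=\chi_{\R_{>0}}y_0$ hold exactly (and $\rho$-independently), and that the support property of the nonlinearity, inherited from $\tilde f(t,\cdot)=0$ for $t\leq0$, is preserved under passage to the Lipschitz extension $\tilde f_\rho$. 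Once these are in place, the reduction to $\partial_{0,\rho}^\alpha u=G_\rho(u)$ and the Lipschitz estimate for $G$ are routine.
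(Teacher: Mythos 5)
Your proposal is correct and is essentially the paper's own proof in different variables: the paper substitutes $z=\partial_{0,\rho}^{\alpha-1}y-\chi_{\R_{>0}}y_0\in H_\rho^0(\R;H)$ and solves $\partial_{0,\rho}z=G_\rho(z)$ with $G(\varphi)=\tilde f(\cdot,\partial_{0,\rho}^{1-\alpha}\varphi+g_{\alpha-1}y_0)$, which is exactly your fixed point problem $\partial_{0,\rho}^{\alpha}u=G_\rho(u)$, $y=u+g_{\alpha-1}y_0$, transported by the unitary $\partial_{0,\rho}^{\alpha-1}:H_\rho^{\alpha-1}(\R;H)\to H_\rho^{0}(\R;H)$. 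Both arguments rest on the same ingredients — the identity $\partial_{0,\rho}^{1-\alpha}\chi_{\R_{>0}}y_0=g_{\alpha-1}y_0$ via Corollary~\ref{c:poly}, well-posedness via Corollary~\ref{cor:sol_theory_Lipschitz} (your choice $\beta=\alpha-1$, $\gamma=0$ corresponds to the paper's application at level $\beta=0$), and the support statement via Proposition~\ref{prop:causality_fractional} — so no substantive difference remains.
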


\begin{proof}
The mapping $G$ defined by
\[
G(\varphi)(t)\coloneqq \tilde f(t,\partial_{0,\rho}^{1-\alpha}\varphi(t)+g_{\alpha-1}(t)y_0), \qquad \varphi\in C_c^\infty(\R;H),\;t\in\R
\]
is eventually $(0,\alpha-1)$-Lipschitz continuous.
Indeed, this fact follows from $g_{\alpha-1}y_0\in H_\rho^{\alpha-1}(\R;H)$ and the unitarity of $\partial_{0,\rho}^{\alpha-1}:H_\rho^{\alpha-1}(\R;H)\to H_\rho^0(\R;H)$.
Let $\rho_1> 0$ be such that $\tilde f_\rho$ and therefore $G_\rho$ exist for $\rho\geq\rho_1$.
Let $\rho\geq\rho_1$.
The Riemann\textendash Liouville equation is equivalent to
\[
\partial_{0,\rho}^{\alpha-1}y - \chi_{\R_{>0}}y_0 = \partial_{0,\rho}^{-1}\tilde f_\rho(\cdot, y(\cdot)),\qquad y\in H_\rho^\alpha(\R;H).
\]
With the transformation $z=\partial_{0,\rho}^{\alpha-1}y-\chi_{\R_{>0}}y_0$
and using $\partial_{0,\rho}^{1-\alpha}\chi_{\R_{>0}}y_0=\partial_{0,\rho}g_{\alpha}y_0=g_{\alpha-1}y_0$ (cf.\ Corollary \ref{c:poly})
this equation is equivalent to
\[
\partial_{0,\rho}z=G_\rho(z),\qquad z\in H_\rho^0(\R;H).
\]
By Corollary \ref{cor:sol_theory_Lipschitz} (with $\gamma=\alpha-1$) we find a unique solution $z\in H_\rho^0(\R;H)$.
We have $\spt (\partial_{0,\rho}^{-1}G_\rho(\cdot))\subseteq \R_{\geq 0}$.
By Proposition \ref{prop:causality_fractional} $\spt z\subseteq \R_{\geq 0}$.
Hence we have a unqiue solution $y=\partial_{0,\rho}^{1-\alpha}(z-\chi_{\R_{>0}}y_0)\in H_\rho^{\alpha-1}(\R;H)$ of the Riemann-Liouville equation
with $\spt y \subseteq \R_{\geq 0}$ and $\partial_{0,\rho}^{\alpha-1}y-\chi_{\R_{>0}}y_0=z\in H_\rho^0(\R;H)$.
\end{proof}

\begin{rem}
The space $H_\rho^0(\R;H)$ is continuously embedded into $H_\rho^{\alpha-1}(\R;H)$.
Thus, the assumption that $C_c^\infty(\R;H)\ni\varphi\mapsto\tilde f(\cdot,\varphi(\cdot))$ is eventually $(\alpha-1,\alpha-1)$-Lipschitz continuous,
can be replaced by the stronger assumption that $C_c^\infty(\R;H)\ni\varphi\mapsto\tilde f(\cdot,\varphi(\cdot))$ is eventually $(\alpha-1,0)$-Lipschitz continuous,
which might be easier to compute.
\end{rem}

\bibliography{PDE}
\bibliographystyle{plain}

\end{document}